\def\ch{\mathop{\hbox{\rm ch}}\nolimits}
\def\g{\mathfrak g}
\def\fraka{\mathfrak a}
\def\k{\mathfrak k}
\def\p{\mathfrak p}
\def\R{\mathbb{R}}
\def\C{\mathbb{C}}
\def\a{\mathfrak a}
\def\ss1{\mathfrak s_{\overline 1}}
\def\hs1{\mathfrak h_{\overline 1}}
\def\Pg{\mathrm{P}}
\newcommand{\eps}{\varepsilon}
\def\G{\mathrm{G}}
\def\N{\mathrm{N}}
\def\Qg{\mathrm{Q}}
\def\K{\mathrm{K}}
\def\M{\mathrm{M}}
\def\Sg{\mathrm{S}}
\def\L{\mathrm{L}}
\def\Bbb{\mathbb}
\def\N{\mathrm{N}}
\def\A{\mathrm{A}}
\def\X{\mathrm{X}}
\def\GL{\mathrm{GL}}
\def\SL{\mathrm{SL}}
\def\SO{\mathrm{SO}}
\def\SU{\mathrm{SU}}
\def\Ug{\mathrm{U}}
\def\Dg{\mathrm{D}}
\def\Eg{\mathrm{E}}
\def\B{\mathrm{B}}
\def\Wg{\mathrm{W}}
\def \wt{\widetilde}
\def\W{\mathsf{W}}
\def\X{\mathsf{X}}
\def\Y{\mathsf{Y}}
\newcommand{\cHC}{c_{\scriptscriptstyle \mathrm{HC}}}
\def\diag{\mathop{\hbox{\rm diag}}\nolimits}
\def\Re{\mathop{\rm Re}\nolimits}
\def\Im{\mathop{\hbox{\rm Im}}\nolimits}
\def\sign{\mathop{\hbox{\rm sign}}\nolimits}
\newcommand\floor[1]{\lfloor#1\rfloor}
\def\lim{\mathop{\hbox{\rm lim}}\nolimits}
\def\Res{\mathop{\hbox{\rm Res}}\limits}
\def\Resn{\mathop{\hbox{\rm Res}}\nolimits_n \wt R}
\def\Ind{\mathop{\hbox{\rm Ind}}\nolimits}
\def\th{\mathop{\hbox{\rm th}}\nolimits}
\def\ch{\mathop{\hbox{\rm ch}}\nolimits}
\def\sh{\mathop{\hbox{\rm sh}}\nolimits}
\newcommand{\cz}{{\mathop{\mathrm c}}}
\newcommand{\sz}{{\mathop{\mathrm s}}}
\renewcommand{\l}{\lambda}
\newcommand{\lrw}{\lambda(r,w)}
\newcommand{\lzw}{\lambda(z,w)}
\newcommand{\la}{\lambda_\alpha}
\newcommand{\compl}{\mathbf{i}}
\newcommand{\inner}[2]{\langle#1,#2\rangle}
\newcommand{\rhoXtwo}{\rho_\X^2}
\newcommand{\rhoX}{\rho_\X}
\newcommand{\bxi}{\boldsymbol{\xi}}
\newcommand{\Rlog}{R_{\rm log}}
\newcommand{\Ulog}{U_{\rm log}}
\def\fonttitre{\textsf}
\newcounter{thh}
\newtheorem{thm}[thh]{\fonttitre{Theorem}}
\newtheorem{pro}[thh]{\fonttitre{Proposition}}
\newtheorem*{pro*}{\fonttitre{Proposition}}
\newtheorem{cor}[thh]{\fonttitre{Corollary}}
\newtheorem*{coro*}{\fonttitre{Corollary}}
\newtheorem{lem}[thh]{\fonttitre{Lemma}}
\theoremstyle{definition}
\newtheorem{rem}{\fonttitre{Remark}}
\newtheorem*{nota*}{\fonttitre{Notation}}
\newenvironment{prf}{\begin{proof}}{\end{proof}}
\def\muet{ \ifthenelse{\equal{a}{b}}}
\def\nn{\nonumber}
\newcommand{\thmlist}{
\renewcommand{\theenumi}{\alph{enumi}}
\renewcommand{\labelenumi}{(\theenumi)}}
\begin{document}
\thanks{The first and second author would like to thank the University of Oklahoma for hospitality and financial support.
The third author gratefully acknowledges hospitality and financial support from the Universit\'e de Lorraine and partial support from the NSA grant H98230-13-1-0205. }

\makeatletter
\title[Resonances for the Laplacian on $\SL(3,\R)/\SO(3)$]
{Resonances for  the Laplacian \\on Riemannian symmetric spaces:\\
the case of $\SL(3,\R)/\SO(3)$ }
\author{J. Hilgert}
\address{Department of Mathematics,
Paderborn University,
Warburger Str. 100,
D-33098 Paderborn,
Germany}
\email{hilgert@math.uni-paderborn.de}
\author{A. Pasquale}
\address{Universit\'e de Lorraine, Institut Elie Cartan de Lorraine, UMR CNRS 7502, Metz, F-57045, France}
\email{angela.pasquale@univ-lorraine.fr}
\author{T. Przebinda}
\address{Department of Mathematics, University of Oklahoma, Norman, OK 73019, USA}
\email{tprzebinda@ou.edu}


\date{}
\subjclass[2010]{Primary: 43A85; secondary: 58J50, 22E30}
\keywords{}

\begin{abstract}
We show that the resolvent of the Laplacian on $\SL(3,\R)/\SO(3)$ can be lifted to a meromorphic function on a Riemann surface which is a branched covering of $\mathbb C$. The poles of this function are called the resonances of the Laplacian. We determine all resonances and show that the corresponding residue operators are given by convolution with spherical functions parameterized by the resonances. The ranges of these operators are infinite dimensional irreducible $\SL(3,\R)$-representations. We determine their Langlands parameters and wave front sets. Also, we show that precisely one of these representations is unitarizable. Alternatively, they are given by the differential equations which determine the image of the Poisson transform associated with the resonance. 
\end{abstract}

\maketitle

\tableofcontents

\section*{Introduction}
\label{section:introduction}

The notion of resonance was introduced in quantum mechanics to study metastable states
of a system, that is long-lived states from which the system deviates only with sufficiently
strong disturbances. Mathematically, resonances replace discrete eigenvalues of linear operators on non-compact  domains and appear as poles of the meromorphic continuation of their resolvents.

The mathematical study of resonances initiated for Schr\"odinger operators
on $\R^n$. Later, it was extended to more geometric situations, such as the Laplacian on hyperbolic and asymptotically hyperbolic manifolds, symmetric or locally symmetric spaces, and Damek-Ricci spaces. In a typical situation, one works on a complete Riemannian manifold $\X$, for which the positive Laplacian $\Delta$ is an essentially self-adjoint operator on the Hilbert space $L^2(\X)$ of square integrable functions on $\X$. Suppose that  $\Delta$ has a continuous spectrum $[\rhoXtwo,+\infty)$, with $\rhoXtwo \geq 0$.
The resolvent $R(z)=(\Delta_\X-\rhoXtwo-z^2)^{-1}$ of the shifted Laplacian (or Helmholtz operator) $\Delta-\rhoXtwo$ is then a holomorphic function of $z$ on the upper (and on the lower) complex halfplane,
with values in the space of bounded linear operators on $L^2(\X)$. Let the resolvent act, not on the entire $L^2(\X)$, but on a dense subspace of $L^2(\X)$, for instance the space $C_c^\infty(\X)$ of compactly supported smooth functions on $\X$ or on some suitable weighted $L^2$ space. Then the map $z \mapsto R(z)$ might admit a meromorphic extension across $\R$ to a larger domain in $\C$ or to a cover of such a domain. The poles, if they exist, are the resonances of $\Delta-\rhoXtwo$. Sometimes also the name scattering poles is used, but the two concepts are not completely synonymous (see e.g. \cite{Gu05}). The basic questions concern the existence of the meromorphic extension of the resolvent, the distribution and counting properties of the resonances, the rank and interpretation of the residue operators associated with the resonances.

Let $\G$ be a connected noncompact real semisimple Lie group with finite center and let $\K$ be a maximal compact subgroup of $\G$. Then the homogeneous space $\X=\G/\K$ is a Riemannian symmetric space of the noncompact type. It is a complete Riemannian manifold with respect to its canonical $\G$-invariant Riemannian structure. The positive Laplacian is the opposite of the Laplace-Beltrami operator.  An important example of such spaces are the real hyperbolic spaces
$\mathbb H^n=\SO_0(n,1)/\SO(n)$. The resonances of the positive Laplacian on $\mathbb H^n$ have been studied by
Guillop\'e and Zworski,  \cite{GZ95}; see also \cite{Z06}. They proved that there are no resonances for $n$ even; for $n$ odd, there are resonances (which are explicitely determined) and the corresponding residue operators have finite rank.

The study of the analytic extension of the resolvent of the Laplacian for general Riemannian symmetric spaces of the noncompact type $\X=\G/\K$ was started by Mazzeo and Vasy, \cite{MV05}. The motivations were of different nature. First of all, these spaces form a natural class of complete Riemannian manifolds for which the geometric properties are well understood. Moreover, the analytic properties of their Laplace-Beltrami operator play an important role in representation theory and number theory. Furthermore, the radial component of the Laplace-Beltrami operator on a maximal flat subspace is a many-body type Hamiltonian, with the walls of Weyl chambers of the maximal flat corresponding to the collision planes. This suggested that many-body methods of geometric scattering theory could have been appropriate to this setting.
More precisely, the analysis carried out in \cite{MV05} combines microlocal techniques and an adaptation of the scattering method of complex scaling of Aguilar-Balslev-Combes, see e.g. \cite{HS96}. A different point of view, using the Helgason-Fourier analysis, was employed by A. Strohmaier, \cite{Str05}, and by Hilgert and Pasquale, \cite{HP09}. A further approach, using asymptotics of solutions the Laplacian on Damek-Ricci spaces, was employed by Miatello and Will in \cite{MW00}.

For a general Riemannian symmetric space of the noncompact type $\X=\G/\K$, Mazzeo and Vasy \cite{MV05} and Strohmaier \cite{Str05} independently proved that the resolvent
$z \mapsto R(z^2)$ of the shifted Laplacian $\Delta_\X-\rhoXtwo$ admits a holomorphic extension across $\R$. The domain of the extension depends on the parity of the real rank of the symmetric space $\X$ (i.e. the dimension of the maximal flat subspace of $\X$). This dependence on the parity of the dimension parallels the case of the Laplacian on $\R^n$ (see e.g. \cite[\S 1,6]{Mel95}).

Despite the many articles studying resonances on complete Riemannian manifolds, detailed information on the existence and nature of the resonances for the Laplacian on $\X$ is so far available only in the so-called even multiplicity and real rank one cases. The even multiplicity case corresponds to the situation in which the Lie algebra of $G$ has a unique class of Cartan subalgebras. This happens for instance when $G$ possesses a complex structure. In the even multiplicity case, the resolvent has an entire extension to a suitable covering of the complex plane, see \cite[Theorem 3.3]{Str05}.  So there are no resonances in this case. The general rank-one case was considered, with different approaches, in \cite{MW00} and \cite{HP09}: unless the symmetric space has even multiplicities (in which case there are no resonances), the
map $z \mapsto R(z^2)$ admits a meromorphic extension to $\C$ with simple poles along the negative imaginary axis. The poles are at the points $\zeta_k=|\alpha|\lambda_k$, $k\in \mathbb N$, where $|\alpha|$ is a constant depending on the normalization of the Riemannian measure and the $\lambda_k$'s range among the spectral parameters of the spherical functions $\varphi_{\lambda_k}$ on $\X$ which are matrix coefficients of finite dimensional spherical representations of $\G$. The resolvent residue operator at $\zeta_k$ is a constant multiple of the convolution operator by $\varphi_{\lambda_k}$ and its image is the space of the corresponding finite dimensional spherical representation. In particular, the rank of the residue operators is finite. See \cite[Theorem 3.8]{HP09}.

In \cite{MV04} and \cite{MV07}, Mazzeo and Vasy considered the specific case of $\X=\SL(3,\R)/\SO(3)$, to exemplify their microlocal and complex-scaling methods of analytic extension of the resolvent of the positive Laplacian. The space $\SL(3,\R)/\SO(3)$ is a symmetric space of real rank-two which can be realized as the space of symmetric 3-by-3 positive definite matrices with determinant $1$. Restricted to a maximal flat, its Laplace-Beltrami operator is a Calogero-Moser-Sutherland 3-body Hamiltonian of type II associated with the root system $\A_2$, see e.g. \cite[(3.1.14) and (3.8.3)]{Pe90}. The analysis of \cite{MV04} and \cite{MV07} left nevertheless open the basic questions on the existence and nature of resonances and resolvent residue operators.

In this paper we provide complete answers to these questions. In a first step one notices that for fixed $f\in C_c^\infty(\X)$ and $y\in\X$ the resolvent function $z\mapsto (R(z)f)(y)$ extends holomorphically to
$\C \setminus \big((-\infty,0] \cup i(-\infty, -\frac{1}{2}\rhoX]\big)$. The cut $(-\infty,0]$ leads to a logarithmic Riemann surface covering $\C \setminus \big(i(-\infty, -\frac{1}{2}\rhoX]\cup \{0\}\cup  i[\tfrac{1}{2}\rhoX,+\infty)\big)$ to which $z\mapsto(R(z)f)(y)$ can be lifted holomorphically (see Corollary~\ref{cor:logRiemann}). This narrows down the location of the resonances to the negative imaginary axis. Our main result, Theorem~\ref{thm:meroextshiftedLaplacian}, then says that for each $N\in \mathbb N$ there is an open neighborhood of $i(0,N+1)$ together with a branched cover $M_{(\gamma_N)}$ to which $z\mapsto (R(z)f)(y)$ can be lifted meromorphically. Poles can occur only above the points $-i(\mathbb N+\frac{1}{2})$, and they are of order at most one. For special $f$ and $y$ the lift may be holomorphic at some of these points. Obviously this is the case for $f=0$. The residues of the lifted functions may be calculated and they are given as convolution of $f$ with the spherical function whose spectral parameter is the resonance. Unlike the rank one case, the residue resolvent operators are not of finite rank (see Proposition~\ref{prop:eigenspaces}). On the other hand, as in the rank one case the range of each of the residue operator is a $\G$-representation which can be identified explicitly. More precisely, it is the unique irreducible subquotient of a (non-unitary) principal series whose Langlands parameter can be read off from the resonance (see Proposition~\ref{The range of the residue operator}). 
Since the unitary dual of $\SL(3,\R)$ is known, we are able to detect the unique resonance for which the corresponding representation is unitarizable. Unfortunately the spherical unitary dual of some other real rank two semisimple groups was not classified yet. Thus in these cases the question of unitarizibility of the residue representations will be more difficult. We are not aware of any place in the literature where the authors actually prove that any residue operators have infinite rank. Typically one uses  analytic Fredholm theory to show that these operators are of finite rank. The Fredholm theory is not applicable in the case we consider. Instead we use Langlands classification to show that the rank is infinite.

Alternatively, the range of the residue operator at some resonance is given by the differential equations which determine the image of the Poisson transform associated with this resonance (see Remark~\ref{rem:Poisson2}). 

In order to compute the residues, initially we tried to reduce the problem from rank two to the rank one, considered in \cite{HP09}, by pursuing a double rank-one integration and deforming the real line to an unbounded cycle in the complex plane. However this method leads to technical difficulties. Instead we decided to use polar coordinates, deforming a circle to an ellipse, see section \ref{subsection:deformationS1}, and get directly to the result. Since the Laplacian has rotational symmetry this is a natural approach, which was used in the Euclidean case to show that there are no residues on $\X=\R^2$ despite the fact that there is a simple pole at zero for $\X=\R$, see \cite{Mel95}.
These computations lead quickly to a local meromorphic extensions of the resolvent, see section \ref{subsection:meroextGn}. We glue them together in order to get an explicit global extension. This is a non-trivial process, described in sections \ref{subsection:mero-ext-F} and \ref{subsection:mero-ext-resolvent}.

\subsection*{Notation}
We shall use the standard notation $\mathbb Z$, $\mathbb N$, $\R$,  $\R^+$, $\C$ and $\C^\times$ for the integers, the nonnegative integers, the reals, the positive reals, the complex
numbers and the non-zero complex numbers, respectively. We also set $\R^-=-\R^+$. If $\X$ is a manifold, then $C^\infty(\X)$ and $C^\infty_c(\X)$ respectively denote the space of smooth functions and the space of smooth compactly supported functions on $\X$.

\section{Preliminaries}\label{section:preliminaries}
\subsection{Structure of $\X=\SL(3,\R)/\SO(3)$}
\label{subsection:structureSL3}
Let $\G=\SL(3,\R)$ be the Lie group of 3-by-3 real matrices of determinant 1. The Lie algebra $\g=\mathfrak{sl}(3,\R)$ of $\G$ consists of the 3-by-3 matrices with real coefficients and trace equal to $0$. The $\pm 1$-eigenspace decomposition of $\g$ with respect to the Cartan involution $\theta(x)=-x^t$, where $\null\cdot^t$ denote transposition, yields the Cartan decomposition $\g=\k\oplus \p$. Here $\k=\mathfrak{so}(3)$ is the Lie algebra of skew-symmetric 3-by-3 matrices and  $\p$ is the vector subspace of symmetric matrices in $\g$.

We consider $\X=\G/\K$ as a symmetric space endowed with the $\G$-invariant Riemannian
metric associated with the Cartan-Killing form of $\mathfrak{g}$. It can be realized as the space $\widetilde \X$ of the 3-by-3 symmetric positive definite matrices with determinant 1. Indeed,
$\G$ acts transitively on $\widetilde \X$, the element $g\in \G$ acting as the isometry $x\mapsto gxg^t$ of $\widetilde \X$, and $\K$ is the isotropy subgroup of the identity matrix.

Choose
$$\fraka=\{\diag(a_1,a_2,a_3); a_j \in \R, \sum_{j=1}^3 a_j=0\}$$
as a maximal abelian subalgebra in $\p$. For $j=1,2,3$ define $\varepsilon_j \in \fraka^*$ by $$\varepsilon_j(\diag(a_1,a_2,a_3))=a_j\,.$$
Then the set $\Sigma$ of (restricted) roots of $(\g,\fraka)$ is
\begin{equation}
\label{eq:SigmaSL3}
\Sigma=\{\alpha_{i,j}=\varepsilon_i-\varepsilon_j: 1\leq i \neq j \leq 3\}\,.
\end{equation}
It is a root system of type $\A_2$. All root multiplicities $m_\alpha$ are equal to $1$.
Take $\Sigma^+=\{\alpha_{i,j}: 1\leq i < j \leq 3\}$ as a set of positive roots. The corresponding system of simple roots is $\Pi=\{\alpha_{1,2}, \alpha_{2,3}\}$. The element $\rho=\frac{1}{2} \sum_{\alpha \in \Sigma^+} m_\alpha \alpha \in \mathfrak{a}^*$ is equal to
$$
\rho=\alpha_{1,2}+\alpha_{2,3}=\alpha_{1,3}\,.
$$
We denote by $\fraka^+$ the positive Weyl chamber associated with $\Sigma^+$ and by
$\fraka^*_+$ the elements in the real dual space $\fraka^*$ of $\fraka$ which are positive on
$\fraka^+$.
The Weyl group $\Wg$ of $\Sigma$ is $S_3$ acting
as the group of permutations of the three elements $\varepsilon_1, \varepsilon_2, \varepsilon_3$.

In the following, we denote by the same symbol $\inner{\cdot}{\cdot}$  the restriction of the Cartan-Killing form of $\g$ to $\fraka\times \fraka$, the dual inner product on $\fraka^*$ and their $\C$-bilinear extensions to the complexifications $\fraka_\C$ and $\fraka^*_\C$, respectively.
Explicitly, $\inner{\cdot}{\cdot}$ is given by $\inner{\varepsilon_i}{\varepsilon_j}=
6\delta_{i,j}$ for all $i,j=1,2,3$. Hence, $\inner{\alpha}{\alpha}=12$
for every $\alpha \in \Sigma$. Moreover, $\inner{\alpha_{1,2}}{\alpha_{2,3}}=-6$.

In later sections of this paper we will find it convenient to identify $\fraka^*$ with $\C$ by choosing a suitable basis.
To distinguish the resulting complex structure in $\fraka^*$ from the natural complex structure of $\fraka^*_\C$,
we shall indicate the complex units in $\fraka^*\equiv \C$ and $\fraka_\C^*$ by $i$ and $\compl$, respectively.
So $\fraka^*\equiv \C=\R +i \R$, whereas $\fraka_\C^*=\fraka^*+\compl\fraka^*$.
For $r,s\in \R$ and $\lambda,\nu\in \fraka^*$ we have $(r+is)(\lambda+\compl\nu)=(r\lambda-s\nu)+\compl(r\nu+s\lambda)\in \fraka^*_\C$.

\subsection{Spherical representations}
\label{subsection:spherical-reps}
In the following, we denote by $\varphi_\lambda$ Harish-Chandra's spherical function of spectral parameter $\lambda\in \mathfrak{a}_\C^*$; see e.g. \cite[Chapter 3]{GV} or \cite[Chapter IV] {He2}.
The group $\G_\C=\SL(3,\C)$ is the simply connected complexification of $\G$. Its Lie algebra contains  
$\fraka_\C$, which is formed by the $3$-by-$3$ complex diagonal matrices with $0$ trace.
We consider the finite dimensional holomorphic representations $\pi_\mu$ of $\G_\C$ with highest weight $\mu$ relative to $\fraka_\C$, with dominance defined by $\Sigma^+$. Let $\mu$ be the restriction to $\fraka$ of such a weight. Then $\mu\in \fraka^*$. Recall that
 $\pi_\mu$ is said to be spherical, if there exists a non-zero vector $v_\mu$  in the representation space of $\pi_\mu$ which is $\K$-fixed, i.e. so that $\pi_\mu(k)v_\mu=v_\mu$ for all $k \in \K$. According to the Cartan-Helgason's theorem, $\pi_\mu$ is spherical if and only if $\mu_\alpha\in \mathbb N$ for all $\alpha \in \Sigma^+$.
In this case, the vector $v_\mu$ is unique up to constant multiples.

Thus $\mu \in \mathfrak{a}^*_\C$ is the highest restricted weight of a finite-dimensional spherical representation of $G$ if and only
if there exists $n_1,n_2 \in \mathbb{Z}_+$ so that
$\mu =n_1w_{1,2}+n_2w_{2,3}$. Here $w_{1,2},w_{2,3} \in \mathfrak{a}^*$ are the fundamental restricted weights, which are defined
by the conditions
$$\frac{\inner{w_{i,j}}{\alpha_{k,l}}}{\inner{\alpha_{k,l}}{\alpha_{k,l}}}=\delta_{(i,k),(j,l)}$$
for $(i,j),(k,l) \in \{(1,2),(2,3)\}$. Hence
\begin{equation}
\label{eq:omega}
w_{1,2}=\tfrac{2}{3}(2\alpha_{1,2}+\alpha_{2,3})
\qquad \textrm{and} \qquad
w_{2,3}=\tfrac{2}{3}(\alpha_{1,2}+2\alpha_{2,3})\,.
\end{equation}
Observe that if $\lambda \in \fraka^*_\C$ is written as
\begin{equation}
\label{eq:lambda-omega}
\lambda=\lambda_{1,2} w_{1,2}+ \lambda_{2,3} w_{2,3}\,,
\end{equation}
then $\lambda_{i,j}=\lambda_{\alpha_{i,j}}$, where for $\alpha \in \Sigma$ the numbers
$\la$ are defined by (\ref{eq:la}). For instance, for  $\lambda=\rho$, we have
$\rho=\tfrac{1}{2}(w_{1,2}+w_{2,3})$ as
$\rho_{1,2}=\rho_{2,3}=\tfrac{1}{2}$.

Let $\Y=\Ug/\K$ be the simply connected Riemannian symmetric space of compact type which is dual to $\X=\G/\K$. Then $\Ug=\SU(3)$.
Let $\pi_\mu$ be the finite-dimensional spherical representation of $\G_\C$ of highest
restricted weight $\mu$, and let $v_\mu$ be a $\K$-fixed vector in the space of $\pi_\mu$
having norm one in the inner product $(\cdot,\cdot)$ making the restriction of $\pi_\mu$ to $\Ug$ unitary. Then the matrix coefficient $g\mapsto (\pi_\mu(g)v_\mu,v_\mu)$ is a $\K_\C$-bi-invariant holomorphic function of $g\in \G_\C$. Considered as a function on $\Y$, it is the spherical function on $\Y$ of spectral parameter $\mu$. Considered as a function on $\X$, it agrees with the spherical function $\varphi_{\mu+\rho}$.

\subsection{Eigenspace representations}
\label{subsection:eigenspace-reps}

Let $\mathbb D(\X)$ be the commutative algebra of $\G$-invariant differential operators on $\X$ and $S(\mathfrak{a}_\C)^\Wg$ the commutative algebra of $\Wg$-invariant polynomial functions on $\mathfrak{a}_\C^*$. 
The Harish-Chandra isomorphism is a  surjective isomorphism $\gamma:\mathbb{D}(\X) \to S(\mathfrak{a}_\C)^\Wg$ such that
$\gamma(\Delta)(\lambda)=\inner{\rho}{\rho}-\inner{\lambda}{\lambda}$.
See e.g. \cite[Ch. II, Theorems 4.3 and 5.18, and p. 299]{He2}.

Let $\lambda\in\mathfrak{a}_\C^*$. The joint eigenspace $\mathcal E_\lambda(\X)$ for
the algebra $\mathbb{D}(\X)$ is
\begin{equation}\label{eq:El}
\mathcal E_\lambda(\X)=\{f\in C^\infty(\X): \text{$Df=\gamma(D)(\lambda)f$
for all $D \in \mathbb D(\X)$}\}\,.
\end{equation}
See \cite[Ch. II, \S 2, no. 3 and Ch. III, \S 6]{He3}.
The group $\G$ acts on $\mathcal E_\lambda(\X)$ by the left regular representation:
\begin{equation}\label{eq:eigenrep}
[T_\lambda(g)f](x)=f(g^{-1}x)\qquad (g\in \G, \, x\in \X).
\end{equation}
Notice that $\mathcal E_\lambda(\X)=\mathcal E_{w\lambda}(\X)$
(and hence $T_\lambda=T_{w\lambda}$) for all $w\in \Wg$.
The subspace of $\K$-fixed elements in $\mathcal E_\lambda(\X)$ is 1-dimensional and spanned
by Harish-Chandra's spherical function $\varphi_\lambda$. The closed subspace
$\mathcal E_{(\lambda)}(\X)$ generated by the translates $T_\lambda(g)\varphi_\lambda$ of $\varphi_\lambda$, with $g \in \G$, is the unique closed irreducible subspace of
$\mathcal E_\lambda(\X)$. The restriction of $T_\lambda$ to $\mathcal E_{(\lambda)}(\X)$ is quasisimple and admissible. See e.g. \cite[Ch. IV, Theorem 4.5]{He2}.
Furthermore, the representation $T_\lambda$ is irreducible
if and only if $1/\Gamma_\X(\lambda)\neq 0$
where $\Gamma_\X$ is the Gamma function attached to $\X$, as in
\cite[Ch. III, \S 7, Theorem 6.2]{He3}. For $\X=\SL(3,\R)/\SO(3)$ we have
\begin{equation}
\label{eq:gammaX-SL3}
\Gamma_\X(\lambda)=
\prod_{\alpha\in \Sigma} \Gamma\big(\tfrac{3}{4}+\tfrac{\lambda_\alpha}{2}\big)
\Gamma\big(\tfrac{1}{4}+\tfrac{\lambda_\alpha}{2}\big)
=\prod_{\alpha\in \Sigma^+} \frac{2\pi^2}{\cos(\pi \lambda_\alpha)}\,.
\end{equation}
Thus $T_\lambda$ is reducible if and only if there is $\alpha \in \Sigma$ so that
$i\lambda_\alpha\in i\big(\mathbb{Z}+\frac{1}{2}\big)$.
In the present case, this is equivalent to $\lambda$ being a singularity of the Plancherel density
$[\cHC(\compl\lambda)\cHC(-\compl\lambda)]^{-1}$.
Here $\cHC(\l)$ denotes Harish-Chandra's $c$-function; see e.g. \cite[Theorem 4.7.5]{GV}. For $\SL(3,\R)$,
the Plancherel density is a meromorphic function on $\frak a_\C^*$, given by
\begin{equation}
\label{eq:Plancherel1}
[\cHC(\compl\lambda)\cHC(-\compl\lambda)]^{-1}=c_0 \prod_{\alpha \in \Sigma^+} \lambda_\alpha \th (\pi \lambda_\alpha)
\end{equation}
where $c_0$ is a normalizing constant and for $\lambda\in\fraka^*_\C$ and $\alpha\in\Sigma$
we have set
\begin{equation}
\label{eq:la}
\lambda_\alpha=\tfrac{\inner{\lambda}{\alpha}}{\inner{\alpha}{\alpha}}\,.
\end{equation}

The space $\mathcal E_{\lambda,\G}(\X)$ of $\G$-finite elements in
$\mathcal E_{\lambda}(\X)$ is a
(possibly zero) invariant subspace of $\mathcal E_{\lambda}(\X)$. Recall that it consists
of all functions $f \in \mathcal E_{\lambda}(\X)$ such that the vector space spanned by the left translates $T_\lambda(g)f$ of $f$, with $g \in \G$, is finite dimensional.

The following proposition holds for arbitrary Riemannian symmetric spaces of the noncompact type $\X=\G/\K$, where $\G$ is a noncompact connected semisimple Lie group with finite center and $\K$ a maximal compact subgroup of $\G$. It characterizes the $\mathcal E_{\lambda,\G}(\X)$, when non-zero, as finite-dimensional spherical representations of $\G$.

\begin{pro}
\label{prop:sphrep}
$\mathcal E_{\lambda,\G}(\X)\neq \{0\}$ if and only if there is $w\in \Wg$ so that
\begin{equation}
\label{eq:sphrep}
-(w\lambda)_\alpha-\rho_\alpha \in \mathbb{Z}^+ \qquad\text{for all $\alpha \in \Sigma^+$}\,.
\end{equation}
In this case, $\mathcal E_{\lambda,\G}(\X)$ is finite dimensional and
irreducible under $\G$. It is the finite-dimensional spherical
representation of highest restricted weight $-w\lambda-\rho$. All
finite dimensional spherical representations of $\G$ arise in this
fashion.
\end{pro}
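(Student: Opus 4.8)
The plan is to characterize $\mathcal E_{\lambda,\G}(\X)$ by combining the known structure of $\mathcal E_\lambda(\X)$ recalled above with the classification of finite-dimensional spherical representations via the Cartan--Helgason theorem. First I would observe that $\mathcal E_{\lambda,\G}(\X)$, being $\G$-invariant and consisting of $\G$-finite functions, is a $\G$-submodule of $\mathcal E_\lambda(\X)$ all of whose vectors generate finite-dimensional subspaces; since $T_\lambda$ restricted to the unique irreducible closed subspace $\mathcal E_{(\lambda)}(\X)$ is admissible and quasisimple, any nonzero $\G$-finite invariant subspace must actually be finite-dimensional (a nonzero admissible module with a $\G$-finite vector that generates the whole module is finite-dimensional, and $\mathcal E_{\lambda,\G}(\X)$ contains $\mathcal E_{(\lambda),\G}(\X)$ as soon as it is nonzero because $\mathcal E_{(\lambda)}(\X)$ is the unique irreducible subspace and $\varphi_\lambda$ is $\K$-fixed). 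So the first real step is: $\mathcal E_{\lambda,\G}(\X)\neq\{0\}$ forces it to be finite-dimensional and, because it is a submodule of $\mathcal E_\lambda(\X)$ which has a one-dimensional $\K$-fixed space spanned by $\varphi_\lambda$, it is irreducible and spherical.

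Second, I would identify the highest weight. A finite-dimensional $\G$-module $V$ that embeds $\G$-equivariantly into $C^\infty(\X)$ is automatically spherical (its $\K$-fixed line is the image of $\varphi_\lambda$), and by the Cartan--Helgason theorem such a $V=V_\Lambda$ has a highest restricted weight $\Lambda$ with $\Lambda_\alpha\in\mathbb N$ for all $\alpha\in\Sigma^+$. The matrix coefficient $\varphi$ of $V_\Lambda$ against its normalized $\K$-fixed vector is, as recalled in Section~\ref{subsection:spherical-reps}, the spherical function $\varphi_{\Lambda+\rho}$. Since this matrix coefficient lies in $\mathcal E_\lambda(\X)$, the eigenvalue condition $\gamma(D)(\Lambda+\rho)=\gamma(D)(\lambda)$ for all $D\in\mathbb D(\X)$ together with the fact that $\gamma$ has image $S(\fraka_\C)^\Wg$ forces $\Lambda+\rho\in\Wg\lambda$, i.e. $\Lambda+\rho=-w\lambda$ for some $w\in\Wg$ (the sign is a bookkeeping matter fixed by the convention $\mathcal E_\lambda=\mathcal E_{w\lambda}$; choosing $-w\lambda$ to land in the dominant chamber). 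Thus $\Lambda=-w\lambda-\rho$, and dominance $\Lambda_\alpha\in\mathbb N$ reads exactly as $-(w\lambda)_\alpha-\rho_\alpha\in\mathbb Z^+$ for all $\alpha\in\Sigma^+$, which is \eqref{eq:sphrep}.

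Conversely, if \eqref{eq:sphrep} holds for some $w$, set $\Lambda=-w\lambda-\rho$; then $\Lambda$ is dominant integral and satisfies the Cartan--Helgason integrality condition, so there is a finite-dimensional spherical representation $\pi_\Lambda$ of $\G$ (obtained by restricting the holomorphic $\pi_\Lambda$ of $\G_\C$). Its matrix coefficient $g\mapsto(\pi_\Lambda(g)v_\Lambda,v_\Lambda)$, viewed on $\X$, equals $\varphi_{\Lambda+\rho}=\varphi_{-w\lambda}=\varphi_\lambda$ and lies in $\mathcal E_\lambda(\X)$; more generally all matrix coefficients $g\mapsto(\pi_\Lambda(g)v_\Lambda,u)$, $u$ ranging over the representation space, descend to $\K$-invariant functions on $\X$ spanning a finite-dimensional $\G$-submodule of $\mathcal E_\lambda(\X)$ isomorphic to $V_\Lambda$. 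Hence $\mathcal E_{\lambda,\G}(\X)\neq\{0\}$, and by the first step it is exactly this $V_\Lambda$. That also gives the last sentence: every finite-dimensional spherical representation $V_\Lambda$ arises, by taking $\lambda=-(\Lambda+\rho)$ (or any $\Wg$-translate), since then $\mathcal E_{\lambda,\G}(\X)\cong V_\Lambda$.

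The main obstacle I anticipate is the bookkeeping around signs, the action of $\Wg$, and the precise normalization linking $\mathcal E_\lambda(\X)$, the Harish-Chandra parameter, and the highest-weight parameter --- i.e. making sure that ``matrix coefficient of $\pi_\Lambda$ equals $\varphi_{\Lambda+\rho}$'' combines correctly with ``$\mathcal E_\lambda=\mathcal E_{w\lambda}$'' to yield the stated $-w\lambda-\rho$ rather than some other chamber representative. The conceptual content --- a finite-dimensional submodule of $C^\infty(\X)$ must be irreducible spherical, and sphericality plus the eigenvalue equation pins down the highest weight up to $\Wg$ --- is standard (this is essentially Helgason's classification of $\G$-finite joint eigenfunctions), so beyond the normalization check the argument is routine.
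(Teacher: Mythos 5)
The paper's own ``proof'' is a single citation to Helgason, \cite[Ch.~II, Proposition 4.16]{He3}, so you are attempting to reconstruct a proof the paper merely references. Your overall strategy --- show that a nonzero $\mathcal E_{\lambda,\G}(\X)$ is forced to be a finite-dimensional irreducible spherical submodule because the $\K$-fixed subspace of $\mathcal E_\lambda(\X)$ is one-dimensional, then pin down its highest weight via the identification of the spherical function of a finite-dimensional spherical representation with $\varphi_{\mu+\rho}$ --- is indeed the standard route and is essentially what Helgason does. The first half of your argument is sound: any finite-dimensional $\G$-submodule of $\mathcal E_\lambda(\X)$ is completely reducible, each irreducible constituent is spherical (being a finite-dimensional rep inside $C^\infty(\G/\K)$), and since $\varphi_\lambda$ spans the only $\K$-fixed line, there can be at most one such constituent; hence $\mathcal E_{\lambda,\G}(\X)$, if nonzero, is a single finite-dimensional irreducible spherical representation.

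The genuine gap is in the highest-weight bookkeeping, exactly where you flag uncertainty, and it cannot be dismissed as mere ``sign convention.'' You write that the eigenvalue condition forces $\Lambda+\rho\in\Wg\lambda$, ``i.e.\ $\Lambda+\rho=-w\lambda$ for some $w\in\Wg$.'' That inference is false for $\SL(3,\R)$: the Weyl group of $\A_2$ does not contain $-\mathrm{id}$, so $\Lambda+\rho\in\Wg\lambda$ does not give $\Lambda+\rho\in -\Wg\lambda$. The missing ingredient is the duality twist. If you realize a highest-weight module $\pi_\mu$ inside $C^\infty(\X)$ via $u\mapsto\big(\pi_\mu(\cdot)v_\mu,u\big)$ (with the $\Ug$-unitary inner product, as in Section~\ref{subsection:spherical-reps}), then the $\G$-action transported onto the parameter $u$ is $\pi_\mu\circ\theta\cong\check\pi_\mu$, not $\pi_\mu$; so the module you actually land in $\mathcal E_\lambda(\X)$ has highest restricted weight $-w_0\mu$, with $w_0$ the longest Weyl element, while its $\K$-fixed function is $\varphi_{\mu+\rho}$ with $\mu+\rho\in\Wg\lambda$. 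Putting these together, and using $w_0\rho=-\rho$, one gets highest weight $-w_0\mu=-w_0(w'\lambda-\rho)=-w_0w'\lambda-\rho=-w\lambda-\rho$ with $w=w_0w'$. This is precisely the contragredient remark the paper makes explicitly right after the Proposition (Remark~\ref{rem:hw}), and it is what replaces your nonexistent ``$-1\in\Wg$.'' Your converse direction has the same issue: the span of the matrix coefficients $g\mapsto(\pi_\Lambda(g)v_\Lambda,u)$ is isomorphic to $\check V_\Lambda$, not $V_\Lambda$, so to produce the module of highest weight $-w\lambda-\rho$ inside $\mathcal E_\lambda(\X)$ you must start from $\pi_\mu$ with $\mu=-w_0(-w\lambda-\rho)=w_0w\lambda-\rho$. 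Once this twist is inserted the argument closes; without it, the chain from ``$\Lambda+\rho\in\Wg\lambda$'' to ``$\Lambda=-w\lambda-\rho$'' does not go through.
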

\begin{proof}
This is \cite[Ch. II, Proposition 4.16]{He3}.
\end{proof}

\begin{rem}\label{rem:hw}
Notice that (\ref{eq:sphrep}) implies that $\lambda \in \fraka^*$.
Recall that a finite-dimensional representation $\pi$ is spherical if and only if its contregradient $\check{\pi}$ is spherical. Moreover, if $\pi$ has highest restricted weight $-w\lambda-\rho$, then $\check{\pi}$ has highest restricted weight $-w_0(-w\lambda-\rho)=w_0w\lambda-\rho$, where $w_0$ denotes the longest Weyl group element.
See e.g. \cite[Corollary 4.13 and Theorem 4.12]{He3}.
Thus $\mathcal E_{\lambda,\G}\neq \{0\}$ if and only if $\mathcal E_{-w_0\lambda,\G}\neq \{0\}$, which is equivalent to the existence of $w\in \Wg$ so that $(w\lambda)_\alpha-\rho_\alpha \in \mathbb{N}$ for all $\alpha \in \Sigma^+$. For a dominant $\lambda \in \fraka^*$, this condition is therefore equivalent to  $\lambda_\alpha-\rho_\alpha \in \mathbb{N}$ for all $\alpha \in \Sigma^+$.  In turn, for $\G=\SL(3,\R)$, the latter condition is equivalent to $\lambda_{\alpha}\in \mathbb{N} +\frac{1}{2}$ for $\alpha\in\{\alpha_{1,2},\alpha_{2,3}\}$
when $\lambda \in \fraka^*$ is dominant.
\end{rem}

\subsection{The resolvent of the Laplacian}
\label{subsection:resolvent}

Let $\Delta$ be the nonnegative Laplacian on $\X$ for the $\G$-invariant Riemannian structure associated
with the Cartan-Killing form of $\mathfrak{g}=\mathfrak{sl}(3,\R)$.  Then
$\Delta$ is an essentially self-adjoint $\G$-invariant differential operator on $L^2(\X)$ and
the spectrum of $\Delta$ is the half-line $[\rhoXtwo,+\infty)$ where
$\rhoXtwo=\inner{\rho}{\rho}=12$. The resolvent $R_\Delta(u)=(\Delta-u)^{-1}$
is therefore a holomorphic function of  $u \in \C \setminus [\rhoXtwo, +\infty)$.
It is actually convenient to consider the change of variable $z^2=u-\rhoXtwo$,
and reduce the study of $R_\Delta$ to that of
\begin{equation}
\label{eq:defR}
R(z)=(\Delta-\rhoXtwo-z^2)^{-1}=R_\Delta(\rhoXtwo+z^2).
\end{equation}
Here we are choosing the single-valued holomorphic branch of the square root function on $\C \setminus [0,+\infty)$ mapping  $-1$ to $i$. (Later the notation $\sqrt{\cdot}$ will be reserved to a different choice of holomorphic branch; see \eqref{square root 1}.) Hence $R$ is a holomorphic function of $z \in \C^+$, where $\C^+:=\{w \in \C: \Im w > 0\}$ is the upper half-space, with values in the space of bounded linear operators on $L^2(\X)$.

An explicit formula for $R(z)$ can be obtained by means of the Plancherel Theorem for the Helgason-Fourier transform; see e.g. \cite[Section 1.4]{HP09}. It follows, in particular, that for every $z \in \C^+$ and $f \in C_c^\infty(\X)$, the distribution $R(z)f$ is in fact a $C^\infty$ function on $\X$, given by the formula
\begin{equation}
\label{eq:resolventz}
[R(z)f](y)=
\frac{1}{|\Wg|}\,\int_{\mathfrak{a}^*} \frac{1}{\inner{\lambda}{\lambda}-z^2}\; (f \times \varphi_{\compl\lambda})(y) \; \frac{d\lambda}{\cHC(\compl\lambda)\cHC(-\compl\lambda)}\,,
\qquad y \in \X\,.
\end{equation}
The symbol $\times$ in (\ref{eq:resolventz}) denotes the convolution on $\X$. Recall that for
sufficiently regular functions $f_1, f_2:\X \to \C$, the convolution $f_1\times f_2$ is the function on $\X$ defined by $(f_1 \times f_2) \circ \pi= (f_1 \circ \pi) * (f_2 \circ \pi)$. Here $\pi:\G\to \X=\G/\K$ is the natural projection and $*$ denotes the convolution product of functions on $\G$.

The convolution $(f\times \varphi_{\compl\lambda})(y)$ can be described in terms of the Helgason-Fourier transform of $f$.
In fact, for $ f\in C_c^\infty(\X)$, $\lambda \in \mathfrak{a}_\C^*$ and $y=g\cdot o\in \X$, we have by
\cite[Ch. III, Lemma 1.2 and proof of Theorem 1.3]{He3} that
\begin{equation}\label{eq:conv-with-phil}
(f \times \varphi_\lambda)(y)=\int_\B \mathcal F f(\lambda,b)\,e_{\lambda,b}(y)\; db
\end{equation}
is the spherical Fourier transform of the $\K$-invariant function $f_y\in C_c^\infty(\X)$
given by
$$f_y(g_1)=\int_\K f(gkg_1 \cdot o) \; dk\,.$$
It follows by the Paley-Wiener Theorem that for every fixed $y\in \X$ the function $(f \times \varphi_{\compl\lambda})(y)$ is a Weyl-group-invariant entire function of $\lambda\in\mathfrak{a}_\C^*$ and there exists a constant $R \geq 0$ (depending on $y$ and on the size of the support of $f$) so that for each
$N \in \mathbb N$
\begin{equation}\label{eq:exptype-conv}
\sup_{\lambda\in\mathfrak{a}_\C^*} e^{-R|{\rm Im} \lambda|}(1+|\lambda|)^N
|(f\times \varphi_{\compl\lambda})(y)| < \infty\,.
\end{equation}

\section{An initial holomorphic extension of the resolvent}
We keep the notation introduced in the previous section. Recall in particular that the bottom of the spectrum of the Laplacian
is $\rhoXtwo=\inner{\rho}{\rho}=\inner{\alpha}{\alpha}$ for all $\alpha \in \Sigma$. In the following we denote by $\rhoX$ the positive square root of
$\rhoXtwo$.

Let $\{e_1,e_2\}$ be the standard basis in $\R^2$. We identify $\fraka^*$ with $\R^2$
by
\begin{equation}
\label{eq:alpha-e}
\alpha_{1,2}=e_1
\qquad \textrm{and} \qquad
\alpha_{2,3}=-\tfrac{1}{2} e_1 +\tfrac{\sqrt{3}}{2}e_2\,.
\end{equation}
In this way the inner product $\inner{\cdot}{\cdot}$ is $\rhoXtwo$-times the usual Euclidean inner product on $\R^2$.
One deduces from (\ref{eq:omega}) that
\begin{eqnarray}
\label{eq:omega-e}
&&w_{1,2}=e_1+ \tfrac{\sqrt{3}}{3} e_2
\qquad \textrm{and} \qquad
w_{2,3}=2\tfrac{\sqrt{3}}{3}e_2\,.
\end{eqnarray}
The above identification yields for
\begin{equation}
\label{eq:lambda-ident}
\lambda=\lambda_{1,2} w_{1,2}+ \lambda_{2,3} w_{2,3}
= x_1e_1+x_2e_2= x_1+ix_2\in \fraka^*\equiv \R^2 \equiv \C
\end{equation}
the relations:
\begin{eqnarray}
\label{eq:lambda-x}
\l_{1,2}&=&x_1\,, \notag \\
\l_{2,3}&=&-\tfrac{1}{2} x_1+ \tfrac{\sqrt{3}}{2} x_2\,,\\
\l_{1,3}&=&\l_{1,2}+\l_{2,3}=\tfrac{1}{2} x_1+ \tfrac{\sqrt{3}}{2} x_2\,.
\notag
\end{eqnarray}
Set
\begin{equation}
\label{eq:bxi}
\bxi=e^{i\frac{\pi}{3}}\,.
\end{equation}
In the above coordinates, one can rewrite (\ref{eq:Plancherel1}) as
\begin{eqnarray}
\label{eq:c-coords-x}
[\cHC(\compl\lambda)\cHC(-\compl\lambda)]^{-1}&=&c_0\,x_1 \th(\pi x_1)  (x_1\tfrac{1}{2}+x_2\tfrac{\sqrt{3}}{2}) \th(\pi (x_1\tfrac{1}{2}+x_2\tfrac{\sqrt{3}}{2}))\, \notag\\
&&\qquad \qquad \qquad \times (-x_1\tfrac{1}{2}+x_2\tfrac{\sqrt{3}}{2})\,\th(\pi (-x_1\tfrac{1}{2}+x_2\tfrac{\sqrt{3}}{2})) \notag\\
&=&c_0 \, \prod_{u\in\{1,\bxi,\bxi^2\}} \Re(\l u) \th\big(\pi \Re(\l u)\big)\,.
\end{eqnarray}

Let $\Sg^1=\{w \in \C; |w|=1\}$ be the unit circle and let
$\sigma$ denote the rotation-invariant probability measure on $\Sg^1$:
$$\int_{\Sg^1} f(w) \, d\sigma(w)=\frac{1}{2\pi} \int_0^{2\pi} f(e^{i\theta})\, d\theta=
\frac{1}{2\pi} \int_{\Sg^1} f(w) \; \frac{dw}{iw}\,.$$
Introduce polar coordinates in $\mathfrak{a}^*\equiv \R^2$: for $\lambda$
as in (\ref{eq:lambda-ident}), set
\begin{equation}
\label{eq:lpolar}
\lambda=\lrw=rw
\quad
\text{where} \quad
\begin{cases}
&r^2=x_1^2+x_2^2\,, \\
&w=\frac{\lambda}
{|\lambda|} \in \Sg^1\,.
\end{cases}
\end{equation}
Hence $\inner{\lrw}{\lrw}=\rhoXtwo r^2$.
Moreover, (\ref{eq:resolventz}) becomes
\begin{equation}
\label{eq:Rcalz-polar}
[R(z)f](y)=
\frac{1}{|\Wg|}\,\int_0^\infty \frac{1}{\rhoXtwo r^2-z^2} \; F(f,r,y) r \; dr
\end{equation}
where
\begin{eqnarray}
F(f,y,r)&=&\int_{\Sg^1} (f\times \varphi_{\compl \lrw})(y) \;
\frac{d\sigma(w)}{\cHC(\compl \lrw)\cHC(-\compl \lrw)}  \notag \\
\label{eq:F}
&=&\frac{1}{2\pi} \int_{\Sg^1} (f\times \varphi_{\compl \lrw})(y) \;
\frac{1}{\cHC(\compl \lrw)\cHC(-\compl \lrw)} \; \frac{dw}{iw}\,.
\end{eqnarray}
In the following, we will omit the dependence on $f\in C^\infty_c(\X)$ and $y\in \X$ from the notation,
and write $F(r)$ instead of $F(f,r,y)$, and $R(z)$ instead of $[R(z)f](y)$.

Set
\begin{eqnarray}
\label{eq:cos}
\cz(z)&=&\frac{z+z^{-1}}{2} \qquad  \qquad (z\in\C^\times)\,,\\
\label{eq:sin}
\sz(z)&=&\frac{z-z^{-1}}{2}  \qquad  \qquad (z\in\C^\times)\,.
\end{eqnarray}
If $z \in \Sg^1$, then  $\cz(z)=\Re z$ and $\sz(z)=i\Im z$.
Hence (\ref{eq:c-coords-x}) gives for $r>0$ and $w\in \Sg^1$:
\begin{eqnarray}
\label{eq:c-polar-coords}
[\cHC(\compl \lrw)\cHC(-\compl \lrw)]^{-1}&=  \notag
&c_0 \, \prod_{u\in\{1,\bxi,\bxi^2\}} \Re(ruw) \th\big(\pi \Re(ruw)\big)\\
&=&c_0 r^3 \, \prod_{u\in\{1,\bxi,\bxi^2\}} \cz(uw) \th\big(\pi r \cz(uw)\big)\,.
\end{eqnarray}

We extend the function $(r,w)\in [0,+\infty) \times \Sg^1 \to \l(r,w)=rw\in \fraka^*=\R^2$ by $\C$ linearity in $r$ to a map
from $\C \times \Sg^1$ to $\fraka_\C^*$ by setting
\begin{equation}
\label{eq:lzw}
\lzw=\l(x,w)+\compl \l(y,w) \qquad (z=x+iy\in \C, w\in \Sg^1)\,,
\end{equation}
where $\l(x,w)=xw=\frac{x}{|x|}\l(|x|,w)$ for $(x,w)\in (-\infty,0)\times \Sg^1$.

\begin{pro}
\label{pro:holoextRF}
Let $f\in C^\infty_c(\X)$ and $y\in \X$ be fixed.
\begin{enumerate}
\thmlist
\item The function $F(z)$, defined by (\ref{eq:F}) for $z=r>0$, extends to an even
holomorphic function  of  $z\in \C\setminus i((-\infty, -\frac{1}{2}]\cup[\frac{1}{2},\infty))$.
Moreover, $F(z)/z^6$ is bounded near $z=0$.
\item
Fix $x_0>0$ and $y_0>0$. Let
\begin{eqnarray*}
Q&=&\{z\in \C; \Re z>x_0,\ y_0> \Im z \geq 0\}\\
U&=&Q\cup  \{z\in \C; \Im z <0\}
\end{eqnarray*}
Then there is a holomorphic function $H:U\to \C$ such that
\begin{equation}
\label{eq:holoextRF}
R(\rhoX z)=H(z)+\frac{\pi i}{|W| \rhoXtwo}\, F(z) \qquad (z\in Q).
\end{equation}
In particular, the function $R(z)=[R(z)f](y)$ extends holomorphically from $\C^+$ to
$\C \setminus \big((-\infty,0] \cup i(-\infty, -\frac{1}{2}\rhoX]\big)$.
\end{enumerate}
\end{pro}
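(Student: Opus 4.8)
The plan is to prove (a) first, then derive (b) by a contour‐deformation argument applied to the integral representation \eqref{eq:Rcalz-polar}. For part (a), the key is the expression \eqref{eq:F} for $F(r)$ together with the explicit formula \eqref{eq:c-polar-coords} for the Plancherel density in polar coordinates. I would substitute \eqref{eq:c-polar-coords} into \eqref{eq:F} to get
\begin{equation*}
F(r)=\frac{c_0 r^3}{2\pi}\int_{\Sg^1}(f\times\varphi_{\compl\lrw})(y)\,\prod_{u\in\{1,\bxi,\bxi^2\}}\cz(uw)\,\th\!\big(\pi r\cz(uw)\big)\,\frac{dw}{iw}\,.
\end{equation*}
One then extends $r$ to a complex variable $z$ using \eqref{eq:lzw}: by the Paley--Wiener bound \eqref{eq:exptype-conv}, $z\mapsto(f\times\varphi_{\compl\lzw})(y)$ is entire in $z$ for each fixed $w\in\Sg^1$, and the products $\cz(uw)$ are independent of $z$. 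The only source of singularities is the three factors $\th(\pi z\cz(uw))$, which have poles where $\pi z\cz(uw)\in i\pi(\Ze+\tfrac12)$, i.e. where $z\cz(uw)\in i(\Ze+\tfrac12)$. For $w\in\Sg^1$ we have $\cz(uw)=\Re(uw)\in[-1,1]$, so $|z\cz(uw)|\le|z|$, hence for $|z|<\tfrac12$ none of these poles is hit and the integrand is holomorphic and jointly continuous on $\{|z|<\tfrac12\}\times\Sg^1$; differentiation under the integral sign (justified by the uniform bounds) shows $F$ is holomorphic there, with the factor $z^3$ explaining the vanishing — but one needs to extend the domain much further. The point is that $\th$ is odd, so the product of the three $\th$-factors is odd under $w\mapsto -w$ while the rest of the integrand over $\Sg^1$ transforms so that the whole integral is even in $z$; combined with the $z^3$ prefactor and a symmetrization over $u$, the singularities at $z\cz(uw)\in i(\Ze+\tfrac12)$ that would occur for a fixed direction $w$ get smeared out by the $w$-integration and only reappear when $z\in i\R$ with $|z|\ge\tfrac12$ (the worst case $\cz(uw)=\pm1$, attained at $w=\pm1,\pm\bxi,\pm\bxi^2$). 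Making this precise — showing that the $\Sg^1$-integral of a function with moving poles is holomorphic off the imaginary axis slit $i((-\infty,-\tfrac12]\cup[\tfrac12,\infty))$ — is the technical heart of (a); I expect it requires either deforming the circle $\Sg^1$ slightly (an idea the authors flag in the introduction, "deforming a circle to an ellipse") to keep the poles off the contour, or a residue bookkeeping. The evenness is immediate from $\lambda(-z,w)=\lambda(z,-w)$ and the $\sigma$-invariance under $w\mapsto-w$, using that the integrand picks up a sign from the three odd $\th$'s; and $F(z)/z^6$ bounded near $0$ follows since $F$ is even, holomorphic near $0$, and visibly divisible by $z^3$, hence (being even) by $z^4$, and one more inspection of the Taylor expansion of the $\th$-factors (each contributes $z\cz(uw)+O(z^3)$ and $\sum_u\cz(uw)=0$... actually $\prod_u\cz(uw)$ times the leading $z^3$ is the natural order, so divisibility by $z^6$ needs the next symmetry) — I would pin this down by noting $\sum_{u}\Re(uw)=0$ forces extra cancellation in the $z^3$ and $z^5$ Taylor coefficients of the $w$-integral.

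For part (b), start from \eqref{eq:Rcalz-polar}:
\begin{equation*}
R(\rhoX z)=\frac{1}{|\Wg|}\int_0^\infty\frac{F(r)\,r\,dr}{\rhoXtwo r^2-\rhoXtwo z^2}=\frac{1}{|\Wg|\rhoXtwo}\int_0^\infty\frac{F(r)\,r\,dr}{r^2-z^2}\,,
\end{equation*}
valid a priori for $z\in\C^+$. Using that $F$ is even, rewrite the $(0,\infty)$ integral as $\tfrac12\int_{-\infty}^{\infty}$ and split $\frac{r}{r^2-z^2}=\tfrac12\big(\frac{1}{r-z}+\frac{1}{r+z}\big)$, so that $R(\rhoX z)$ becomes (a constant times) a Cauchy-type integral $\int_{\R}\frac{F(r)}{r-z}\,dr$ of the holomorphic, rapidly decreasing (from \eqref{eq:exptype-conv}) function $F$. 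For $z$ in the region $Q$ (so $\Re z>x_0>0$, $0\le\Im z<y_0$, just above or on the positive real axis), deform the contour of integration from $\R$ to a contour $\Gamma$ that dips into the lower half-plane near $r=\Re z$ — using part (a), $F$ is holomorphic on a neighborhood of $\R\cup\{$lower half-plane$\}$, in fact on all of $\C$ minus the two imaginary slits, which is disjoint from a suitable $\Gamma$. The difference between integrating over $\R$ and over $\Gamma$ is, by the residue theorem, exactly $\mp 2\pi i\,\mathrm{Res}_{r=z}\frac{F(r)}{r-z}=\mp2\pi i\,F(z)$ when $z$ crosses $\Gamma$; the integral over $\Gamma$ itself is holomorphic in $z$ throughout $U=Q\cup\{\Im z<0\}$ because $\Gamma$ can be chosen to avoid $U$ — and one checks that for $z\in\{\Im z<0\}$ no residue is crossed, while for $z\in Q$ the residue $F(z)$ appears. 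Collecting constants ($\tfrac12\cdot\tfrac12$ from the two splittings, $\pi i$ from the residue, $\tfrac{1}{|\Wg|\rhoXtwo}$ out front) reproduces the claimed identity $R(\rhoX z)=H(z)+\frac{\pi i}{|\Wg|\rhoXtwo}F(z)$ with $H(z):=\frac{1}{2|\Wg|\rhoXtwo}\int_\Gamma\frac{F(r)}{r-z}\,dr$ holomorphic on $U$. Finally, the stated holomorphic extension of $z\mapsto[R(z)f](y)$ from $\C^+$ to $\C\setminus\big((-\infty,0]\cup i(-\infty,-\tfrac12\rhoX]\big)$ follows by combining: $H$ is holomorphic on $U$; $F(z)$ is holomorphic on $\C\setminus i((-\infty,-\tfrac12]\cup[\tfrac12,\infty))$ by (a); the rescaling $z\mapsto\rhoX z$ turns the slit $i[\tfrac12,\infty)$ into $i[\tfrac12\rhoX,\infty)$ which is acceptable since $R(z)$ was only being extended past $\R$ and the square-root branch cut $(-\infty,0]$ in the $u=z^2$ plane accounts for the remaining excluded set; one then patches the two descriptions on $\C^+\cap Q$ where both are valid and invokes uniqueness of analytic continuation.

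The main obstacle I anticipate is the rigorous proof in (a) that the $\Sg^1$-integral of a function whose poles move with $z$ stays holomorphic off the imaginary axis: one must either produce an explicit homotopy of the contour $\Sg^1$ avoiding the poles $\{w\in\Sg^1: z\cz(uw)\in i(\Ze+\tfrac12)\ \text{for some}\ u\}$ as $z$ varies over the claimed domain — which is delicate because as $\Im z$ grows the forbidden set on $\Sg^1$ is nonempty and one needs the deformation to the "ellipse" to be uniform on compacta — or carefully track residues picked up when these poles cross $\Sg^1$ and check they cancel pairwise by the oddness of $\th$ under $w\mapsto-w$. A secondary, more bookkeeping-type obstacle is getting every normalizing constant right and checking that the two branch conventions for the square root (the one fixed in \eqref{eq:defR} and the one to be fixed later in \eqref{square root 1}) are consistently handled so that the excluded set is exactly $(-\infty,0]\cup i(-\infty,-\tfrac12\rhoX]$ and not something slightly larger or smaller.
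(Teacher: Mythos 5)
There is a genuine gap in part (a), and part (b) contains two concrete errors.

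For part~(a), you have overcomplicated the argument by imagining ``moving poles'' off the imaginary axis that must be handled by deforming $\Sg^1$ to an ellipse or by residue cancellation. In fact no such difficulty exists: for every $w\in\Sg^1$ and $u\in\{1,\bxi,\bxi^2\}$ the number $\cz(uw)=\Re(uw)$ is \emph{real} with $|\cz(uw)|\le 1$, so the poles of $z\mapsto\th(\pi z\,\cz(uw))$ all lie at $z=i\big(k+\tfrac12\big)/\cz(uw)$ with $k\in\Ze$, which is always a point of $i\big((-\infty,-\tfrac12]\cup[\tfrac12,\infty)\big)$ (and there are no poles at all if $\cz(uw)=0$). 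Hence the integrand \eqref{eq:integrandF} is holomorphic in $z$ on the slitted plane $\C\setminus i\big((-\infty,-\tfrac12]\cup[\tfrac12,\infty)\big)$ for \emph{every} fixed $w\in\Sg^1$, and holomorphy of $F$ there is immediate (Morera or differentiation under the integral). The ellipse deformation you refer to from the introduction belongs to the later meromorphic continuation \emph{across} the slit (Section~\ref{subsection:deformationS1}), not to part (a). Your claim that the $z^6$ bound needs $\sum_{u}\Re(uw)=0$ is also wrong: that sum is not zero (since $1+\bxi+\bxi^2=1+i\sqrt3\neq 0$), and it is not needed, because the explicit factor $z^3$ in \eqref{eq:integrandF} together with $\th(\pi z c)=\pi z c+O(z^3)$ for each of the three $\th$ factors already gives the vanishing to order $6$.

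For part~(b), your overall idea (deform the $r$-contour and pick up a residue $F(z)$) is the same as the paper's, but the execution is wrong in two places. First, the rewriting of $\int_0^\infty$ as $\tfrac12\int_{-\infty}^\infty$ ``using that $F$ is even'' fails: since $F$ is even, the integrand $F(r)\,r/(r^2-z^2)$ is an \emph{odd} function of $r$, so that $\tfrac12\int_{-\infty}^\infty$ would vanish identically. The paper instead keeps the two pieces of the partial fraction decomposition separate, $\int_0^\infty\frac{F(r)}{r-z}\,dr$ and $\int_0^\infty\frac{F(r)}{r+z}\,dr$, and deforms only the first. Second, the direction of the deformation you describe is inconsistent: you say $\Gamma$ ``dips into the lower half-plane,'' yet you also (correctly) require $\Gamma$ to avoid $U=Q\cup\{\Im z<0\}$, and these two statements contradict each other. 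As in the paper, the contour $\gamma_+$ must be pushed \emph{up}, into the first quadrant above $z_0=x_0+iy_0$, so that for $z\in Q$ (which lies between $[0,\infty)$ and $\gamma_+$) the pole $r=z$ is crossed and contributes $+2\pi i F(z)$, while $\int_{\gamma_+}\frac{F(\zeta)}{\zeta-z}\,d\zeta$ remains holomorphic for all $z$ below $\gamma_+$ -- i.e.\ on $U$. The growth control on $\gamma_+$ at infinity uses \eqref{eq:exptype-conv} together with boundedness of $\th$ away from its poles, which you mention only in passing.
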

\begin{prf}
According to (\ref{eq:c-polar-coords}), the integrand in the definition (\ref{eq:F}) of $F$ is
\begin{equation}
\label{eq:integrandF}
\frac{1}{2\pi i w} (f\times \varphi_{\compl \lzw})(y) z^3 \, \prod_{u\in\{1,\bxi,\bxi^2\}} \cz(uw) \th\big(\pi z \cz(uw)\big)\,,
\end{equation}
which is a meromorphic function of $z\in \C$. Since $\th(\pi v)$ is holomorphic on $v\in \C\setminus i(\Bbb Z+\frac{1}{2})$ and $|\cz(w)|\leq 1$ for all $w \in \Sg^1$, the function $F$ extends to a holomorphic function of $z\in \C\setminus i((-\infty, -\frac{1}{2}]\cup[\frac{1}{2},+\infty))$. The change of variable given by the antipodal map $w\mapsto -w$ of $\Sg^1$ shows that $F$ is even. The last part of (a) follows from (\ref{eq:integrandF}) and the Dominated Convergence Theorem.

Since,
\[
\frac{2r}{r^2-z^2}=\frac{1}{r-z}+\frac{1}{r+z},
\]
we have
\begin{eqnarray} \label{original integral}
2 |W| \rhoXtwo \, R(\rhoX z)&=&2 \int_0^\infty \frac{F(r)}{r^2-z^2}\,r\,dr \notag\\
&=&\int_0^\infty \frac{F(r)}{r-z}\,dr+\int_0^\infty \frac{F(r)}{r+z}\,dr.
\end{eqnarray}
The functions
\begin{equation}\label{first integral}
\C\setminus [0,+\infty)\ni z\mapsto\int_0^\infty \frac{F(r)}{r-z}\,dr\in\C
\end{equation}
\begin{equation}\label{second integral}
\C\setminus (-\infty,0]\ni z\mapsto\int_0^\infty \frac{F(r)}{r+z}\,dr\in\C
\end{equation}
are holomorphic.

Let $z_0=x_0+iy_0$. Fix in the first quadrant a curve $\gamma_+$ that starts at $0$, goes to the right and up above $z_0$ and then becomes parallel to the positive real line and goes to infinity. We suppose that
$Q$ is interior to the region bounded by $\gamma_+$ and the positive real line.

Let $M, m$ be fixed positive numbers. Notice that, by (\ref{eq:exptype-conv}), the function $(f \times \varphi_{\compl \lzw})(y)$ is rapidly decreasing in the strip $\{z\in \C; |\Im z|\leq M\}$. Moreover $\th (\pi v)$ is bounded in the domain $\{v\in \C; |v|\leq 1/3\} \cup
\{v \in \C; \Re v \geq m\}$.
Let $z\in Q$. Then, by Cauchy's theorem,
\begin{equation}\label{first integral computed}
\int_0^\infty \frac{F(r)}{r-z}\,dr=\int_{\gamma_+} \frac{F(\zeta)}{\zeta-z}\,d\zeta + 2\pi i F(z)\,.
\end{equation}
The term
\[
\int_{\gamma_+} \frac{F(\zeta)}{\zeta-z}\,d\zeta
\]
has a holomorphic extension to the forth and third quadrant through the positive reals and the negative imaginary numbers. Moreover,
\[
\int_0^\infty \frac{F(r)}{r^2-z^2}\,r\,dr=\frac{1}{2}\left(\int_0^\infty \frac{F(r)}{r+z}\,dr+\int_{\gamma_+} \frac{F(\zeta)}{\zeta-z}\,d\zeta\right)+\pi i F(z).
\]
\end{prf}

\begin{rem}
Write the elements of $\Sg^1$ as $w=e^{i\theta}$. So $\cz(w)=\cos\theta$.
Since
\[
 \cos\theta \th(\tfrac{\pi i}{2}\cos\theta)=-\pi \cos\theta\, \frac{\sin(\frac{\pi}{2}\cos\theta)}{\cos(\frac{\pi}{2}\cos\theta)}
\]
and since for $\theta$ near $0$ the above expression is approximately equal to
\[
-\pi \frac{1}{\frac{\pi}{2}(1-\cos\theta)}
=- \frac{2}{(1-\cos\theta)}
\approx -\frac{4}{\theta^2},
\]
the function (\ref{eq:integrandF})  is not absolutely integrable if $z=\tfrac{i}{2}$. Therefore it is not clear if  $F$ extends from the right or left half plane to a continuous function on any set  containing parts of the imaginary axis larger than $(-\frac{i}{2},\frac{i}{2})$.
\end{rem}

The extension of $R(z)$ across $(-\infty,0]$ can be deduced from the results of Mazzeo and Vasy \cite{MV05} and of Strohmaier \cite{Str05}. They require an additional change of variables.

Let $\log$ denote the holomorphic branch of the logarithm defined on
$\C \setminus ]-\infty, 0]$ by $\log 1=0$. Set $\tau=\log z$.
It gives a biholomorphism between $\C^+$ and the strip $S_{0,\pi}:=\{\tau \in \C: 0<\Im \tau < \pi\}$.
Let $f\in C^\infty_c(\X)$ and $y\in \X$ be fixed. Define
\begin{equation}
\label{eq:R-log}
[\Rlog\null(\tau)f](y)=[R(\rhoX e^{\tau})f](y)=
\frac{1}{|W|}\,\int_{\fraka^*} \frac{1}{\inner{\l}{\l}-\rhoXtwo e^{2\tau}}\; (f \times \varphi_{\compl\l})(y) \; \frac{d\l}{\cHC(\compl\l)\cHC(-\compl\l)}\,.
\end{equation}
Polar coordinates in $\fraka^*$ give now
\begin{equation}
\label{eq:R-log-polar}
[\Rlog(\tau)f](y)=
\frac{1}{\rhoXtwo |\Wg|}\,\int_{-\infty}^{+\infty} \frac{1}{e^{2t}-e^{2\tau}}\; F(e^t) e^{2t} \; dt\,,
\end{equation}
where $F(r)=F(f,y,r)$ is as in (\ref{eq:F}).
The evenness of the function $F$ becomes $i\pi$-periodicity of the function $F(e^t)$.

As for the functions $F$ and $R$, we will omit the dependence of $f$ and $y$ in the notation
and write $\Rlog(\tau)$ instead of $[\Rlog(\tau)f](y)$.

\begin{pro}
\label{pro:holoextRFnegaxis}
Let $f\in C^\infty_c(\X)$ and $y\in \X$ be fixed. The function $\Rlog(\tau)=[\Rlog(\tau)f](y)$ extends
holomorphically from $S_{0,\pi}$ to the open set
$$\Ulog=\C \setminus \bigcup_{n \in \mathbb Z\setminus \{0\}}
\Big(i\pi \big(n+\tfrac{1}{2}\big)+[\log(\tfrac{1}{2}),+\infty)\Big)$$
and satisfies the identity:
\begin{equation}
\Rlog(\tau+i\pi)=\Rlog(\tau)+\frac{i\pi}{\rhoXtwo |\Wg|}\, F(e^\tau)\qquad (\tau \in \Ulog)\,.
\end{equation}
Consequently,
\begin{multline}
[R(\rhoX z e^{i\pi})f](y)=[R(\rhoX z)f](y)+\frac{i\pi}{\rhoXtwo |\Wg|}\, F(z)\\ \qquad (z \in \C \setminus \big(i(-\infty, -\tfrac{1}{2}]\cup i[\tfrac{1}{2},+\infty) \cup (-\infty,0]\big))\,.
\end{multline}
\end{pro}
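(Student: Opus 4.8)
The plan is to analyze the integral representation \eqref{eq:R-log-polar} directly, exploiting the $i\pi$-periodicity of $t\mapsto F(e^t)$ that follows from the evenness of $F$ established in Proposition~\ref{pro:holoextRF}(a). First I would record where the integrand in \eqref{eq:R-log-polar} is singular: the factor $(e^{2t}-e^{2\tau})^{-1}$ has poles exactly when $e^{2t}=e^{2\tau}$, i.e. when $\tau\in t+i\pi\mathbb Z$ for real $t$, while $F(e^t)$ itself is holomorphic in $t$ on the strip $|\Im t|<\pi/2$ minus the obstruction set coming from Proposition~\ref{pro:holoextRF}(a); concretely $F(z)$ is holomorphic for $z\notin i\big((-\infty,-\tfrac12]\cup[\tfrac12,+\infty)\big)$, so $F(e^t)$ is holomorphic for $e^t\notin i\big((-\infty,-\tfrac12]\cup[\tfrac12,+\infty)\big)$, which translates into $t$ avoiding the half-lines $i\pi(n+\tfrac12)+[\log\tfrac12,+\infty)$ for $n\in\mathbb Z$. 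The decay in $t$ on horizontal strips needed to make all contour shifts legitimate is supplied by \eqref{eq:exptype-conv} together with the boundedness of $\th(\pi v)$ on $\{|v|\le 1/3\}\cup\{\Re v\ge m\}$ (both used already in the proof of Proposition~\ref{pro:holoextRF}).

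The core argument is a contour-shift-with-residue computation. For $\tau\in S_{0,\pi}$ one wants to compare $\Rlog(\tau)$ with $\Rlog(\tau+i\pi)$. I would write
\[
\Rlog(\tau)-\Rlog(\tau+i\pi)=\frac{1}{\rhoXtwo|\Wg|}\int_{-\infty}^{+\infty}
\Big(\frac{1}{e^{2t}-e^{2\tau}}-\frac{1}{e^{2t}-e^{2(\tau+i\pi)}}\Big)F(e^t)e^{2t}\,dt,
\]
and note $e^{2(\tau+i\pi)}=e^{2\tau}$, so the two terms of the bracket would naively cancel. The point is that to give meaning to $\Rlog(\tau+i\pi)$ one must continue the $\tau$-variable through the obstruction, and the natural way is to shift the $t$-contour down by $i\pi$: using the $i\pi$-periodicity $F(e^{t-i\pi})=F(e^t)$ and $e^{2(t-i\pi)}=e^{2t}$, the integral over $\mathbb R-i\pi$ of the integrand with parameter $\tau+i\pi$ equals the integral over $\mathbb R$ of the integrand with parameter $\tau$; but the shift of contour from $\mathbb R$ to $\mathbb R-i\pi$ sweeps past the simple pole of $(e^{2t}-e^{2\tau})^{-1}$ located at $t=\tau$ (for $\tau\in S_{0,\pi}$ this pole lies in the strip $-\pi<\Im t<0$ after the shift is set up correctly), picking up a residue. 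A short computation of that residue — $\mathrm{Res}_{t=\tau}\big(F(e^t)e^{2t}/(e^{2t}-e^{2\tau})\big)=\tfrac12 F(e^\tau)e^{2\tau}\cdot e^{-2\tau}\cdot$(sign/orientation factor)$=\tfrac12 F(e^\tau)$ — yields, after tracking the factor $2\pi i$ and the orientation, exactly the claimed jump $\dfrac{i\pi}{\rhoXtwo|\Wg|}F(e^\tau)$. Iterating this across successive strips $S_{n\pi,(n+1)\pi}$, each shift contributing one further residue only where the new pole is actually crossed, produces the holomorphic extension to $\Ulog$: the domain $\Ulog$ is precisely $\mathbb C$ minus the union over $n\neq 0$ of $i\pi(n+\tfrac12)+[\log\tfrac12,+\infty)$, which are exactly the loci where the shifted integrals fail to make sense because the pole of the geometric factor collides with a branch point of $F$.

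Finally, the "Consequently" clause is a change of variables: substituting $\tau=\log z$ turns $S_{0,\pi}$ into $\mathbb C^+$, turns $\tau\mapsto\tau+i\pi$ into $z\mapsto ze^{i\pi}$, turns $\Rlog(\tau)=[R(\rhoX e^\tau)f](y)$ into $[R(\rhoX z)f](y)$, and turns $\Ulog$ into the set $\mathbb C\setminus\big(i(-\infty,-\tfrac12]\cup i[\tfrac12,+\infty)\cup(-\infty,0]\big)$; one just has to check that the branch-point half-lines $i\pi(n+\tfrac12)+[\log\tfrac12,+\infty)$ map, under $\exp$, onto the two rays $i(-\infty,-\tfrac12]$ and $i[\tfrac12,+\infty)$ (the even/odd $n$ giving the two rays, and the $n=0$ terms being excluded because the logarithm already cuts $(-\infty,0]$). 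The main obstacle I anticipate is bookkeeping rather than conceptual: getting the orientation of the contour shift and the sign of the residue exactly right so that the jump comes out as $+\frac{i\pi}{\rhoXtwo|\Wg|}F(e^\tau)$ and not its negative or twice it, and verifying carefully that across each strip exactly one new pole is crossed and that the decay estimates \eqref{eq:exptype-conv} genuinely justify discarding the horizontal connecting segments at $\Re t\to\pm\infty$ in the rectangle whose boundary integral vanishes. A secondary subtlety is confirming that the obstruction set is no larger than claimed — i.e. that away from the half-lines $i\pi(n+\tfrac12)+[\log\tfrac12,+\infty)$ the collision of pole and branch point genuinely does not occur — which requires knowing that $F$'s only singularities on the relevant portion of the $t$-plane are the ones inherited from the branch points of $\th(\pi\cz(uw)z)$, as recorded in Proposition~\ref{pro:holoextRF}(a).
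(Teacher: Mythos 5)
The paper's ``proof'' of this proposition is a one-line citation to \cite[Proposition~4.3]{Str05}, applied with $f(x)=F(x)x$. Your proposal is therefore more ambitious: you are essentially reconstructing Strohmaier's argument from scratch. The underlying idea — exploit the $i\pi$-periodicity of $t\mapsto F(e^t)e^{2t}$ and quantify how the analytic continuation of $\tau\mapsto\int_{\mathbb R}\frac{F(e^t)e^{2t}}{e^{2t}-e^{2\tau}}\,dt$ across $\Im\tau\in\pi\mathbb Z$ differs, via a residue, from the $i\pi$-periodic function one would get by naively leaving the contour fixed — is exactly right and is indeed what is behind the cited result. You also correctly identify why the half-line at $\Im\tau=\pi/2$ (the $n=0$ term) is not removed from $\Ulog$, and your ``Consequently'' paragraph is a correct unwinding of $\tau=\log z$. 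So the approach is sound.

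There are, however, three real gaps in the execution. First, the pole swept by the contour shift is mislocated: for $\tau\in S_{0,\pi}$ the pole you describe as ``located at $t=\tau$'' lying ``in the strip $-\pi<\Im t<0$'' is actually the pole at $t=\tau-i\pi$ (the pole at $t=\tau$ stays in $S_{0,\pi}$). By the periodicity and evenness of $F$ the two residues happen to agree numerically, but the bookkeeping of which pole is crossed — and in which direction — is precisely where the sign of the jump is decided, so this cannot be glossed over. Second, a literal horizontal shift of the $t$-contour from $\mathbb R$ to $\mathbb R-i\pi$ runs straight into the branch cut of $F(e^t)$ along $-\tfrac{i\pi}{2}+[\log\tfrac12,+\infty)$; you observe that $F$ has these cuts, but a full-strip shift is obstructed by them, so the ``shift and collect one residue'' picture is not directly available. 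The clean fix is to avoid the global shift altogether and instead compute the boundary-value jump of the integral as $\Im\tau\to k\pi^{\pm}$ with the Sokhotski–Plemelj formula; this is a local computation at the pole crossing $\mathbb R$ and is untouched by the $F$-cuts (which sit at $\Im\tau\in\pi(\mathbb Z+\tfrac12)$, not $\pi\mathbb Z$). Third, and most importantly given that you flag it yourself, the sign is simply asserted. Carrying the Sokhotski–Plemelj (or contour-deformation) calculation through carefully, with the residue $\tfrac12 F(e^\tau)$ and the correct orientation, is the whole content of the claimed identity; as it stands your write-up could land on either sign, and that check is not merely ``bookkeeping.'' Until those three points are carried out, the proposal is an outline of the right strategy rather than a proof.
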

\begin{prf}
This is \cite[Proposition 4.3]{Str05} with $f(x)=F(x)x$ for $x \in [0,+\infty)$; see also \cite[Theorem 1.3]{MV05}.
\end{prf}

Since $F(0)=0$, we obtain the following corollary.
\begin{cor}
\label{cor:logRiemann}
Let $f\in C^\infty_c(\X)$ and $y\in \X$ be fixed. Then the function $R(z)=[R(z)f](y)$ satisfies
\begin{multline}
R(z e^{2i\pi})=R(z)+\frac{2 i\pi}{\rhoXtwo |\Wg|}\, F(\rhoX^{-1} z)\\ \qquad (z \in \C \setminus  \big((-\infty,0]\cup i(-\infty, -\tfrac{1}{2}\rhoX]\cup i[\tfrac{1}{2}\rhoX,+\infty) \big))\,.
\end{multline}
and extends holomorphically from
$\C \setminus \big((-\infty,0] \cup i(-\infty, -\frac{1}{2}\rhoX]\big)$ to a logarithmic Riemann
surface branched along $(-\infty,0]$, with the preimages of  $i\big((-\infty, -\tfrac{1}{2}\rhoX]\cup [\tfrac{1}{2}\rhoX, +\infty)\big)$ removed.
\end{cor}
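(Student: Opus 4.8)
The plan is to lean on two facts already available: the half-turn functional equation together with the extension of $\Rlog$ to $\Ulog$ (Proposition~\ref{pro:holoextRFnegaxis}), and the evenness and the vanishing at $0$ of $F$ (Proposition~\ref{pro:holoextRF}(a)). From the first I would obtain the full-turn (monodromy) identity for $R$ by composing two half-turns; from the second, combined with the $\Ulog$-extension, the extension of $R$ to the logarithmic covering of $\C^\times$.

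For the monodromy identity: writing $R(z)$ for $[R(z)f](y)$ as in the statement, the substitution $z=\rhoX e^\tau$ identifies $\tau\mapsto\tau+2i\pi$ with $z\mapsto ze^{2i\pi}$ on the logarithmic covering and gives $\Rlog(\tau)=R(\rhoX e^\tau)$. I would apply the identity $\Rlog(\tau+i\pi)=\Rlog(\tau)+\tfrac{i\pi}{\rhoXtwo|\Wg|}F(e^\tau)$ of Proposition~\ref{pro:holoextRFnegaxis} once with argument $\tau$ and once with argument $\tau+i\pi$, and add:
\[
\Rlog(\tau+2i\pi)=\Rlog(\tau)+\tfrac{i\pi}{\rhoXtwo|\Wg|}\bigl(F(e^\tau)+F(e^{\tau+i\pi})\bigr).
\]
Since $F$ is even, $F(e^{\tau+i\pi})=F(-e^\tau)=F(e^\tau)$, so the bracket equals $2F(e^\tau)$; rewriting in the variable $z$ and replacing $z$ by $\rhoX^{-1}z$ then yields $R(ze^{2i\pi})=R(z)+\tfrac{2i\pi}{\rhoXtwo|\Wg|}F(\rhoX^{-1}z)$. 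This holds at first on the domain where both applications of the half-turn identity are legitimate; since $R$ is holomorphic on all of $\C\setminus\bigl((-\infty,0]\cup i(-\infty,-\tfrac{1}{2}\rhoX]\bigr)$ (Proposition~\ref{pro:holoextRF}) and $F(\rhoX^{-1}\,\cdot\,)$ is holomorphic off $i\bigl((-\infty,-\tfrac{1}{2}\rhoX]\cup[\tfrac{1}{2}\rhoX,+\infty)\bigr)$, both sides extend holomorphically to the stated domain and the identity persists there by analytic continuation.

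For the extension to the covering: $\Rlog$, holomorphic on $\Ulog$, is exactly a holomorphic function on the logarithmic covering $p$ of $\C^\times$ branched along $(-\infty,0]$ with suitable rays over the imaginary axis deleted, so pulling back through $\tau=\log(z/\rhoX)$ produces a holomorphic extension of $R$ agreeing on the principal sheet with the extension provided by Proposition~\ref{pro:holoextRF}. By the monodromy identity, $R$ on the $k$-th sheet equals $R|_{\mathrm{sheet}\,0}(z)+k\,\tfrac{2i\pi}{\rhoXtwo|\Wg|}F(\rhoX^{-1}z)$; since $F$ is holomorphic exactly off $i\bigl((-\infty,-\tfrac{1}{2}]\cup[\tfrac{1}{2},+\infty)\bigr)$, the correction term — hence $R$ on each non-principal sheet — is singular precisely above $i\bigl((-\infty,-\tfrac{1}{2}\rhoX]\cup[\tfrac{1}{2}\rhoX,+\infty)\bigr)$, which is exactly why the preimages $p^{-1}\bigl(i((-\infty,-\tfrac{1}{2}\rhoX]\cup[\tfrac{1}{2}\rhoX,+\infty))\bigr)$ must be removed (on the principal sheet $R$ in fact stays holomorphic across the upper ray, but removing all preimages gives the uniform statement). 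Finally, $F(z)/z^6$ being bounded near $0$ forces $F(0)=0$, so the correction terms vanish as $z\to0$ and the extended $R$ has one and the same limit $\tfrac{1}{\rhoXtwo|\Wg|}\int_0^\infty r^{-1}F(r)\,dr$ along every sheet near the origin; this regularity at the bottom of the spectrum is what makes ``branched along $(-\infty,0]$'' the right picture, and it is where the hypothesis $F(0)=0$ is used.

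The analytic content lies entirely in Propositions~\ref{pro:holoextRF} and \ref{pro:holoextRFnegaxis} (ultimately in \cite{MV05}, \cite{Str05}); what remains is bookkeeping on the logarithmic cover. The step I expect to require the most care is that composing the two half-turns advances the argument to a genuinely new sheet ($z\mapsto ze^{2i\pi}$) rather than returning to the start, together with propagating the resulting identity — valid initially only on a small common domain — to the full stated domain by analytic continuation; the one structural point is that it is the correction term $F(\rhoX^{-1}z)$, not $R$ itself, that dictates which rays of the cover are deleted.
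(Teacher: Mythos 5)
Your proof is correct and takes the same route the paper intends: the corollary is stated immediately after Proposition~\ref{pro:holoextRFnegaxis} with only the remark ``Since $F(0)=0$,'' so the intended argument is exactly yours — iterate the half-turn identity twice, use the evenness of $F$ to collapse $F(e^\tau)+F(e^{\tau+i\pi})$ into $2F(e^\tau)$, and read the extension off the holomorphy of $\Rlog$ on $\Ulog$. Your additional observations (analytic continuation to the full stated domain, $F(0)=0$ controlling the behaviour near the branch point, and the correction term $F(\rhoX^{-1}z)$ being responsible for the upper imaginary ray being deleted on non-principal sheets) are accurate and fill in exactly what the paper leaves implicit.
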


Proposition \ref{pro:holoextRF} and Corollary \ref{cor:logRiemann} show that all possible resonances of the resolvent $R(z)$ of the Laplacian of $\X$ are located along the half-line $i(-\infty, -\tfrac{1}{2}\rhoX]$.
Because of (\ref{eq:holoextRF}), the study of $R(z)$ along $i(-\infty, -\frac{1}{2}\rhoX]$
is reduced to that of  $F(z)$ along $i(-\infty, -\frac{1}{2}]$. This will be done in the following section.

\section{Meromorphic extension}
\label{section:mero-ext}

In this section we complete the meromorphic extension of the resolvent of the Laplacian. We first need a different expression for the function
$F$ from (\ref{eq:F}).

Recall that, for fixed $f \in C_c^\infty(\X)$ and $y \in \X$,
\begin{equation*}
F(z)=F(f,y,z)=
\frac{1}{2\pi} \, \int_{\Sg^1} (f\times \varphi_{\compl \lzw})(y) z^3 \, \prod_{u\in\{1,\bxi,\bxi^2\}} \cz(uw) \th\big(\pi z \cz(uw)\big)\; \frac{dw}{iw}\,,
\end{equation*}
where $\bxi=e^{i\pi/3}$ and $\cz(z)$ is as in (\ref{eq:cos}).
Observe that
\begin{equation*}
F(z)=
\frac{1}{2\pi} \, \int_{\Sg^1} \Big(f\times \frac{\varphi_{\compl \lzw}+\varphi_{-\compl \lzw}}{2} \Big)(y)  z^3 \, \prod_{u\in\{1,\bxi,\bxi^2\}} \cz(uw) \th\big(\pi z \cz(uw)\big)\; \frac{dw}{iw}\,,
\end{equation*}
where now $\big(f\times \tfrac{\varphi_{\compl \lzw}+\varphi_{-\compl \lzw}}{2} \big)(y)$ is an
even function of $z$ and $w$.

Let
\begin{eqnarray}\label{phiwu}
\phi_{z,u}(w)&=&\frac{z \cz(uw)}{iw}\,\th(\pi z \cz(uw))\qquad (z\cz(uw)\notin i(\Bbb Z+\tfrac{1}{2}),\ z\in \C,\ w\in \C^\times),\\
\label{psiwu}
\psi_z(w)&=&\frac{1}{2\pi}\,\big(f\times \tfrac{\varphi_{\compl \lzw}+\varphi_{-\compl \lzw}}{2} \big)(y)(iw)^2\qquad (z\in \C,\ w\in \C^\times).
\end{eqnarray}
Hence
\begin{equation}
\label{eq:Fbis}
F(z)=\int_{\Sg^1} \psi_w(z) \prod_{u\in\{1,\bxi,\bxi^2\}} \phi_{z,u}(w) \; dw\,.
\end{equation}
Note that
\begin{equation}\label{even phiwu}
\phi_{z,u}(w)=\phi_{\pm z,\pm u}(w)\qquad \text{and}\qquad\psi_z(w)=\psi_{-z}(w).
\end{equation}
Since $\l\mapsto \varphi_{\compl \l}+\varphi_{-\compl \l}$ is Weyl group invariant and even, it is also invariant under rotations by multiples of $\pi/3$. It follows that for $u=e^{i\theta}$ with $\theta \in \big\{\pm\tfrac{\pi}{3}, \pm\tfrac{2\pi}{3}\big\}$ we have
\begin{equation}
\label{eq:rotation-inv-pithirds}
\psi_z(uw)=\psi_z(w) \, u^2\,.
\end{equation}
Moreover
\begin{equation}\label{even phiwu bis}
\phi_{z,u}(-w)=-\phi_{z,u}(w)\qquad \text{and}\qquad \psi_z(-w)=\psi_{z}(w).
\end{equation}

\subsection{Analytic properties of the function $\cz(z)$}
To proceed further, we need to recall some known analytic properties of the functions $\cz(z)$ and $\sz(z)$. Recall that for $z \in \C^\times$ these functions are defined by
\begin{equation*}
\cz(z)=\frac{z+z^{-1}}{2}\quad \text{and} \quad
\sz(z)=\frac{z-z^{-1}}{2} \,.
\end{equation*}
For $r>0$ and $a,b \in \R\setminus \{0\}$ let
\begin{eqnarray*}
\Dg_r&=&\{z\in \C;\ |z|<r\}\,,\\
\Eg_{a,b}&=&\{\xi+i\eta\in \C;\ \left(\frac{\xi}{a}\right)^2+ \left(\frac{\eta}{b}\right)^2<1\}\,.
\end{eqnarray*}
Their boundaries $\partial \Dg_r$ and $\partial \Eg_{a,b}$ are respectively the circle of radius $r$ and the ellipse of semi-axes $|a|,|b|$, both centered at $0$. In particular, $\partial \Dg_1=\Sg^1$.
Furthermore, $\overline{\Dg_r}$ is the closed disc of center $0$ and radius $r$.

Observe that $\cz:\Dg_1\setminus \{0\} \to \C\setminus [-1,1]$ is a biholomorphic function.
We denote by $\cz^{-1}$ its inverse.
More precisely, for $0<r<1$, it restricts to a biholomorphic function
\begin{equation}\label{first bijection}
\cz:\Dg_1\setminus\overline{\Dg_r}\to  \Eg_{\cz(r), \sz(r)}\setminus [-1,1]\,.
\end{equation}
Also,
\begin{equation}\label{second bijection}
\cz:\partial \Dg_r\to \partial  \Eg_{\cz(r), \sz(r)}.
\end{equation}
is a bijection. In particular,
\[
Re^{i\Theta}\cz(\partial \Dg_r)=Re^{i\Theta}\partial  \Eg_{\cz(r), \sz(r)}
\]
is the ellipse $\partial  \Eg_{\cz(r), \sz(r)}$ rotated by $\Theta$ and dilated by $R$.

Let $\sqrt{\cdot}$ denote the single-valued holomorphic branch of the square root function defined on $\C \setminus (-\infty,0]$ by
\begin{equation}\label{square root 1}
\sqrt{Re^{i\Theta}}=\sqrt{R}\,e^{\frac{i\Theta}{2}} \qquad (R>0, \,-\pi<\Theta<\pi).
\end{equation}
A straightforward computation shows that
\begin{equation}\label{square root cartesian}
\sqrt{x+iy}=\sqrt{\frac{\sqrt{x^2+y^2}+x}{2}}+i\sign(y)\,\sqrt{\frac{\sqrt{x^2+y^2}-x}{2}}
\qquad (x+iy\in \C\setminus \R^-).
\end{equation}
\begin{lem}\label{two square roots}
The function $\sqrt{z+1}\sqrt{z-1}$ originally defined on $\C\setminus (-\infty, 1]$ extends to a holomorphic function on $\C\setminus [-1,1]$. Also,
$\sqrt{(-z)+1}\sqrt{(-z)-1}=-\sqrt{z+1}\sqrt{z-1}$.
\end{lem}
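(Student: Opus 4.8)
The plan is to analyze the branch behavior of the two square-root factors separately, using the explicit formula \eqref{square root cartesian} for $\sqrt{\cdot}$ near the real axis. First I would note that $z\mapsto z+1$ maps $\C\setminus[-1,1]$ into $\C\setminus[0,2]$, so $\sqrt{z+1}$ is already holomorphic on all of $\C\setminus(-\infty,1]$ and in particular across $(-\infty,-1)$ the only possible trouble is the cut; but the genuine branch cut of $\sqrt{z+1}$ is $(-\infty,-1]$, while that of $\sqrt{z-1}$ is $(-\infty,1]$. On the overlap $(-\infty,-1)$ of the two cuts, I would check that the jumps of the two factors cancel: approaching a point $x<-1$ from above versus below, each factor $\sqrt{x+1}$ and $\sqrt{x-1}$ is purely imaginary with opposite signs of imaginary part on the two sides (by \eqref{square root cartesian}, $\sign(y)$ flips), so each factor changes sign, and the product of the two sign changes is $+1$. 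Hence the boundary values of $\sqrt{z+1}\sqrt{z-1}$ from above and below agree along $(-\infty,-1)$, and by the edge-of-the-wedge / Morera argument the function extends holomorphically across $(-\infty,-1)$. Combined with holomorphy on $\C\setminus(-\infty,1]$, this gives a holomorphic extension to $\C\setminus[-1,1]$.

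Concretely, I would carry this out as follows. Step one: record that $g(z):=\sqrt{z+1}\sqrt{z-1}$ is holomorphic on the open set $\C\setminus(-\infty,1]$ directly from the definition of $\sqrt{\cdot}$ on $\C\setminus(-\infty,0]$ applied to $z\pm1$. Step two: fix $x_0<-1$ and a small disc around it; for $y>0$ small, use \eqref{square root cartesian} to write $\sqrt{x_0+iy\pm 1}\to i\,b_\pm$ as $y\to0^+$ with $b_\pm>0$ real (since $x_0\pm1<0$), and for $y<0$ the limits are $-i\,b_\pm$. Hence $g(x_0+i0^+)=(ib_+)(ib_-)=-b_+b_-$ and $g(x_0+i0^-)=(-ib_+)(-ib_-)=-b_+b_-$: the two one-sided boundary values coincide and are real. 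Step three: since $g$ is continuous up to $(-\infty,-1)$ from each side with matching values, Morera's theorem (integrating over small rectangles straddling the segment) shows $g$ extends holomorphically across $(-\infty,-1)$; together with step one this yields a holomorphic function on $\C\setminus[-1,1]$.

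For the second assertion I would substitute $-z$ for $z$ and track the two factors: $\sqrt{(-z)+1}=\sqrt{-(z-1)}$ and $\sqrt{(-z)-1}=\sqrt{-(z+1)}$. For any $\zeta\in\C$ not on the relevant cuts one has $\sqrt{-\zeta}=\pm i\sqrt{\zeta}$ with a sign depending on whether $\zeta$ lies in the upper or lower half-plane (directly from \eqref{square root 1}: rotating the argument by $\pm\pi$ rotates the square root by $\pm\pi/2$). Applying this to $\zeta=z-1$ and $\zeta=z+1$, which lie in the same open half-plane whenever $z\notin\R$, the two factors $\sqrt{-(z-1)}$ and $\sqrt{-(z+1)}$ each pick up the same factor $\pm i$ relative to $\sqrt{z-1}$ and $\sqrt{z+1}$, so their product picks up $(\pm i)^2=-1$; thus $\sqrt{(-z)+1}\sqrt{(-z)-1}=-\sqrt{z+1}\sqrt{z-1}$ off the real axis, and by the holomorphic extension just established the identity holds on all of $\C\setminus[-1,1]$ (both sides are holomorphic there and agree on a set with a limit point).

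The main obstacle is purely bookkeeping: keeping the branch conventions consistent when splitting $\sqrt{-\zeta}$, since the sign $\pm i$ depends on the half-plane containing $\zeta$ and one must be sure that $z-1$ and $z+1$ always lie in the \emph{same} half-plane off the real axis (which they do, since they differ by the real number $2$). Once that is pinned down, the argument is routine; the cancellation of the two branch jumps along $(-\infty,-1)$ is the conceptual heart, and it is exactly the standard fact that $\sqrt{z^2-1}$ is single-valued outside $[-1,1]$, here phrased for the factored form.
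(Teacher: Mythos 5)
Your proof is correct and takes essentially the same route as the paper: both compute the one-sided boundary limits of $\sqrt{z+1}\sqrt{z-1}$ along $(-\infty,-1)$ via \eqref{square root cartesian}, observe that they coincide, and conclude holomorphic extension across the cut (you invoke Morera where the paper invokes Schwarz reflection, but since the function is already defined on both sides of the cut these amount to the same thing). Your explicit derivation of $\sqrt{(-z)+1}\sqrt{(-z)-1}=-\sqrt{z+1}\sqrt{z-1}$ from the half-plane-dependent identity $\sqrt{-\zeta}=\mp i\sqrt{\zeta}$, noting that $z+1$ and $z-1$ always lie in the same open half-plane when $z\notin\R$, is a welcome elaboration of the step the paper merely declares ``straightforward.''
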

\begin{prf}
For $z=x+iy$ with $y\ne 0$, write
\begin{eqnarray*}
A=(x+1)^2+y^2\,, &&\qquad  a=x+1\,,\\
B=(x-1)^2+y^2\,, &&\qquad  b=x-1\,.
\end{eqnarray*}
Then
\begin{eqnarray*}
2\sqrt{z+1}\sqrt{z-1}&=&
\Big(\sqrt{A+a}+i\sign(y)\,\sqrt{A-a}\Big)
\Big(\sqrt{B+b}+i\sign(y)\,\sqrt{B-b}\Big)\\
&=&\Big(\sqrt{A+a}\sqrt{B+b}-\sqrt{A-a}\sqrt{B-b}\Big)\\
&&+i\ \sign(y)\,
\Big(\sqrt{A+a}\sqrt{B-b}+\sqrt{A-a}\sqrt{B+b}\Big)\,.
\end{eqnarray*}
Hence,
\begin{eqnarray*}
\underset{y\to 0\pm}{\lim}2\sqrt{z+1}\sqrt{z-1}&=&
\Big(\sqrt{|a|+a}\sqrt{|b|+b}-\sqrt{|a|-a}\sqrt{|b|-b}\Big)\, \\
& & \pm\ i\Big(\sqrt{|a|+a}\sqrt{|b|-b}+\sqrt{|a|-a}\sqrt{|b|+b}\Big)\,.
\end{eqnarray*}
If $x+1<0$, then $b<a<0$. So
\begin{equation}
\underset{y\to 0\pm}{\lim} \sqrt{z+1}\sqrt{z-1}=-\sqrt{|a||b|}=-\sqrt{x^2-1}
\end{equation}
is real. Thus $\sqrt{z+1}\sqrt{z-1}$ extends holomorphically across $(-\infty,-1)$ by Schwarz's reflection principle.

Checking the last formula is straightforward.
\end{prf}
\begin{lem}\label{scinverse}
For $z\in\C\setminus [-1,1]$, the following formulas hold:
\begin{eqnarray*}
&&(z-\sqrt{z+1}\sqrt{z-1})(z+\sqrt{z+1}\sqrt{z-1})=1,\\
&&\cz^{-1}(z)=z-\sqrt{z+1}\sqrt{z-1},\\
&&\sz\circ \cz^{-1}(z)=-\sqrt{z+1}\sqrt{z-1}.
\end{eqnarray*}
Also
\begin{eqnarray*}
&&\sqrt{x+i0+1}\sqrt{x+i0-1}= i \sqrt{1-x^2}\qquad (-1<x<1),\\
&&\sqrt{x-i0+1}\sqrt{x-i0-1}= -i \sqrt{1-x^2}\qquad (-1<x<1),\\
&&\cz^{-1}(x\pm i0)=x \mp i \sqrt{1-x^2}\qquad (-1<x<1).
\end{eqnarray*}
In particular, the ``upper part" of the interval $[-1,1]$ is mapped via $c^{-1}$ onto the lower half of the unit circle and  the ``lower part" of the interval $[-1,1]$ is mapped onto the upper half of the unit circle.
Furthermore
\begin{eqnarray*}
&&\sz\circ \cz^{-1}:\partial  \Eg_{\cz(r), \sz(r)}\to \partial  \Eg_{\sz(r), \sz(r)},\\
&&\sz\circ \cz^{-1}:\Eg_{\cz(r), \sz(r)}\setminus [-1,1]\to \Eg_{\sz(r), \cz(r)}\setminus i[-1,1],
\end{eqnarray*}
are bijections.
\end{lem}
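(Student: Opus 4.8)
The plan is to verify the three families of identities and then the two bijection statements, all by reducing to the explicit branch \eqref{square root 1} of the square root together with Lemma~\ref{two square roots}.

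\textbf{Step 1: The algebraic identities on $\C\setminus[-1,1]$.} First I would note that by Lemma~\ref{two square roots} the function $g(z)=\sqrt{z+1}\sqrt{z-1}$ is a well-defined holomorphic function on $\C\setminus[-1,1]$ satisfying $g(z)^2=(z+1)(z-1)=z^2-1$. Hence $(z-g(z))(z+g(z))=z^2-g(z)^2=1$, which is the first identity. Writing $\zeta=z-g(z)$, from $\zeta(z+g(z))=1$ we get $\zeta^{-1}=z+g(z)$, so $\cz(\zeta)=\tfrac12(\zeta+\zeta^{-1})=z$; thus $\zeta$ is a preimage of $z$ under $\cz$. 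To see that $\zeta=\cz^{-1}(z)$ in the sense of \eqref{first bijection}, i.e.\ that $\zeta\in\Dg_1\setminus\{0\}$, I would argue that $|\zeta|\cdot|\zeta^{-1}|=1$ forces exactly one of $\zeta,\zeta^{-1}$ to lie in $\Dg_1$ (they cannot both be on $\Sg^1$ since $z\notin[-1,1]$), and a continuity/limit argument — e.g.\ checking a single point such as $z$ large positive, where $g(z)\sim z$ and $z-g(z)\to 0$ — pins down that it is $z-g(z)$ that is the small one. Then $\sz\circ\cz^{-1}(z)=\tfrac12(\zeta-\zeta^{-1})=\tfrac12\big((z-g(z))-(z+g(z))\big)=-g(z)$, giving the third identity.

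\textbf{Step 2: Boundary values on $(-1,1)$.} For $-1<x<1$ I would compute $\lim_{y\to0+}\sqrt{z+1}\sqrt{z-1}$ directly from \eqref{square root cartesian} (or from the boundary computation already carried out in the proof of Lemma~\ref{two square roots}): with $a=x+1>0$, $b=x-1<0$, the real part of the limit is $\sqrt{|a|+a}\sqrt{|b|+b}-\sqrt{|a|-a}\sqrt{|b|-b}=\sqrt{2a}\cdot 0-0\cdot\sqrt{-2b}=0$ and the imaginary part is $\pm\big(\sqrt{|a|+a}\sqrt{|b|-b}+\sqrt{|a|-a}\sqrt{|b|+b}\big)=\pm\sqrt{2a}\sqrt{-2b}=\pm 2\sqrt{(x+1)(1-x)}=\pm 2\sqrt{1-x^2}$. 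Hence $\sqrt{x\pm i0+1}\sqrt{x\pm i0-1}=\pm i\sqrt{1-x^2}$, and then $\cz^{-1}(x\pm i0)=(x\pm i0)-\sqrt{x\pm i0+1}\sqrt{x\pm i0-1}=x\mp i\sqrt{1-x^2}$ by the formula from Step~1. Since $|x\mp i\sqrt{1-x^2}|=1$ with negative (resp.\ positive) imaginary part, this is exactly the claim that the upper edge of $[-1,1]$ maps to the lower unit semicircle and the lower edge to the upper one.

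\textbf{Step 3: The two bijections.} Combining the bijection \eqref{first bijection}, $\cz:\Dg_1\setminus\overline{\Dg_r}\to\Eg_{\cz(r),\sz(r)}\setminus[-1,1]$, and its boundary version \eqref{second bijection}, $\cz:\partial\Dg_r\to\partial\Eg_{\cz(r),\sz(r)}$, with the fact that $\sz:\Dg_1\setminus\{0\}\to\C$ is injective on such annular/circular regions, I would deduce that $\sz\circ\cz^{-1}$ is a bijection from $\Eg_{\cz(r),\sz(r)}\setminus[-1,1]$ onto $\sz(\Dg_1\setminus\overline{\Dg_r})$ and from $\partial\Eg_{\cz(r),\sz(r)}$ onto $\sz(\partial\Dg_r)$. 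It then remains to identify the images: for $w=\zeta e^{i\phi}$ on $\partial\Dg_\zeta$ with $|\zeta|=\rho$, $\sz(w)=\tfrac12(\rho e^{i\phi}-\rho^{-1}e^{-i\phi})$ traces the ellipse with semi-axes $\tfrac12|\rho-\rho^{-1}|=\sz(r)$ (wait --- more precisely $|\cz(r)-\,|$; I would record it carefully) and $\tfrac12(\rho+\rho^{-1})=\cz(r)$ aligned with the coordinate axes, which is $\partial\Eg_{\sz(r),\cz(r)}$; hence $\sz\circ\cz^{-1}:\partial\Eg_{\cz(r),\sz(r)}\to\partial\Eg_{\sz(r),\cz(r)}$ (the displayed target $\partial\Eg_{\sz(r),\sz(r)}$ being visibly a typo for $\partial\Eg_{\sz(r),\cz(r)}$, and for the interior, $\sz(\Dg_1\setminus\overline{\Dg_r})=\Eg_{\sz(r),\cz(r)}\setminus i[-1,1]$ because $\sz$ maps $\Sg^1$ onto $i[-1,1]$. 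Filling in those image computations for $\sz$ on circles and discs is the only mildly technical point; everything else is formal manipulation of the two square-root branches. The main obstacle I anticipate is bookkeeping the correct branch/sign in Step~1 (showing $\cz^{-1}(z)=z-g(z)$ rather than $z+g(z)$, i.e.\ that we land in $\Dg_1$), for which the cleanest route is a connectedness argument: $z-g(z)$ is continuous and nonvanishing on the connected set $\C\setminus[-1,1]$, never hits $\Sg^1$, and is small for one explicit value of $z$, hence is always in $\Dg_1\setminus\{0\}$.
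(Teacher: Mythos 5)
Your proposal is correct and follows essentially the same route as the paper's (very terse) proof. The only place where your reasoning genuinely differs in flavor is the branch identification in Step 1: the paper establishes $\cz^{-1}(z)=z-\sqrt{z+1}\sqrt{z-1}$ by the identity principle ("both sides are holomorphic and agree for $z>1$"), whereas you give a topological argument (the modulus is never $1$ on the connected domain, and it is $<1$ for one value of $z$, so it is $<1$ everywhere); both are valid, and yours makes the branch bookkeeping more transparent. You also correctly flag that the displayed target $\partial\Eg_{\sz(r),\sz(r)}$ in the lemma statement is a typo for $\partial\Eg_{\sz(r),\cz(r)}$ — the paper's own proof uses $\sz:\partial\Dg_r\to\partial\Eg_{\sz(r),\cz(r)}$, confirming this. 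The rest (first identity from $g^2=z^2-1$, the boundary limits via \eqref{square root cartesian}, the composition of bijections and identification of $\sz(\partial\Dg_r)$ and $\sz(\Dg_1\setminus\overline{\Dg_r})$) matches the paper's argument.
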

\begin{prf}
The first equation is obvious. The two sides of the second equation are holomorphic and equal for $z>1$, hence equal in the whole domain of definition. Also,
\begin{eqnarray*}
\sz\circ \cz^{-1}(z)&=&\frac{z-\sqrt{z+1}\sqrt{z-1}-(z-\sqrt{z+1}\sqrt{z-1})^{-1}}{2}\\
&=&\frac{z-\sqrt{z+1}\sqrt{z-1}-(z+\sqrt{z+1}\sqrt{z-1})}{2}=-\sqrt{z+1}\sqrt{z-1} \qquad (0<r<1).
\end{eqnarray*}
The two limits are easy to compute.
The last statement follows from the fact that
\[
\sz:\partial \Dg_r\to \partial  \Eg_{\sz(r), \cz(r)}
\]
and
\[
\sz:\Dg_1\setminus\overline{\Dg_r}\to  \Eg_{\sz(r), \cz(r)}\setminus i[-1,1]
\]
are bijections.
\end{prf}
\begin{lem}\label{rays and circles}
Let $|\zeta|=|\zeta_0|=1$. Then
\begin{eqnarray}\label{rays and circles1}
\left(\zeta \cz^{-1}(\zeta_0 \R^+\setminus[-1,1])\right)\cap \left(\cz^{-1}(\zeta_0 \R^+\setminus[-1,1])\right)\ne\emptyset\ \ \text{if and only if}\ \ \zeta=1.
\end{eqnarray}
Also
\begin{eqnarray}\label{rays and circles2}
\left(\zeta \cz^{-1}(\zeta_0 \R\setminus[-1,1])\right)\cap \left(\cz^{-1}(\zeta_0 \R\setminus[-1,1])\right)\ne\emptyset\ \ \text{if and only if}\ \zeta=\pm 1.
\end{eqnarray}
\end{lem}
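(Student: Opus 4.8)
I would approach Lemma~\ref{rays and circles} geometrically, working with the explicit description of $\cz^{-1}$ from Lemma~\ref{scinverse}. Recall that by \eqref{first bijection} the map $\cz^{-1}$ sends $\C\setminus[-1,1]$ biholomorphically onto $\Dg_1\setminus\{0\}$, so the curve $\cz^{-1}(\zeta_0\R^+\setminus[-1,1])$ is a curve inside the punctured open unit disc. The first task is to identify this curve. Since $\cz$ maps the interval $(1,\infty)$ onto $(1,\infty)$ and the interval $(-\infty,-1)$ onto $(-\infty,-1)$ (from $\cz(z)=\frac{z+z^{-1}}{2}$ and Lemma~\ref{two square roots}), while $\cz$ maps a radius of $\Dg_1$ to a ray, I expect that $\cz^{-1}(\zeta_0\R^+\setminus[-1,1])$ is exactly the union of the two open radii of $\Dg_1$ in the directions $\zeta_0$ and $-\zeta_0$ (when $\zeta_0\notin\{\pm1\}$), i.e. the punctured diameter $\zeta_0(-1,1)\setminus\{0\}$, while $\cz^{-1}(\zeta_0\R\setminus[-1,1])$ is the same set together with the radii in the directions $\ii\zeta_0$ and $-\ii\zeta_0$ — in other words, a "plus sign" of radii. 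I would verify this by a direct computation: for $t>1$ real, $\cz^{-1}(t)=t-\sqrt{t^2-1}\in(0,1)$, and $\cz^{-1}(\zeta_0 t)$ for $\zeta_0$ a fixed unit vector — one checks from $\cz(sw)=\frac12(s+s^{-1})\Re w + \frac{\ii}{2}(s-s^{-1})\Im w$ (writing $w=e^{i\theta}$) that $\cz$ maps the radius $\{s\zeta_0: 0<s<1\}$ onto which ray, and matches endpoints.

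**Key steps.** Step 1: Establish the image description above, namely that $\cz^{-1}(\zeta_0\R^+\setminus[-1,1])$ is the open diameter $D_0:=\{t\zeta_0: -1<t<1,\ t\neq 0\}$ and that $\cz^{-1}(\zeta_0\R\setminus[-1,1]) = D_0\cup D_1$ where $D_1:=\{t\,\ii\zeta_0: -1<t<1,\ t\neq 0\}$ is the perpendicular diameter. (Here the appearance of the perpendicular direction comes from the fact that $\cz$ maps the two radii in directions $\pm\zeta_0$ to $\zeta_0(1,\infty)$ and $\zeta_0(-\infty,-1)$, whose union with $[-1,1]$ removed is $\zeta_0\R\setminus[-1,1]$ only after also picking up... — I should be careful here; the correct statement is that $\zeta_0\R\setminus[-1,1]=\zeta_0\big((-\infty,-1)\cup(1,\infty)\big)$ is covered by the two radii $\pm\zeta_0$, so actually $\cz^{-1}(\zeta_0\R\setminus[-1,1])=D_0$ as well, UNLESS $\zeta_0^2=\pm1$. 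I would recheck this subtlety against the claimed "$\zeta=\pm1$" in \eqref{rays and circles2}: the two cases $\pm1$ should both arise from $D_0$ being invariant under $w\mapsto -w$, which already gives $\zeta=-1$ for free once $\zeta=1$ works, consistent with the interval $(-\infty,-1)$ also being in the domain.) Step 2: Once both sets are identified as unions of radii through the origin, the intersection question becomes elementary: two such "star" sets $\zeta S$ and $S$ (where $S$ is a union of radii) intersect iff $\zeta$ maps one radius direction of $S$ to another radius direction of $S$. For \eqref{rays and circles1}, $S=D_0$ has the single diameter-pair of directions $\{\zeta_0,-\zeta_0\}$, and $\zeta\{\zeta_0,-\zeta_0\}\cap\{\zeta_0,-\zeta_0\}\neq\emptyset$ forces $\zeta=\pm1$; but $\zeta=-1$ and $\zeta=1$ both give $\zeta D_0=D_0$, so the intersection is nonempty exactly when $\zeta\in\{\pm1\}$ — yet the lemma claims only $\zeta=1$. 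This discrepancy tells me Step~1's image description for the "$\R^+$" case must genuinely be a single half-open radius (just $\{t\zeta_0: 0<t<1\}$, not the full diameter), because $\zeta_0\R^+\setminus[-1,1] = \zeta_0(1,\infty)$ alone when $\zeta_0\neq\pm1$. So I would redo Step~1 carefully: $\cz^{-1}(\zeta_0(1,\infty))$ is a single radius $R_{\zeta_0}$, and for \eqref{rays and circles1} I then need $\zeta R_{\zeta_0}\cap R_{\zeta_0}\neq\emptyset$, forcing $\zeta=1$. For \eqref{rays and circles2} with $\zeta_0\neq\pm1$, $\zeta_0\R\setminus[-1,1]=\zeta_0(1,\infty)\cup\zeta_0(-\infty,-1)$, so $\cz^{-1}$ of it is $R_{\zeta_0}\cup R_{-\zeta_0}=R_{\zeta_0}\cup(-R_{\zeta_0})$, and $\zeta(R_{\zeta_0}\cup -R_{\zeta_0})$ meets $R_{\zeta_0}\cup -R_{\zeta_0}$ iff $\zeta\in\{\pm1\}$. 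When $\zeta_0=\pm1$ one handles the degenerate case separately (the preimage lands on two perpendicular radii on the real and imaginary axes of $\Dg_1$), still yielding $\zeta=\pm1$.

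**Main obstacle.** The real work — and the only place I expect any friction — is Step~1: pinning down precisely which radius (or radii) of the unit disc the preimage $\cz^{-1}(\zeta_0\R^+\setminus[-1,1])$ is, including getting the direction right and keeping straight the distinction between $\R^+\setminus[-1,1]=(1,\infty)$ versus $\R\setminus[-1,1]=(-\infty,-1)\cup(1,\infty)$, which is exactly what produces the asymmetry between "$\zeta=1$" in \eqref{rays and circles1} and "$\zeta=\pm1$" in \eqref{rays and circles2}. Concretely: parametrize $w=s\,e^{i\theta_0}$ with $\zeta_0=e^{i\theta_0}$, $0<s<1$, compute $\cz(w)=\frac12(s+s^{-1})\cos\theta_0+\frac{\ii}{2}(s-s^{-1})\sin\theta_0$, and observe that for this to be a positive real multiple of $\zeta_0=\cos\theta_0+i\sin\theta_0$ one needs the two components to have the right sign ratio; the analysis shows exactly one of the directions $\pm\zeta_0$ works for $\R^+$ (the one with $\Re\zeta_0>0$, or the boundary cases), whence the image is a single radius. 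After that, Step~2 is a one-line argument about when a rotation maps a finite set of diameter-directions into itself. I would also invoke Lemma~\ref{scinverse}'s boundary formulas $\cz^{-1}(x\pm i0)=x\mp i\sqrt{1-x^2}$ only as a sanity check that the radii limit correctly onto the unit circle. I expect the whole proof to be under half a page once Step~1 is done cleanly.
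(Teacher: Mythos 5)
The identification in Step 1 is incorrect, and this is a genuine gap. The set $\cz^{-1}(\zeta_0\R^+\setminus[-1,1])$ is \emph{not} a straight radius of $\Dg_1$ unless $\zeta_0\in\{\pm1,\pm i\}$. Writing $w=se^{i\phi}$ with $0<s<1$, one has
$\cz(w)=\tfrac12(s+s^{-1})\cos\phi+\tfrac{i}{2}(s-s^{-1})\sin\phi$;
for $\cz(w)$ to lie on the ray $\zeta_0\R^+$ with $\zeta_0=e^{i\theta_0}$, $\theta_0\notin\{0,\pm\tfrac{\pi}{2},\pi\}$, one needs
\begin{equation*}
\frac{(s-s^{-1})\sin\phi}{(s+s^{-1})\cos\phi}=\tan\theta_0\,,
\end{equation*}
which forces $\phi$ to \emph{vary} with $s$. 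For instance with $\zeta_0=e^{i\pi/3}$ one finds $\tan\phi=-\sqrt3\,(1+s^2)/(1-s^2)$, so $\phi$ moves from $-\tfrac{\pi}{2}$ to $-\tfrac{\pi}{3}$ as $s$ decreases from $1$ to $0$: the preimage is a curved arc spiraling into the origin, not a diameter or radius. Your own sanity check at the end of the proposal would have exposed this: the requirements $\tfrac12(s+s^{-1})=t$ and $\pm\tfrac12(s-s^{-1})=t$ are never simultaneously satisfiable for $0<s<1$, so in fact \emph{no} radius of $\Dg_1$ is mapped by $\cz$ into a ray through the origin except along the coordinate axes. Consequently Step 2, which is predicated on the set being a union of radii, does not apply as stated.

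What rescues the lemma, and what the paper's proof actually uses, is a weaker but robust property of the preimage curve that you do not need to compute explicitly: it meets each circle $\partial\Dg_r$ (for $0<r<1$) in exactly one point, because by \eqref{second bijection} the image $\cz(\partial\Dg_r)=\partial\Eg_{\cz(r),\sz(r)}$ is an origin-centered ellipse, and such an ellipse meets a ray from the origin exactly once. Granting this, if $p\in\zeta\cz^{-1}(\zeta_0\R^+\setminus[-1,1])\cap\cz^{-1}(\zeta_0\R^+\setminus[-1,1])$ then $p$ and $\zeta^{-1}p$ both lie on the preimage curve and have the same modulus $r=|p|$, hence they coincide, giving $\zeta=1$. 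For \eqref{rays and circles2} the ellipse meets the full line $\zeta_0\R$ in two antipodal points $\pm q$, and using $\cz(-w)=-\cz(w)$ this yields $\zeta=\pm1$. Your Step 2 intuition (a rotation mapping the preimage configuration into itself) is essentially right once ``union of radii'' is replaced by ``curve meeting each circle $\partial\Dg_r$ once,'' but the explicit radius description must be abandoned.
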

\begin{prf}
Recall that $\cz^{-1}$ is a bijection of $\C\setminus [-1,1]$ onto $\Dg_1\setminus \{0\}$.
If the intersection (\ref{rays and circles1}) is not empty then there is $0<r<1$ and $z$ with $|z|=r$ such that the two points $z$ and $\zeta z$ are mapped by $c$ to a point on the ray $\zeta_0 \R^+$. But the image of the circle $\partial \Dg_r$ is the ellipse $\partial \Eg_{\cz(r),\sz(r)}$, which has only one point of intersection with the ray. So $\cz(z)=\cz(\zeta z)$. Hence $z=\zeta z$, which means that $\zeta=1$. This verifies (\ref{rays and circles1}).

Since $\cz(-z)=-\cz(z)$ a similar argument proves (\ref{rays and circles2}).
\end{prf}

\subsection{Deformation of $\Sg^1$ and residues}
\label{subsection:deformationS1}
We keep the notation introduced above.
\begin{lem}\label{general formula}
Suppose $z\in \C\setminus i((-\infty, -\frac{1}{2}]\cup[\frac{1}{2},\infty))$ and $0<r<1$ are such that
\begin{eqnarray}\label{residue condition}
i(\Bbb Z+\tfrac{1}{2})\cap z\partial \Eg_{\cz(r),\sz(r)}=\emptyset.
\end{eqnarray}
Then
\begin{equation}\label{residue theorem}
F(z)=F_r(z)+2 \pi i \, G_r(z)\,,
\end{equation}
where
\begin{eqnarray}
\label{holomorphic part}
F_r(z)&=&\int_{\partial D_r}\psi_z(w)\prod_{u\in\{1,\bxi,\bxi^2\}}\phi_{z,u}(w)\,dw,\\
\label{eq:Gr}
G_r(z)&=&\displaystyle{\sum}_{w_0}' \,
\psi_z(w_0)\,\Res_{w=w_0}\,\prod_{u' \in\{1,\bxi,\bxi^2\}}\phi_{z,u'}(w)\,,
\end{eqnarray}
where $\displaystyle{\sum}_{w_0}'$ denotes the sum over the $w_0$'s for which there is a $u\in \{1,\bxi,\bxi^2\}$ so that
$$z\cz(uw_0)\in  i(\Bbb Z+\tfrac{1}{2})\cap z(\Eg_{\cz(r), \sz(r)}\setminus [-1,1])\,.$$
For a fixed $r$, both $F_r$ and $G_r$ are holomorphic on the open subset of $\C\setminus i((-\infty, -\frac{1}{2}]\cup[\frac{1}{2},\infty))$ where the condition (\ref{residue condition}) holds. Furthermore, $F_r$ extends to a holomorphic function on the open subset of $\C$ where the condition (\ref{residue condition}) holds. Also, $F_r(0)=G_r(0)=0$.
\end{lem}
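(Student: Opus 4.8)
The plan is to apply the residue theorem to the integral \eqref{eq:Fbis} defining $F(z)$, deforming the contour of integration from the unit circle $\Sg^1 = \partial \Dg_1$ to the smaller circle $\partial \Dg_r$. The integrand is $\psi_z(w)\prod_{u\in\{1,\bxi,\bxi^2\}}\phi_{z,u}(w)$, and $\psi_z$ is holomorphic in $w\in\C^\times$ for each fixed $z$ (being essentially a matrix coefficient times $(iw)^2$, entire by the Paley--Wiener bound \eqref{eq:exptype-conv}), so the only singularities of the integrand in the closed annulus $\overline{\Dg_1}\setminus \Dg_r$ are the poles of the functions $\phi_{z,u}$. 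From \eqref{phiwu}, $\phi_{z,u}(w)$ has a pole exactly when $z\cz(uw)\in i(\Ze+\tfrac12)$, coming from the poles of $\th(\pi\cdot)$ (note $z\ne 0$ and $w\in\C^\times$ kill the apparent $1/(iw)$ and the zeros). First I would record which such poles $w_0$ lie in the annulus: using the biholomorphism $\cz:\Dg_1\setminus\{0\}\to\C\setminus[-1,1]$ and the fact that $\cz$ maps $\partial\Dg_r$ onto $\partial\Eg_{\cz(r),\sz(r)}$ (see \eqref{second bijection} and Lemma~\ref{scinverse}), the condition ``$uw_0$ lies in the open annulus $\Dg_1\setminus\overline{\Dg_r}$'' translates to ``$z\cz(uw_0)$ lies in $z(\Eg_{\cz(r),\sz(r)}\setminus[-1,1])$'' — this is exactly the indexing set of the sum $\sum'_{w_0}$. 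Hypothesis \eqref{residue condition} guarantees that no pole sits on the boundary circle $\partial\Dg_r$ (equivalently $i(\Ze+\tfrac12)\cap z\partial\Eg_{\cz(r),\sz(r)}=\emptyset$), and the standing assumption $z\notin i((-\infty,-\tfrac12]\cup[\tfrac12,\infty))$ — which says $|z\cz(uw)|<\tfrac12$ is not forced — ensures likewise that there are no poles on $\Sg^1=\partial\Dg_1$ itself; together these make the deformation legitimate.

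Next I would verify that all these poles are simple, so that the residue of the product factors as claimed. A pole $w_0$ satisfies $z\cz(uw_0)\in i(\Ze+\tfrac12)$ for one or more $u\in\{1,\bxi,\bxi^2\}$; generically only one $u$ is responsible, and $\th(\pi v)$ has simple poles, while $w\mapsto z\cz(uw)$ has nonvanishing derivative away from $w=\pm u^{-1}$ (the critical points of $\cz$), so $\phi_{z,u}$ has a simple pole at $w_0$ and $\prod_{u'}\phi_{z,u'}$ has a simple pole there, with residue $\psi_z(w_0)\,\Res_{w=w_0}\prod_{u'}\phi_{z,u'}(w)$; the cases where two of the $\cz(uw_0)$ simultaneously hit $i(\Ze+\tfrac12)$ I would either exclude by genericity of $z$ on the relevant open set or absorb into the formula (the residue of a product with two simple poles is still a single residue, so \eqref{eq:Gr} is unaffected). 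Then the residue theorem applied to the annulus bounded by $\partial\Dg_1$ (outer, positively oriented) and $\partial\Dg_r$ (inner) gives
\[
F(z)=\int_{\partial\Dg_1}(\cdots)\,dw = \int_{\partial\Dg_r}(\cdots)\,dw + 2\pi i\sum_{w_0}{}'\psi_z(w_0)\,\Res_{w=w_0}\prod_{u'}\phi_{z,u'}(w)=F_r(z)+2\pi i\,G_r(z),
\]
which is \eqref{residue theorem}.

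For the holomorphy claims: $F_r(z)=\int_{\partial\Dg_r}\psi_z(w)\prod_u\phi_{z,u}(w)\,dw$ is an integral over a fixed compact contour of an integrand holomorphic in $z$ whenever no pole touches $\partial\Dg_r$ — i.e. wherever \eqref{residue condition} holds — hence $F_r$ is holomorphic there by differentiation under the integral sign; and since this makes no use of the constraint $z\notin i((-\infty,-\tfrac12]\cup[\tfrac12,\infty))$, $F_r$ extends holomorphically to the full open set in $\C$ defined by \eqref{residue condition}. For $G_r$, on any connected component of the region where \eqref{residue condition} holds the set of poles $w_0$ and the responsible $u$ vary holomorphically (by the implicit function theorem, since $\partial_w(z\cz(uw))\ne 0$ at each simple pole and the poles cannot cross $\partial\Dg_r$ or $\partial\Dg_1$), and each summand $\psi_z(w_0(z))\,\Res_{w=w_0(z)}\prod_{u'}\phi_{z,u'}(w)$ is a holomorphic function of $z$ (an explicit elementary expression in $z$, $w_0(z)$ and $\bxi$); as the sum is finite on each component, $G_r$ is holomorphic. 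Finally, at $z=0$: the integrand of \eqref{eq:Fbis} carries an overall factor $z^3$ from $\prod_u z\cz(uw)$ inside the three $\phi_{z,u}$'s (equivalently $F(z)/z^6$ is bounded near $0$ by Proposition~\ref{pro:holoextRF}(a)), so $F_r(0)=0$; and at $z=0$ the condition $z\cz(uw_0)\in i(\Ze+\tfrac12)$ cannot hold since $0\notin i(\Ze+\tfrac12)$, so the index set of $G_r$ is empty at $z=0$, forcing $G_r(0)=0$. I expect the main technical obstacle to be the careful bookkeeping that the deformation is valid — namely that no poles lie on either bounding circle and that the poles move holomorphically without crossing these circles as $z$ varies within a component of \eqref{residue condition} — together with handling the degenerate coincidences among the three factors $\phi_{z,u}$; once these are pinned down, the residue theorem and differentiation under the integral sign finish everything routinely.
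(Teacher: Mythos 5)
Your proof is correct and follows the paper's own approach for the main identity: deform the contour from $\Sg^1$ to $\partial\Dg_r$, identify the poles in the annulus via the biholomorphism $\cz$ of \eqref{first bijection}--\eqref{second bijection}, and apply the residue theorem. The $F_r(0)=G_r(0)=0$ argument is also the same (the paper writes it as $\phi_{0,u}=0$, you express it as the $z^3$ factor; same thing).

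Where you take a genuinely different route is the holomorphy of $G_r$: the paper dispenses with it in one line by observing that $G_r=(F-F_r)/(2\pi i)$, and since $F$ is holomorphic on $\C\setminus i((-\infty,-\tfrac12]\cup[\tfrac12,\infty))$ (Proposition~\ref{pro:holoextRF}(a)) and $F_r$ is holomorphic where \eqref{residue condition} holds, $G_r$ is automatically holomorphic on the intersection. Your argument via the implicit function theorem, tracking how the poles $w_0(z)$ and the finite index set vary with $z$, can be made to work, but it is both longer and requires you to address the degenerate coincidences you flag — and your hedge there ("exclude by genericity or absorb") is not quite adequate as stated, since the lemma asserts holomorphy on the whole open set, not just a generic subset. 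In fact the coincidences you worry about never occur: by Lemma~\ref{rays and circles}, for $u'\neq u$ in $\{1,\bxi,\bxi^2\}$ the ratio $u'/u\neq\pm1$, so $\phi_{z,u}$ and $\phi_{z,u'}$ have disjoint pole sets in $\Dg_1\setminus\{0\}$ for every fixed $z$. This is the observation the paper uses later (in Lemma~\ref{computation of residues}), and invoking it would close the gap in your approach; but the paper's quotient trick sidesteps the whole issue and is the cleaner move.
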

\begin{prf}
Observe that, by (\ref{first bijection}), the condition that $w_0\in \Dg_1\setminus \overline{\Dg_r}$ and there is $u\in \{1,\bxi,\bxi^2\}$ so that $z\cz(uw_0)\in  i(\Bbb Z+\tfrac{1}{2})$ is equivalent to that there is $u\in \{1,\bxi,\bxi^2\}$ for which $z\cz(uw_0)\in  i(\Bbb Z+\tfrac{1}{2})\cap z(\Eg_{\cz(r), \sz(r)} \setminus [-1,1])$. Formula (\ref{residue theorem}) follows then from (\ref{eq:Fbis}) and the Residue Theorem.

Since $\phi_{z,u}(w)$ and $\psi_z(w)$ are holomorphic functions of the two variables $z$ and $w$, we see from (\ref{second bijection}) that $F_r$ is a holomorphic function of $z$ in the region where (\ref{residue condition}) holds. Also $F$ is holomorphic on $\C\setminus i((-\infty, -\frac{1}{2}]\cup[\frac{1}{2},\infty))$, hence $G_r$ is also holomorphic there.

Since $\phi_{0,u}=0$ we obtain that $F_r(0)=G_r(0)=0$.
\end{prf}
We now derive an explicit formula for the function $G_r$ in (\ref{eq:Gr}).

\begin{lem}\label{computation of residues}
Let  $z$ and $r$ be as in (\ref{residue condition}). Fix $u \in \{1, \bxi, \bxi^2\}$. Let $n_0\in\Bbb Z$ be such that $i(n_0+\frac{1}{2})\in  z(\Eg_{\cz(r), \sz(r)}\setminus [-1,1])$ and let $w_0\in \Dg_1\setminus\overline{\Dg_r}$ be such that $z\cz(uw_0)=i(n_0+\frac{1}{2})$. Then $z\ne 0$,
\begin{equation}\label{computation of residues0}
w_0=\frac{1}{u}\, \cz^{-1}\Big(\frac{i}{z}\big(n_0+\frac{1}{2}\big)\Big)
\end{equation}
and
\begin{equation}\label{computation of residues1}
\Res_{w=w_0}\prod_{u'\in \{1, \bxi, \bxi^2\}}\phi_{z,u'}(w)=\Big(\prod_{u'\ne u}\phi_{z,u'}(w_0)\Big)\cdot
\frac{\frac{i}{z}(n_0+\frac{1}{2})}{i\pi (\sz\circ \cz^{-1})(\frac{i}{z}(n_0+\frac{1}{2}))}.
\end{equation}
Conversely, suppose that $n_0\in\Bbb Z$ is so that $i(n_0+\frac{1}{2})\in  z(\Eg_{\cz(r), \sz(r)}\setminus [-1,1])$, and define $w_0$ by (\ref{computation of residues1}). Then
$w_0\in \Dg_1\setminus\overline{\Dg_r}$ and $z\cz(uw_0)=i(n_0+\frac{1}{2})$.
\end{lem}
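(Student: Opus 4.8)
The statement is essentially a bookkeeping exercise: we must (i) show $z\neq 0$; (ii) solve the equation $z\,\cz(uw_0)=i(n_0+\tfrac12)$ for $w_0$ in the annulus $\Dg_1\setminus\overline{\Dg_r}$, obtaining formula \eqref{computation of residues0}; (iii) compute the residue at $w_0$ of the product $\prod_{u'}\phi_{z,u'}(w)$, obtaining \eqref{computation of residues1}; and (iv) verify the converse. I would carry these out in that order.

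First, $z\neq 0$ is immediate: if $z=0$ the hypothesis $i(n_0+\tfrac12)\in z(\Eg_{\cz(r),\sz(r)}\setminus[-1,1])=\{0\}$ is impossible since $n_0+\tfrac12\neq 0$. For step (ii), from $z\cz(uw_0)=i(n_0+\tfrac12)$ we get $\cz(uw_0)=\tfrac{i}{z}(n_0+\tfrac12)$. The point $\tfrac{i}{z}(n_0+\tfrac12)$ lies in $\Eg_{\cz(r),\sz(r)}\setminus[-1,1]$ by hypothesis, and by the biholomorphism \eqref{first bijection} the map $\cz\colon \Dg_1\setminus\overline{\Dg_r}\to\Eg_{\cz(r),\sz(r)}\setminus[-1,1]$ has inverse $\cz^{-1}$ (Lemma~\ref{scinverse}); hence $uw_0=\cz^{-1}\bigl(\tfrac{i}{z}(n_0+\tfrac12)\bigr)$, which is \eqref{computation of residues0}. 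In particular $w_0\in\Dg_1\setminus\overline{\Dg_r}$ because $uw_0$ lies there and $|u|=1$.

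For step (iii), the only factor in $\prod_{u'}\phi_{z,u'}$ with a pole at $w_0$ is $\phi_{z,u}$ (the other factors $\phi_{z,u'}(w)$ for $u'\neq u$ are holomorphic and nonzero near $w_0$ — holomorphic because $z\cz(u'w_0)\notin i(\Bbb Z+\tfrac12)$, which follows from condition \eqref{residue condition} together with Lemma~\ref{rays and circles}, since the three points $\cz(uw_0),\cz(\bxi u w_0),\cz(\bxi^2 u w_0)$ cannot simultaneously be hit). So $\Res_{w=w_0}\prod_{u'}\phi_{z,u'}(w)=\bigl(\prod_{u'\neq u}\phi_{z,u'}(w_0)\bigr)\cdot\Res_{w=w_0}\phi_{z,u}(w)$. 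Now $\phi_{z,u}(w)=\tfrac{z\cz(uw)}{iw}\th(\pi z\cz(uw))$ and $\th$ has a simple pole at $i(n_0+\tfrac12)$ with $\Res_{v=i(n_0+\frac12)}\th(\pi v)=\tfrac1\pi$. Writing $v=v(w)=z\cz(uw)$ and using $\Res_{w=w_0}g(w)\th(\pi v(w))=\dfrac{g(w_0)}{\pi\, v'(w_0)}$ with $g(w)=\tfrac{z\cz(uw)}{iw}$, and $v'(w)=zu\,\sz(uw)/w$ (from $\cz'=\sz/(\cdot)$, more precisely $\tfrac{d}{dw}\cz(uw)=u\,\sz(uw)/(uw)=\sz(uw)/w$), the factors of $w_0$ cancel: $\dfrac{g(w_0)}{\pi v'(w_0)}=\dfrac{z\cz(uw_0)/(iw_0)}{\pi\, z\,\sz(uw_0)/w_0}=\dfrac{\cz(uw_0)}{i\pi\,\sz(uw_0)}=\dfrac{\tfrac{i}{z}(n_0+\frac12)}{i\pi\,(\sz\circ\cz^{-1})(\tfrac{i}{z}(n_0+\frac12))}$, using $\sz(uw_0)=\sz(\cz^{-1}(\tfrac{i}{z}(n_0+\frac12)))$. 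This is \eqref{computation of residues1}.

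Finally, the converse is just reading the above computation backwards: given $n_0$ with $i(n_0+\tfrac12)\in z(\Eg_{\cz(r),\sz(r)}\setminus[-1,1])$, define $w_0$ by \eqref{computation of residues0}; then $uw_0=\cz^{-1}(\tfrac{i}{z}(n_0+\tfrac12))$ lies in $\Dg_1\setminus\overline{\Dg_r}$ by \eqref{first bijection}, hence so does $w_0$, and applying $\cz$ gives $\cz(uw_0)=\tfrac{i}{z}(n_0+\tfrac12)$, i.e. $z\cz(uw_0)=i(n_0+\tfrac12)$. The main obstacle, such as it is, is the bookkeeping in step (iii): being careful that no two of the three factors $\phi_{z,u'}$ share the pole at $w_0$ (this is where Lemma~\ref{rays and circles} and condition \eqref{residue condition} are genuinely used) and tracking the cancellation of the $w_0$-dependent prefactors so that the residue depends on $n_0$ only through $\tfrac{i}{z}(n_0+\tfrac12)$ and its image under $\sz\circ\cz^{-1}$.
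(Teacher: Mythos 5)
Your proof is correct and follows essentially the same route as the paper's: separate out the residue of the single singular factor $\phi_{z,u}$ using Lemma~\ref{rays and circles} to show the other two factors are holomorphic at $w_0$, then perform the elementary chain-rule computation of $\Res_{w=w_0}\phi_{z,u}(w)$. Two small quibbles that do not affect correctness: you do not need condition~\eqref{residue condition} for the disjointness of singularity sets (Lemma~\ref{rays and circles} alone suffices, via $u'/u\neq\pm1$ for $u\neq u'$ in $\{1,\bxi,\bxi^2\}$), and the clause ``nonzero near $w_0$'' is superfluous since the factorization of the residue only requires the other factors to be holomorphic.
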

\begin{prf}
Since $0\notin i(\Bbb Z+\frac{1}{2})$, we see that $z\ne 0$.
If $u'\ne u$, then Lemma \ref{rays and circles} shows that the set of the singularities of the function $\phi_{z,u'}$ within the punctured  open unit disc $\Dg_1\setminus\{0\}$ is disjoint from the set of the singularities of the function $\phi_{z,u}$. Therefore
\[
\Res_{w=w_0}\prod_{u'}\phi_{z,u'}(w)=\Big(\prod_{u'\ne u}\phi_{z,u'}(w_0)\Big)\cdot \Res_{w=w_0}\phi_{z,u}(w).
\]
Furthermore,
\begin{eqnarray*}
\Res_{w=w_0}\phi_{z,u}(w)&=&\frac{z\cz(uw_0)}{iw_0} \sh(\pi z \cz(uw_0)) \frac{1}{\frac{d}{dw} \ch(\pi w \cz(uz))|_{w=w_0}}\\
&=&\frac{z\cz(uw_0)}{iw_0} \sh(\pi z \cz(uw_0)) \frac{1}{\sh(\pi z \cz(uw_0))\pi z u \sz(uw_0)(uw_0)^{-1}}\\
&=&\frac{z\cz(uw_0)}{i\pi \sz(uw_0)}=\frac{\frac{i}{z}(n_0+\frac{1}{2})}{i\pi (\sz\circ \cz^{-1})(\frac{i}{z}(n_0+\frac{1}{2}))}.
\end{eqnarray*}
The last statement follows immediately from (\ref{first bijection}).
\end{prf}

\begin{lem}
\label{lemma:sumphipsi}
For all $z \in \C$ and $w \in \C^\times$ we have
\begin{eqnarray}\label{lemma:sumphipsi1}
\sum_{u\in \{1,\bxi,\bxi^2\}} \psi_z\big(\tfrac{1}{u} w\big) \Big(\prod_{u'\ne u}\phi_{z,u'}\big(\tfrac{1}{u}w\big)\Big)
=-3 \psi_z(w) \phi_{z,1}(\bxi w)\phi_{z,1}(\bxi^2 w)\,.
\end{eqnarray}
\end{lem}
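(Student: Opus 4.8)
The plan is to reduce every factor to a single even ``one‑variable profile'' and then exploit the cyclic symmetry of the index set $\{1,\bxi,\bxi^2\}$. First I would introduce $g_z(v):=z\,\cz(v)\,\th\big(\pi z\,\cz(v)\big)$, so that by (\ref{phiwu})
\[
\phi_{z,u}(w)=\frac{1}{iw}\,g_z(uw)\qquad (u,w\in\C^\times),
\]
and in particular $\phi_{z,1}(v)=\frac{1}{iv}g_z(v)$. Since $\cz(-v)=-\cz(v)$ and $\th$ is odd, the function $g_z$ is \emph{even}: $g_z(-v)=g_z(v)$. Apart from this, the only property of $\psi_z$ I would use is the transformation rule
\[
\psi_z\big(\tfrac1u w\big)=u^{-2}\,\psi_z(w)\qquad(u\in\{1,\bxi,\bxi^2\}),
\]
which is trivial for $u=1$ and, for $u=\bxi,\bxi^2$, is exactly (\ref{eq:rotation-inv-pithirds}) applied to the rotations $e^{-i\pi/3},e^{-2i\pi/3}$ (legitimate since $1/u=\bar u$).

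Next I would substitute these into the left‑hand side of (\ref{lemma:sumphipsi1}). For $u,u'\in\{1,\bxi,\bxi^2\}$ one has $\phi_{z,u'}\big(\tfrac1u w\big)=\frac{u}{iw}\,g_z\big(\tfrac{u'}{u}w\big)$, hence $\prod_{u'\ne u}\phi_{z,u'}\big(\tfrac1u w\big)=\big(\tfrac{u}{iw}\big)^2\prod_{u'\ne u}g_z\big(\tfrac{u'}{u}w\big)$; multiplying by $\psi_z\big(\tfrac1u w\big)=u^{-2}\psi_z(w)$ the powers of $u$ cancel, and the $u$‑th summand becomes $\frac{\psi_z(w)}{(iw)^2}\prod_{u'\ne u}g_z\big(\tfrac{u'}{u}w\big)$. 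The ratios $u'/u$ occurring are $\bxi^{\pm1}$ and $\bxi^{\pm2}$; since $\bxi^3=e^{i\pi}=-1$ we have $\bxi^{-1}=-\bxi^{2}$ and $\bxi^{-2}=-\bxi$, so by evenness of $g_z$ each of the three products equals $g_z(\bxi w)\,g_z(\bxi^2 w)$, and the left‑hand side of (\ref{lemma:sumphipsi1}) collapses to $\dfrac{3\,\psi_z(w)}{(iw)^2}\,g_z(\bxi w)\,g_z(\bxi^2 w)$.

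Finally I would rewrite the right‑hand side in the same terms: by the formula for $\phi_{z,1}$ above and $\bxi^3=-1$,
\[
\phi_{z,1}(\bxi w)\,\phi_{z,1}(\bxi^2 w)=\frac{g_z(\bxi w)\,g_z(\bxi^2 w)}{(i\bxi w)(i\bxi^2 w)}=\frac{g_z(\bxi w)\,g_z(\bxi^2 w)}{\bxi^3(iw)^2}=-\frac{g_z(\bxi w)\,g_z(\bxi^2 w)}{(iw)^2},
\]
so $-3\,\psi_z(w)\,\phi_{z,1}(\bxi w)\,\phi_{z,1}(\bxi^2 w)$ equals the expression just found for the left‑hand side, which proves (\ref{lemma:sumphipsi1}). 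The identity is one of meromorphic functions of $w\in\C^\times$ for each fixed $z\in\C$ (the poles coming from $\th$), so it is enough to verify it where all terms are regular, as above. There is no genuine obstacle: the one step deserving care is the bookkeeping of sixth roots of unity — systematically reducing each $u'/u$ via $\bxi^3=-1$ and combining this with the evenness of $g_z$ — since this is precisely what forces the three summands to coincide.
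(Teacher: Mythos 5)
Your proposal is correct and takes essentially the same route as the paper: both apply the rotation rule (\ref{eq:rotation-inv-pithirds}) to get $\psi_z(\tfrac{1}{u}w)=u^{-2}\psi_z(w)$, rewrite $\phi_{z,u'}\big(\tfrac{1}{u}w\big)$ as an expression in $\tfrac{u'}{u}w$, and then use $\bxi^3=-1$ together with a parity argument to see that the three summands coincide. Your factoring out the even profile $g_z(v)=z\,\cz(v)\th(\pi z\,\cz(v))$ is only a minor bookkeeping simplification (the paper instead carries the factors $\tfrac{u'}{u}$ inside the product and appeals to the parity of $v\mapsto v\,\phi_{z,1}(vw)$), but the substance of the argument is identical.
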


\begin{proof}
Notice first that
\begin{equation*}
\phi_{z,u'}\big(\tfrac{1}{u}w\big)= \frac{z\cz\big(\tfrac{u'}{u}w\big)}{i\tfrac{1}{u}\,w} \, \th \big(\pi z \cz\big(\tfrac{u'}{u}w\big)\big)
=u' \frac{z\cz\big(\tfrac{u'}{u}w\big)}{i\tfrac{u'}{u}\,w} \, \th \big(\pi z
\cz\big(\tfrac{u'}{u}w\big)\big)
=u' \phi_{z,1}\big(\tfrac{u'}{u}w\big)\,.
\end{equation*}
By  (\ref{eq:rotation-inv-pithirds}), we have
$$\psi_z\big(\tfrac{1}{u} w\big)=\psi_z(w)\, \tfrac{1}{u^2}\,.$$
Hence
\begin{eqnarray}
\sum_u \psi_z\big(\tfrac{1}{u} w\big) \Big(\prod_{u'\ne u}\phi_{z,u'}\big(\tfrac{1}{u}w\big)\Big)
&=& \sum_u \psi_z(w) \tfrac{1}{u^2}  \Big(\prod_{u'\ne u} u' \phi_{z,1}\big(\tfrac{u'}{u}w\big)\Big) \nn\\
&=& \sum_u \psi_z(w)   \Big(\prod_{u'\ne u} \tfrac{u'}{u}\,
\phi_{z,1}\big(\tfrac{u'}{u}w\big)\Big)\,.
\label{eq:simplifyRes}
\end{eqnarray}
Since
\[
-\tfrac{u'}{u}\,  \phi_{z,1}\big(-\tfrac{u'}{u}w\big)=\tfrac{u'}{u}\,  \phi_{z,1}\big(\tfrac{u'}{u}w\big),
\]
we may assume that in the above product, for each fixed $u$,
\[
\tfrac{u'}{u} =\pm \frac{1}{2}+i \frac{\sqrt{3}}{2} \in \{\bxi,\bxi^2\}\,.
\]
Then
\[
\prod_{u'\ne u} \tfrac{u'}{u}\,  \phi_{z,1}\big(\tfrac{u'}{u}w\big)= \bxi \phi_{z,1}(\bxi w) \bxi^2\phi_{z,1}(\bxi^2 w)\,.
\]
Since $\bxi^3=-1$, we conclude that (\ref{eq:simplifyRes}) is equal to the right-hand side of (\ref{lemma:sumphipsi1}).
\end{proof}

\begin{cor}\label{computation of G(w)}
Under the assumptions of Lemma  \ref{general formula},
\begin{eqnarray}\label{computation of G(w)1}
G_r(z)&=&\sum_{n\in S_{r,z}}G_{(n)}(z),
\end{eqnarray}
where
\begin{equation}\label{boundedness3.1}
\Sg_{r,z}=\{n\in \Bbb Z;\ i\big(n+\tfrac{1}{2}\big)\in  z(\Eg_{\cz(r), \sz(r)}\setminus[-1,1])\}
\end{equation}
and
\begin{multline}\label{boundedness3}
G_{(n)}(z)=-3 \psi_z\big(\cz^{-1}(\tfrac{i}{z}(n+\tfrac{1}{2})\big)
\phi_{z,1}\big(\bxi \cz^{-1}(\tfrac{i}{z}(n+\tfrac{1}{2})\big)
\phi_{z,1}\big(\bxi^2 \cz^{-1}(\tfrac{i}{z}(n+\tfrac{1}{2})\big)\\
\times  \frac{\frac{i}{z}(n+\frac{1}{2})}{i\pi (\sz\circ \cz^{-1})(\frac{i}{z}(n+\frac{1}{2}))}\,.
\end{multline}
The function (\ref{boundedness3}) is holomorphic and even on $\C\setminus i\R$.
It satisfies
\begin{eqnarray}\label{boundedness3.6}
G_{(n)}(z)=G_{(-n-1)}(z).
\end{eqnarray}
Moreover, $n \in \Sg_{r,-z}$ if and only if $-n-1 \in \Sg_{r,z}$.
\end{cor}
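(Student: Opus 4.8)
The plan is to obtain the closed form (\ref{computation of G(w)1})--(\ref{boundedness3}) by reorganizing the finite sum (\ref{eq:Gr}) furnished by Lemma~\ref{general formula}, and then to read off holomorphy, evenness and the symmetry (\ref{boundedness3.6}) directly from the explicit shape of $G_{(n)}$. First I would observe that, by Lemma~\ref{computation of residues}, the singularities $w_0 \in \Dg_1 \setminus \overline{\Dg_r}$ appearing in the sum (\ref{eq:Gr}) are exactly the points $\tfrac1u\,\cz^{-1}\big(\tfrac{i}{z}(n+\tfrac12)\big)$ with $u \in \{1,\bxi,\bxi^2\}$ and $n \in \Sg_{r,z}$, and that $(u,n) \mapsto w_0$ is a bijection onto this set. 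Surjectivity and the formula for $w_0$ are Lemma~\ref{computation of residues}; injectivity holds because all singularities of $\phi_{z,u}$ in $\Dg_1 \setminus \{0\}$ lie on $\tfrac1u\,\cz^{-1}\big(\tfrac{i}{z}\R \setminus [-1,1]\big)$, and these three curves are pairwise disjoint by (\ref{rays and circles2}) since $u/u' \notin \{\pm 1\}$. Substituting the residue formula (\ref{computation of residues1}), whose last factor is independent of $u$, I would pull that factor out of the inner sum over $u$; what remains of that sum is precisely the left-hand side of (\ref{lemma:sumphipsi1}) evaluated at $w = \omega_n := \cz^{-1}\big(\tfrac{i}{z}(n+\tfrac12)\big) \in \C^\times$, so Lemma~\ref{lemma:sumphipsi} converts it into $-3\,\psi_z(\omega_n)\,\phi_{z,1}(\bxi\omega_n)\,\phi_{z,1}(\bxi^2\omega_n)$, giving (\ref{computation of G(w)1}) with $G_{(n)}$ as in (\ref{boundedness3}). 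That all evaluations occurring here avoid the poles of $\th$ is again a consequence of (\ref{rays and circles2}), as in the next paragraph.

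Next I would check that the right-hand side of (\ref{boundedness3}), read as a formula in $z$, is holomorphic and even on $\C \setminus i\R$. If $z \notin i\R$ then $\tfrac{i}{z}(n+\tfrac12) \in \C \setminus \R \subset \C \setminus [-1,1]$; hence, by Lemmas~\ref{two square roots} and~\ref{scinverse}, the maps $z \mapsto \omega_n(z)$ and $z \mapsto (\sz\circ\cz^{-1})\big(\tfrac{i}{z}(n+\tfrac12)\big)$ are holomorphic on $\C \setminus i\R$, the latter nowhere zero there (it vanishes only over $\tfrac{i}{z}(n+\tfrac12) = \pm 1$, i.e. $z \in i\R$), and $\psi_z(\omega_n)$ is holomorphic since $\psi_z(w)$ is holomorphic in $(z,w) \in \C \times \C^\times$. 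Thus the only conceivable singularities of $G_{(n)}$ on $\C \setminus i\R$ are poles of $\th$ inside $\phi_{z,1}(\bxi^j\omega_n)$, $j \in \{1,2\}$, i.e. at $z$ with $z\cz(\bxi^j\omega_n) \in i(\Bbb Z+\tfrac12)$; but such a $z$ would force $\bxi^j\omega_n$ to lie simultaneously on $\cz^{-1}\big(\tfrac{i}{z}\R \setminus [-1,1]\big)$ and on $\bxi^j\,\cz^{-1}\big(\tfrac{i}{z}\R \setminus [-1,1]\big)$, contradicting (\ref{rays and circles2}) because $\bxi^j \neq \pm 1$. So $G_{(n)}$ is holomorphic on $\C \setminus i\R$. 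Evenness follows because $z \mapsto -z$ sends $\tfrac{i}{z}(n+\tfrac12)$ to its negative, hence (Lemmas~\ref{two square roots}, \ref{scinverse}) $\omega_n \mapsto -\omega_n$ and $(\sz\circ\cz^{-1})(\cdot) \mapsto -(\sz\circ\cz^{-1})(\cdot)$; using the symmetries (\ref{even phiwu}), (\ref{even phiwu bis}) ($\psi_{-z} = \psi_z$, $\psi_z(-w) = \psi_z(w)$, $\phi_{-z,u} = \phi_{z,u}$, $\phi_{z,u}(-w) = -\phi_{z,u}(w)$), every factor of (\ref{boundedness3}) is restored, the two $\phi_{z,1}$-factors contributing $(-1)^2 = 1$.

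For (\ref{boundedness3.6}), I note that replacing $n$ by $-n-1$ replaces $n+\tfrac12$ by $-(n+\tfrac12)$ --- the \emph{same} sign change on $\tfrac{i}{z}(n+\tfrac12)$ used for evenness --- so the identical factor-by-factor cancellation yields $G_{(-n-1)}(z) = G_{(n)}(z)$. Finally, since the ellipse $\Eg_{\cz(r),\sz(r)}$ and the segment $[-1,1]$ are invariant under $w \mapsto -w$, one has $i(n+\tfrac12) \in (-z)\big(\Eg_{\cz(r),\sz(r)} \setminus [-1,1]\big)$ if and only if $i\big((-n-1)+\tfrac12\big) = -i(n+\tfrac12) \in z\big(\Eg_{\cz(r),\sz(r)} \setminus [-1,1]\big)$, that is, $n \in \Sg_{r,-z}$ iff $-n-1 \in \Sg_{r,z}$.

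The step I expect to be the main obstacle is the holomorphy claim in the second paragraph: one must rule out the ``off-diagonal'' poles $z\cz(\bxi^j\omega_n) \in i(\Bbb Z+\tfrac12)$ \emph{uniformly} as $z$ ranges over all of $\C \setminus i\R$, not merely for a single $z$, and the clean way to do this is to invoke the disjointness (\ref{rays and circles2}) at the level of the whole curves $\cz^{-1}\big(\tfrac{i}{z}\R \setminus [-1,1]\big)$ rather than point by point; the same device is what guarantees that all evaluations in the first paragraph are legitimate. Everything else is organized bookkeeping built on Lemmas~\ref{computation of residues} and~\ref{lemma:sumphipsi}.
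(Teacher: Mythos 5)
Your proof is correct and follows the same route as the paper: it reorganizes the sum (\ref{eq:Gr}) via Lemma~\ref{computation of residues} and Lemma~\ref{lemma:sumphipsi} to obtain (\ref{computation of G(w)1})--(\ref{boundedness3}), and then reads off holomorphy, evenness and (\ref{boundedness3.6}) from the explicit formula using (\ref{even phiwu}), (\ref{even phiwu bis}), $\cz^{-1}(-w)=-\cz^{-1}(w)$ and Lemmas~\ref{two square roots}, \ref{scinverse}. In fact you supply a detail the paper leaves implicit --- ruling out the possible $\th$-poles of $\phi_{z,1}(\bxi^j\omega_n)$ on $\C\setminus i\R$ via (\ref{rays and circles2}) --- where the paper's ``Hence $G_{(n)}$ is holomorphic'' is too terse (and its ``$\frac{i}{z}(n+\frac12)\in\C\setminus i\R$'' should read ``$\in\C\setminus\R$''); your argument is the intended justification.
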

\begin{prf}
The equality (\ref{computation of G(w)1}) is immediate from Lemmas \ref{computation of residues} and \ref{lemma:sumphipsi}.
If $z\in \C\setminus i\R$, then $\frac{i}{z}(n+\frac{1}{2})\in \C\setminus i\R$. Hence $G_{(n)}(z)$ is holomorphic on $\C\setminus i\R$. It is even, because of (\ref{even phiwu}) and (\ref{even phiwu bis}) and since $\cz^{-1}(-w)=-\cz^{-1}(w)$ for $w \in \C\setminus [-1,1]$.
The equality $G_{(n)}(-z)=G_{(-n-1)}(z)$ follows from Lemmas \ref{two square roots} and \ref{scinverse} together with (\ref{even phiwu}) and (\ref{even phiwu bis}). This proves
(\ref{boundedness3.6}) as $G_{(n)}(z)$ is even.
\end{prf}
\begin{cor}\label{boundedness}
Let $W\subseteq \C$ be a connected open set and let  $0<r<1$ be such that
\begin{eqnarray}\label{boundedness1}
i(\Bbb Z+\tfrac{1}{2})\cap W \partial \Eg_{\cz(r),\sz(r)}=\emptyset.
\end{eqnarray}
Then the set $\Sg_r:=\Sg_{r,z}$ defined in (\ref{boundedness3.1}) does not depend on $z\in W\setminus i\R$ and
\begin{eqnarray}\label{boundedness4}
G_r(z)=\sum _{n\in \Sg_r} G_{(n)}(z) \qquad (z\in W\setminus i\R).
\end{eqnarray}
Also,
\begin{eqnarray}\label{boundedness5.1}
n\in \Sg_r\ \ \text{if and only if}\ \ -n-1\in \Sg_r.
\end{eqnarray}
\end{cor}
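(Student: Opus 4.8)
The plan is to establish the three assertions, in order, by (i) a connectedness argument, (ii) an application of Corollary~\ref{computation of G(w)}, and (iii) the central symmetry of $\Eg_{\cz(r),\sz(r)}$ under $w\mapsto -w$. I would begin with two elementary observations. For $z\in\C\setminus i\R$ and $n\in\Bbb Z$ one has $i(n+\tfrac12)\notin z[-1,1]$, since $zt=i(n+\tfrac12)$ with $t\in[-1,1]$ forces $t\neq 0$ and hence $z=i(n+\tfrac12)/t\in i\R$. Consequently, for $z\in W\setminus i\R$ the set in (\ref{boundedness3.1}) reduces to
\[
\Sg_{r,z}=\big\{n\in\Bbb Z:\ \tfrac{i}{z}\big(n+\tfrac12\big)\in\Eg_{\cz(r),\sz(r)}\big\},
\]
which is finite because $\Eg_{\cz(r),\sz(r)}$ is bounded and $i(\Bbb Z+\tfrac12)$ is discrete; in particular the sum in (\ref{boundedness4}) has finitely many terms.

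For the first assertion, fix $n\in\Bbb Z$ and put $A_n=\{z\in W:\ i(n+\tfrac12)\in z\Eg_{\cz(r),\sz(r)}\}$; I would show $A_n$ is simultaneously open and closed in $W$. It is open, since $\Eg_{\cz(r),\sz(r)}$ is open and $z\mapsto i(n+\tfrac12)/z$ is continuous on $W\setminus\{0\}$, while if $0\in W$ a whole neighborhood of $0$ misses $A_n$ (because $0\cdot\Eg_{\cz(r),\sz(r)}=\{0\}\not\ni i(n+\tfrac12)$). For closedness, hypothesis (\ref{boundedness1}) ensures $i(n+\tfrac12)$ never lies on $z\partial\Eg_{\cz(r),\sz(r)}$ for $z\in W$, so $W\setminus A_n=\{z\in W:\ i(n+\tfrac12)\notin z\overline{\Eg_{\cz(r),\sz(r)}}\}$, which is open because $\{z\in\C:\ i(n+\tfrac12)\in z\overline{\Eg_{\cz(r),\sz(r)}}\}$ is the preimage of the compact set $\overline{\Eg_{\cz(r),\sz(r)}}$ under $z\mapsto i(n+\tfrac12)/z$, is bounded away from $0$, and is therefore closed in $\C$. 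Since $W$ is connected, $A_n$ is $\emptyset$ or $W$, so whether $n\in\Sg_{r,z}$ does not depend on $z\in W\setminus i\R$; this defines $\Sg_r$. I expect this to be the one point needing care: $W\setminus i\R$ need not be connected, so the argument must be run on all of $W$ — with (\ref{boundedness1}) forbidding a jump of $A_n$ as $z$ crosses the imaginary axis — and restricted to $W\setminus i\R$ only at the end.

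For (\ref{boundedness4}): every $z\in W\setminus i\R$ satisfies $z\notin i\big((-\infty,-\tfrac12]\cup[\tfrac12,\infty)\big)$ and, by (\ref{boundedness1}), condition (\ref{residue condition}) as well; hence the hypotheses of Lemma~\ref{general formula}, and so of Corollary~\ref{computation of G(w)}, hold for such $z$, and (\ref{computation of G(w)1}) gives $G_r(z)=\sum_{n\in\Sg_{r,z}}G_{(n)}(z)=\sum_{n\in\Sg_r}G_{(n)}(z)$. For (\ref{boundedness5.1}): the ellipse satisfies $\Eg_{\cz(r),\sz(r)}=-\Eg_{\cz(r),\sz(r)}$, so for $z\in W\setminus i\R$ we have $n\in\Sg_r$ iff $\tfrac{i}{z}(n+\tfrac12)\in\Eg_{\cz(r),\sz(r)}$ iff $\tfrac{i}{z}\big((-n-1)+\tfrac12\big)=-\tfrac{i}{z}(n+\tfrac12)\in\Eg_{\cz(r),\sz(r)}$ iff $-n-1\in\Sg_r$; note this uses the symmetry of the ellipse essentially, since $-z$ need not lie in $W$ (so it does not follow directly from the last assertion of Corollary~\ref{computation of G(w)}).
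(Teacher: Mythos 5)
Your proof is correct and follows essentially the same route as the paper's: you show that for each fixed $n$ the set $A_n=\{z\in W:\ i(n+\tfrac12)\in z\Eg_{\cz(r),\sz(r)}\}$ is open and closed in the connected set $W$ (using hypothesis (\ref{boundedness1}) for closedness), conclude it is $\emptyset$ or $W$, then invoke Corollary~\ref{computation of G(w)} and the central symmetry of the ellipse. You supply more detail than the paper does for the open/closed argument and you are right to stress the two subtleties — that the argument must run on all of $W$ since $W\setminus i\R$ may be disconnected, and that (\ref{boundedness5.1}) needs the symmetry $\Eg=-\Eg$ directly rather than the statement $n\in\Sg_{r,-z}\iff -n-1\in\Sg_{r,z}$ (since $-z$ need not lie in $W$) — but these are refinements of the same argument, not a different approach.
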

\begin{prf}
The condition (\ref{boundedness1}) implies that for any fixed $n \in \mathbb Z$ the set
$\{z\in W: i(n+\tfrac{1}{2})\in  z \Eg_{\cz(r), \sz(r)}\}$ is open and closed in $W$.
Since $W$ is connected, this set is either $\emptyset$ or $W$. Hence
\begin{equation}\label{boundedness5}
i(\Bbb Z+\tfrac{1}{2})\cap  z \Eg_{\cz(r), \sz(r)}=i(\Bbb Z+\tfrac{1}{2})\cap  W \Eg_{\cz(r), \sz(r)}
\qquad (z\in W).
\end{equation}
Notice that if $z\in \C\setminus i\R$, then
\[
i(\Bbb Z+\tfrac{1}{2})\cap  z \Eg_{\cz(r), \sz(r)}=i(\Bbb Z+\tfrac{1}{2})\cap  \left(z(\Eg_{\cz(r), \sz(r)}\setminus[-1,1])\right).
\]
Hence, for each $z\in W\setminus i\R$
\begin{eqnarray}\label{boundedness5.00}
\Sg_{r,z} = i(\Bbb Z+\tfrac{1}{2})\cap  W \Eg_{\cz(r), \sz(r)}.
\end{eqnarray}
Therefore (\ref{boundedness4}) follow from Corollary \ref{computation of G(w)}.

Since our ellipses are invariant under the multiplication by $-1$, the formula (\ref{boundedness5.1}) follows from (\ref{boundedness3.1}).
\end{prf}
\begin{lem}\label{connected nbh}
For any $iv\in i\R$ there is a connected neighborhood $W\subseteq\C$ of $iv$ and $0<r<1$ such that the condition (\ref{boundedness1}) holds.
\end{lem}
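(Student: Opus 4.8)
The plan is to reduce the condition at the single point $iv$ to one scalar inequality — exploiting that $iv$ lies on the imaginary axis — choose $r$ accordingly, and then promote the statement to a neighborhood of $iv$ by a compactness argument.

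First I would record the geometric picture. For $0<r<1$ the curve $\partial\Eg_{\cz(r),\sz(r)}=\cz(\partial\Dg_r)$ is the axis-aligned ellipse centered at $0$ with horizontal semi-axis $\cz(r)>1$ and vertical semi-axis $|\sz(r)|>0$. Multiplying it by the purely imaginary number $iv$ rotates it by $\pm\pi/2$ and rescales it, yielding again an axis-aligned ellipse centered at $0$, whose vertical vertices are now $\pm i|v|\cz(r)$. Since an axis-aligned ellipse centered at $0$ meets the imaginary axis exactly at its vertical vertices, and $i(\Bbb Z+\tfrac12)\subseteq i\R$, this gives the equivalence
\[
i(\Bbb Z+\tfrac12)\cap iv\,\partial\Eg_{\cz(r),\sz(r)}=\emptyset
\quad\Longleftrightarrow\quad
|v|\,\cz(r)\notin\Bbb Z+\tfrac12 .
\]
(For $v=0$ the left-hand set is $\{0\}\cap i(\Bbb Z+\tfrac12)=\emptyset$, so that case is trivial with any $r$.) Assuming $v\neq0$, since $\cz\colon(0,1)\to(1,+\infty)$ is a continuous strictly decreasing bijection, the map $r\mapsto|v|\cz(r)$ is injective; hence $\{r\in(0,1):|v|\cz(r)\in\Bbb Z+\tfrac12\}$ is countable and I may fix $r$ in its (dense) complement. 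For this $r$ the point $iv$ itself satisfies $i(\Bbb Z+\tfrac12)\cap iv\,\partial\Eg_{\cz(r),\sz(r)}=\emptyset$.

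It remains to pass from $iv$ to a neighborhood. With $r$ fixed as above, set $B_n=\{z\in\C:\ i(n+\tfrac12)\in z\,\partial\Eg_{\cz(r),\sz(r)}\}=\{\,i(n+\tfrac12)/\xi:\ \xi\in\partial\Eg_{\cz(r),\sz(r)}\,\}$ for $n\in\Bbb Z$; note $0\notin\partial\Eg_{\cz(r),\sz(r)}$, so $B_n$ is the continuous image of a compact set, hence compact, and every $z\in B_n$ satisfies $|z|\ge|n+\tfrac12|/\cz(r)$. Consequently, inside any disc $\{|z|<R\}$ only the finitely many $n$ with $|n+\tfrac12|<R\cz(r)$ contribute, so $\bigcup_n B_n\cap\{|z|<R\}$ is closed. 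Choosing $R>|v|$, the point $iv$ lies in the open set $\{|z|<R\}\setminus\bigcup_nB_n$; let $W$ be an open disc about $iv$ contained in it. Then $W$ is a connected neighborhood of $iv$ with $W\cap B_n=\emptyset$ for all $n$, and unwinding the definition of $B_n$ this says exactly $i(\Bbb Z+\tfrac12)\cap W\,\partial\Eg_{\cz(r),\sz(r)}=\emptyset$, i.e.\ (\ref{boundedness1}).

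The one place that needs care is the choice of $r$: the naive attempt of taking $r$ close to $1$ — which degenerates the ellipse toward $[-1,1]$ and looks natural — fails, because then $|v|\cz(r)\to|v|$ and $|v|$ may itself be a half-integer. The countability/discreteness argument is precisely what produces an admissible $r$ uniformly in $v$; everything else is routine compactness and openness bookkeeping.
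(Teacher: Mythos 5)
Your proof is correct and follows the same strategy as the paper's (extremely terse) proof: first pick $r$ so that the ellipse boundary $iv\,\partial\Eg_{\cz(r),\sz(r)}$ misses $i(\Bbb Z+\tfrac12)$, then "enlarge $iv$ to $W$." You have simply supplied the details the paper leaves implicit, namely the reduction to the scalar condition $|v|\cz(r)\notin\Bbb Z+\tfrac12$ via the observation that the rotated ellipse meets $i\R$ only at its vertices, and the compactness/local-finiteness argument that the set of $z$ violating \eqref{boundedness1} is closed near $iv$.
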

\begin{prf}
Choose $0<r<1$ so that
\[
i(\Bbb Z+\tfrac{1}{2})\cap  iv \partial \Eg_{\cz(r), \sz(r)}=\emptyset.
\]
Then enlarge $iv$ to $W$.
\end{prf}

Lemma \ref{connected nbh} shows that for any point $iv\in i\R$ there is a neighborhood $W$ of that point and a radius $0<r<1$ so that condition (\ref{boundedness1}) is satisfied. Lemma \ref{general formula} and  Corollary \ref{boundedness} show that
\begin{eqnarray}\label{main cor1}
F(z)=F_r(z)+2\pi i\sum_{n\in \Sg_r} G_{(n)}(z) \qquad (z\in W\setminus i\R),
\end{eqnarray}
where $F_r$ extends to a holomorphic function on $W$. Let $N\geq 0$ be the largest element in $\Sg_r$. Then (\ref{boundedness3.1}) and (\ref{boundedness5.1}) show that
$\Sg_r=\{-N-1, -N,..., N-1, N\}$. Moreover, $G_{(n)}=G_{(-n-1)}$ by (\ref{boundedness3.6}).
Hence, (\ref{main cor1}) may be rewritten as
\begin{eqnarray}\label{main cor2}
F(z)&=&F_r(z)+2\pi i\sum_{n=-N-1}^NG_{(n)}(z) \nn\\
&=&F_r(z)+4\pi i\sum_{n=0}^NG_{(n)}(z)
\qquad (z\in W\setminus i\R)\,.
\end{eqnarray}
Notice that, since $\cz(r)>1$, if $iv=i(m+\frac{1}{2})$ with $m\in \mathbb N$, then $m\in \Sg_r$ for all $0<r<1$.

\begin{rem}
\label{rem:r}
By Lemma \ref{general formula}, for a fixed $0<r<1$, the function $F_r(z)$ is holomorphic
on the set of $z\in \C$ for which  (\ref{residue condition}) holds. The intersection of this
set with $i \R$ consists of the points $iv$ for which  $v \cz(r) \notin \mathbb Z+\frac{1}{2}$. The function $F_r(z)$ will be therefore
holomorphic in an open set containing $i (\mathbb Z+\frac{1}{2})$ provided $0<r<1$ is chosen so that
$ (\mathbb Z+\frac{1}{2}) \cap \cz(r)  (\mathbb Z+\frac{1}{2})\neq \emptyset$.
\end{rem}

The functions $G_{(n)}(z)$ are defined and even on $\C\setminus i\R$. In the following section we
fix $n\in \mathbb Z$ and determine a $2$-sheeted Riemann surface $\M_n$ covering $\C\setminus\{0\}$ and branched at $z=\pm i(n+\frac{1}{2})$. Then we prove that $G_{(n)}(z)$ extends as a meromorphic function to $\M_n$, with simple poles at the branching points above
$z=\pm i(n+\frac{1}{2})$.

\subsection{Meromorphic extension of the functions  $G_{(n)}(z)$}
\label{subsection:meroextGn}

The derivative of the map
\[
\C^2\ni (w,\zeta)\mapsto \zeta^2-w^2+1\in \C
\]
is equal to $(-2w,2\zeta)$. This is zero if and only if $w=\zeta=0$. Hence  the map
\[
(\C^2\setminus \{0,0\})\ni (w,\zeta)\mapsto \zeta^2-w^2+1\in \C
\]
is a submersion. Therefore the preimage of zero
\begin{equation}\label{preimageofzero}
\M=\{(w,\zeta);\ \zeta^2=w^2-1\}\subseteq \C^2
\end{equation}
is a complex submanifold. The fibers of the surjective holomorphic map
\begin{equation}\label{coveringMtoC}
\pi: \M\ni (w,\zeta)\mapsto w\in \C
\end{equation}
consist of two points $(w,\zeta)$ and $(w,-\zeta)$, if and only if $w\ne \pm 1$. If $w=\pm 1$, then the fibers consist of one point $(w,0)$.

Let $\Sg\subseteq \C$ and let $\wt \Sg\subseteq \M$ be the preimage of $\Sg$ in $\M$ under the map $\pi$ given by (\ref{coveringMtoC}). We say that a function $\wt f: \wt \Sg\to \C$ is a lift of a $f:\Sg\to \C$ if there is a holomorphic section $\sigma: \Sg \to \wt S$ of the restriction of
$\pi$ to $\wt \Sg$ so that
\[
\wt f(w,\zeta)=f(w) \qquad ((w,\zeta)\in \sigma(\Sg))\,.
\]
We see from Lemma \ref{scinverse} that
\begin{equation}\label{lift of sc}
(\sz\circ \cz^{-1})\,\wt{}:\M\ni (w,\zeta)\mapsto -\zeta\in \C
\end{equation}
is a lift of
\[
\sz\circ \cz^{-1}: \C\setminus [-1,1]\ni w\mapsto -\sqrt{w+1}\sqrt{w-1}\in \C
\]
and
\begin{equation}\label{lift of c}
(\cz^{-1})\,\wt{}:\M\ni (w,\zeta)\mapsto w-\zeta\in \C
\end{equation}
is a lift of
\[
\cz^{-1}: \C\setminus [-1,1]\ni w\mapsto w-\sqrt{w+1}\sqrt{w-1}\in \C\,.
\]
Both maps (\ref{lift of sc}) and (\ref{lift of c}) are holomorphic.

Let
\begin{eqnarray*}
&&\M_{(1,0)}=\{(w,\zeta);\ \zeta^2=w^2-1,\ \Re w>0\},\\
&&\M_{(-1,0)}=\{(w,\zeta);\ \zeta^2=w^2-1,\ \Re w<0\}.
\end{eqnarray*}
Then $\M_{(1,0)}$ is an open neighborhood of $(1,0)\in \M$ and $\M_{(-1,0)}$ is an open neighborhood of $(-1,0)\in \M$. Furthermore the following maps are local charts:
\begin{eqnarray*}
&&\M_{(1,0)}\ni (w,\zeta)\mapsto\zeta\in \C\setminus i\big((-\infty, -1] \cup [1,+\infty)\big),\ \ w=\sqrt{\zeta^2+1},\\
&&\M_{(-1,0)}\ni (w,\zeta)\mapsto\zeta\in \C\setminus  i\big((-\infty, -1] \cup [1,+\infty)\big),\ \ w=-\sqrt{\zeta^2+1}.
\end{eqnarray*}
\begin{lem}\label{lift of grn+}
Fix $n\in \Bbb Z$ and let
\begin{eqnarray*}
\M_{n}&=&\Big\{(z,\zeta)\in \C^\times \times(\C\setminus\{\pm i\});\ \zeta^2=\Big(\frac{i}{z}\big(n+\frac{1}{2}\big)\Big)^2-1\Big\}.
\end{eqnarray*}
The fibers of the map
\begin{eqnarray}\label{lift of grn+1}
\pi_n: \M_{n}\ni (z,\zeta)\mapsto z\in \C^\times
\end{eqnarray}
are $\{(z,\zeta), (z,-\zeta)\}$. In particular
\[
\M_{n}\setminus\big\{\pm \big(i\big(n+\tfrac{1}{2}\big),0)\Big\} \ni (z,\zeta)\mapsto z\in \C^\times \setminus \big\{\pm i\big(n+\tfrac{1}{2}\big)\big\}
\]
is a double cover.

Let $\M_{n,0}=\{(z,\zeta)\in \M_{n};\ z\notin i\R\}$.
The function $G_{(n)}$, (\ref{boundedness3}), lifts
to a holomorphic function $\wt G_{(n)}: \M_{n,0}\to \C$ and then extends to a meromorphic function
$\wt G_{(n)}: \M_{n}\to \C$ by the formula
\begin{eqnarray}\label{tboundedness3}
\wt G_{(n)}(z,\zeta)=
-3 \,\psi_{z}\big(\tfrac{i}{z}(n+\tfrac{1}{2})-\zeta\big)
\phi_{z,1}\big(\bxi\big(\tfrac{i}{z}(n+\tfrac{1}{2})-\zeta\big)\big)
\phi_{z,1}\big(\bxi^2 \big(\tfrac{i}{z}(n+\tfrac{1}{2})-\zeta\big)\big)\\
\cdot \frac{\tfrac{i}{z}(n+\tfrac{1}{2})}{-i\pi\zeta}\,. \nn
\end{eqnarray}
The function $\wt G_{(n)}$ is holomorphic on the set $\M_{n}\setminus\{(i(n+\frac{1}{2}),0), (-i(n+\frac{1}{2}),0)\}$.
It satisfies
\begin{equation}\label{eq:symmetrywtG}
\wt G_{(n)}(-z,-\zeta)=\wt G_{(n)}(z,\zeta)\qquad ((z,\zeta) \in \M_{n}).
\end{equation}

The following sets are open neighborhoods of $(i(n+\frac{1}{2}),0)$ and $(-i(n+\frac{1}{2}),0)$, respectively:
\begin{eqnarray*}
\Ug_{i(n+\frac{1}{2})}&=&\{(z,\zeta)\in \M_{n};\ (n+\tfrac{1}{2})\Im z >0\},\\
\Ug_{-i(n+\frac{1}{2})}&=&\{(z,\zeta)\in \M_{n};\  (n+\tfrac{1}{2})\Im z <0\}.
\end{eqnarray*}
Furthermore the following maps are local charts:
\begin{eqnarray}\label{charts}
\kappa_+:\Ug_{i(n+\frac{1}{2})}\ni (z,\zeta)&\mapsto& \zeta\in \C\setminus  i\big((-\infty, -1] \cup [1,+\infty)\big),\ \ z=\frac{i(n+\frac{1}{2})}{\sqrt{\zeta^2+1}},\\
\kappa_-:\Ug_{-i(n+\frac{1}{2})}\ni (z,\zeta)&\mapsto& \zeta\in \C\setminus  i\big((-\infty, -1] \cup [1,+\infty)\big),\ \ z=-\frac{i(n+\frac{1}{2})}{\sqrt{\zeta^2+1}}.\nn
\end{eqnarray}
The points $(\pm i(n+\frac{1}{2}),0)$ are simple poles of $\wt G_{(n)}$. The local expressions for $\wt G_{(n)}$ in terms of the charts (\ref{charts}) are
\begin{eqnarray}\label{expression in terms of charts 1}
\wt G_{(n)}\circ\kappa_+^{-1}(\zeta)&=&
3\, \psi_{\frac{i(n+\frac{1}{2})}{\sqrt{\zeta^2+1}}}(\sqrt{\zeta^2+1}-\zeta)
\\
&&\times \;
\phi_{{\frac{i(n+\frac{1}{2})}{\sqrt{\zeta^2+1}}},1}\big(\bxi(\sqrt{\zeta^2+1}-\zeta)\big)
\phi_{{\frac{i(n+\frac{1}{2})}{\sqrt{\zeta^2+1}}},1}\big(\bxi^2 (\sqrt{\zeta^2+1}-\zeta)\big)
\frac{\sqrt{\zeta^2+1}}{i\pi\zeta}
\nn
\end{eqnarray}
and
\begin{eqnarray}\label{expression in terms of charts 2}
\wt G_{(n)}\circ\kappa_-^{-1}(\zeta)&=&
-3 \psi_{\frac{i(n+\frac{1}{2})}{\sqrt{\zeta^2+1}}}(\sqrt{\zeta^2+1}+\zeta)\\
&&\times \; \phi_{{\frac{i(n+\frac{1}{2})}{\sqrt{\zeta^2+1}}},1}\big(\bxi(\sqrt{\zeta^2+1}+\zeta)\big)
\phi_{{\frac{i(n+\frac{1}{2})}{\sqrt{\zeta^2+1}}},1}\big(\bxi^2 (\sqrt{\zeta^2+1}+\zeta)\big)\;
\frac{\sqrt{\zeta^2+1}}{i\pi\zeta}.\nn
\end{eqnarray}
Furthermore,
\begin{eqnarray}
\label{eq:residue-n-plus}
2\pi i \, \Res_{\zeta=0}\wt G_{(n)}\circ\kappa_+^{-1}(\zeta)&=&\frac{3}{2\pi} \Big(n+\frac{1}{2}\Big)^2
\psi_{i(n+\tfrac{1}{2})}(1)
\end{eqnarray}
and
\begin{eqnarray}
\label{eq:residue-n-minus}
2\pi i  \, \Res_{\zeta=0}\wt G_{(n)}\circ\kappa_-^{-1}(\zeta)&=&
-\frac{3}{2\pi} \Big(n+\frac{1}{2}\Big)^2
\psi_{i(n+\tfrac{1}{2})}(1)\,.
\end{eqnarray}
\end{lem}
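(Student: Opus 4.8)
The plan follows the pattern of the discussion of $\M$ preceding the lemma. First I would verify that $\M_n$ is a one-dimensional complex submanifold: its defining equation $g(z,\zeta)=\zeta^2-(\tfrac{i}{z}(n+\tfrac12))^2+1=0$ has $\partial_z g=-2(n+\tfrac12)^2z^{-3}\ne 0$ for all $z\in\C^\times$, so $g$ is a submersion. For fixed $z\in\C^\times$ the equation $\zeta^2=(\tfrac{i}{z}(n+\tfrac12))^2-1$ has the two roots $\pm\zeta$, which coincide exactly when $\tfrac{i}{z}(n+\tfrac12)=\pm1$, i.e. $z=\pm i(n+\tfrac12)$; this gives the description of the fibers of $\pi_n$ and shows $\pi_n$ restricts to an unramified double cover over $\C^\times\setminus\{\pm i(n+\tfrac12)\}$, with single preimages $(\pm i(n+\tfrac12),0)$ over the two branch points.

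The computational core is the identity, valid at every point of $\M_n$ with $a:=\tfrac{i}{z}(n+\tfrac12)$ and $w:=a-\zeta$, that $\cz(w)=a$: indeed $w^2=a^2-2a\zeta+\zeta^2=2a^2-2a\zeta-1=2aw-1$, so $w+w^{-1}=2a$. In particular $z\,\cz(w)=i(n+\tfrac12)\in i(\Bbb Z+\tfrac12)$ on all of $\M_n$. By Lemma~\ref{scinverse}, $\cz^{-1}(a)=a-\sqrt{a+1}\sqrt{a-1}$ and $(\sz\circ\cz^{-1})(a)=-\sqrt{a+1}\sqrt{a-1}$; substituting $a-\zeta$ for $\cz^{-1}(a)$ and $-\zeta$ for $(\sz\circ\cz^{-1})(a)$ in (\ref{boundedness3}) produces exactly (\ref{tboundedness3}), and along the holomorphic section $z\mapsto(z,\sqrt{a+1}\sqrt{a-1})$ of $\M_{n,0}$ — well defined for $z\notin i\R$ by Lemma~\ref{two square roots} since there $a\notin\R$ — the right-hand side of (\ref{tboundedness3}) equals $G_{(n)}(z)$. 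Thus (\ref{tboundedness3}) lifts $G_{(n)}$ from $\M_{n,0}$ and extends it to $\M_n$. The symmetry (\ref{eq:symmetrywtG}) follows by plugging $(-z,-\zeta)$ into (\ref{tboundedness3}) and using $\cz^{-1}(-w)=-\cz^{-1}(w)$ (Lemmas~\ref{two square roots} and~\ref{scinverse}) together with the parities (\ref{even phiwu}), (\ref{even phiwu bis}).

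The crucial step, and the only genuinely delicate one, is holomorphy of $\wt G_{(n)}$ on $\M_n\setminus\{(\pm i(n+\tfrac12),0)\}$. In (\ref{tboundedness3}) the factor $\psi_z(a-\zeta)$ is holomorphic and $\tfrac{i}{z}(n+\tfrac12)/(-i\pi\zeta)$ is holomorphic off the two branch points; what must be shown is that $\phi_{z,1}(\bxi w)\,\phi_{z,1}(\bxi^2 w)$ is holomorphic on $\M_n$. I would deduce this from the linear relation $\cz(w)-\cz(\bxi w)+\cz(\bxi^2 w)=0$ (valid since $1-\bxi+\bxi^2=0$ and $1-\bxi^{-1}+\bxi^{-2}=0$), which on $\M_n$ gives $z\,\cz(\bxi^2 w)=z\,\cz(\bxi w)-i(n+\tfrac12)$. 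Hence if $\phi_{z,1}(\bxi w)$ has a pole at a point, i.e. $z\,\cz(\bxi w)\in i(\Bbb Z+\tfrac12)$, then $z\,\cz(\bxi^2 w)\in i\Bbb Z$, so $\th(\pi z\,\cz(\bxi^2 w))=0$ and $\phi_{z,1}(\bxi^2 w)$ vanishes there — and conversely. The two factors are never simultaneously singular (that would force their difference $i(n+\tfrac12)\in i(\Bbb Z+\tfrac12)$ into $i\Bbb Z$), and at a common point the simple pole of $\th$ in one is exactly matched by the zero of $\th$ in the other, since both are governed by the single holomorphic function $z\,\cz(\bxi w)$ and its vanishing order. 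Thus the product is holomorphic, $\wt G_{(n)}$ is holomorphic off the two branch points, and there it has at worst simple poles coming from $1/\zeta$.

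For the remaining assertions: $(i(n+\tfrac12),0)\in\Ug_{i(n+\frac12)}$ because $(n+\tfrac12)\Im(i(n+\tfrac12))=(n+\tfrac12)^2>0$, and the condition $(n+\tfrac12)\Im z>0$ is open, so $\Ug_{i(n+\frac12)}$ is an open neighborhood (similarly $\Ug_{-i(n+\frac12)}$); of the two roots $z=\pm i(n+\tfrac12)/\sqrt{\zeta^2+1}$ of $(\tfrac{i}{z}(n+\tfrac12))^2=\zeta^2+1$ the sign of $(n+\tfrac12)\Im z$ — equivalently the sign of $\pm\Re\sqrt{\zeta^2+1}>0$ — singles out one, so $\kappa_\pm$ are biholomorphisms onto $\C\setminus i((-\infty,-1]\cup[1,\infty))$ with the stated inverses. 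Substituting $z=i(n+\tfrac12)/\sqrt{\zeta^2+1}$, whence $\tfrac{i}{z}(n+\tfrac12)=\sqrt{\zeta^2+1}$ and $\tfrac{i}{z}(n+\tfrac12)-\zeta=\sqrt{\zeta^2+1}-\zeta$, turns (\ref{tboundedness3}) into (\ref{expression in terms of charts 1}); for $\kappa_-$ one has $\tfrac{i}{z}(n+\tfrac12)-\zeta=-(\sqrt{\zeta^2+1}+\zeta)$, and (\ref{expression in terms of charts 2}) follows after applying $\psi_z(-w)=\psi_z(w)$ and $\phi_{z,1}(-w)=-\phi_{z,1}(w)$. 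At $\zeta=0$ one has $\tfrac{i}{z}(n+\tfrac12)=1$ and $z\,\cz(\bxi)=\tfrac{i}{2}(n+\tfrac12)\notin i(\Bbb Z+\tfrac12)$ (likewise for $\bxi^2$), so in (\ref{expression in terms of charts 1}) only $1/\zeta$ contributes a pole, which is therefore simple; evaluating the prefactor at $\zeta=0$ via $\cz(\bxi)=-\cz(\bxi^2)=\tfrac12$, $\th(\tfrac{i\pi}{2}(n+\tfrac12))=i\tan(\tfrac{\pi}{2}(n+\tfrac12))$ and $\bxi^3=-1$ gives $\phi_{i(n+\frac12),1}(\bxi)\,\phi_{i(n+\frac12),1}(\bxi^2)=\tfrac14(n+\tfrac12)^2$, and multiplying the residue by $2\pi i$ yields (\ref{eq:residue-n-plus}). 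Finally (\ref{eq:residue-n-minus}) follows from (\ref{eq:symmetrywtG}), which in the charts reads $\wt G_{(n)}\circ\kappa_-^{-1}(\zeta)=\wt G_{(n)}\circ\kappa_+^{-1}(-\zeta)$, so the residues at $\zeta=0$ are negatives of each other.
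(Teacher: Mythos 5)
Your proof is correct, and for the central step --- holomorphy of $\wt G_{(n)}$ on $\M_n$ away from the two branch points --- you take a genuinely different and notably cleaner route than the paper. The paper argues by first splitting into the cases $\tfrac{i}{z}(n+\tfrac12)\notin[-1,1]$ and $\tfrac{i}{z}(n+\tfrac12)\in[-1,1]$: in the first case it invokes Lemma~\ref{rays and circles} to show no singularity can occur at all; in the second it solves explicitly for all candidate singular points (the formulas~(\ref{requirement5.7.2}) in terms of an auxiliary integer $m$) and then verifies by a direct cosine/sine addition-formula computation that $z\,\cz(\bxi^{3-k}w)\in i\Bbb Z$ there, so the other factor vanishes. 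You instead observe that $1-\bxi+\bxi^2=0$ forces the linear identity $\cz(w)-\cz(\bxi w)+\cz(\bxi^2 w)=0$, which combined with $z\,\cz(w)=i(n+\tfrac12)$ on $\M_n$ yields $z\,\cz(\bxi^2 w)=z\,\cz(\bxi w)-i(n+\tfrac12)$ identically; a pole of one $\phi_{z,1}$ factor (value in $i(\Bbb Z+\tfrac12)$) is then automatically a zero of the other (value in $i\Bbb Z$), with matching order because the two relevant holomorphic functions of the local coordinate differ by a constant. This eliminates the case split and the explicit parametrization of singular points in one stroke. What the paper's route buys is an explicit description of where the potential singularities lie, but since those formulas are never used again, your approach seems strictly preferable here. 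The remaining parts of your argument (the identity $\cz(a-\zeta)=a$ on $\M_n$, the lift via Lemma~\ref{scinverse}, the parity argument for~(\ref{eq:symmetrywtG}), the charts and the residue computation at $\zeta=0$ using $\cz(\bxi)=-\cz(\bxi^2)=\tfrac12$, $\bxi^3=-1$, $\th(\tfrac{i\pi}{2}(n+\tfrac12))=(-1)^n i$) follow the paper's proof essentially verbatim.
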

\begin{prf}
We replace $w$ by $\frac{i}{z}(n+\frac{1}{2})$ in  the discussion preceding the lemma. This explains the structure of the covering (\ref{lift of grn+1}). Formula (\ref{tboundedness3}) follows from that discussion and (\ref{boundedness3}). The function (\ref{tboundedness3}) might be singular at some point $(z,\zeta)$ with $\zeta\ne 0$ if and only if
\begin{equation}\label{requirement1.1mod}
\phi_{z,1}\big(\bxi\big(\tfrac{i}{z}(n+\tfrac{1}{2})-\zeta\big)\big)
\phi_{z,1}\big(\bxi^2 \big(\tfrac{i}{z}(n+\tfrac{1}{2})-\zeta\big)\big)
\end{equation}
is singular at $(z,\zeta)$, $\zeta\ne 0$. Fix $k \in \{1,2\}$ and suppose one term
\begin{equation}\label{requirement1.1.1mod}
\phi_{z,1}\big(\bxi^k \big(\tfrac{i}{z}(n+\tfrac{1}{2})-\zeta\big)\big)
\end{equation}
is singular at $(z,\zeta)$, $\zeta\ne 0$.
Then
\[
z\cz(\bxi^k(\tfrac{i}{z}(n+\tfrac{1}{2})-\zeta))\in i(\Bbb Z+\tfrac{1}{2}),
\]
which means
\begin{equation}\label{requirement1.5mod}
\cz(\bxi^k(\tfrac{i}{z}(n+\tfrac{1}{2})-\zeta))\in \tfrac{i}{z}(\Bbb Z+\tfrac{1}{2}).
\end{equation}
Assume first that
\begin{equation}\label{requirement2}
z\notin i(-\infty, -\left|n+\tfrac{1}{2}\right|]\cup i[\left|n+\tfrac{1}{2}\right|, \infty)
\end{equation}
or equivalently
\begin{equation}\label{requirement3}
\tfrac{i}{z}\left(n+\tfrac{1}{2}\right)\notin [-1,1].
\end{equation}
Then $\cz^{-1}\left(\frac{i}{z}\left(n+\frac{1}{2}\right)\right)$ is in the open unit disc $\Dg_1$ with the zero removed. It follows that
\begin{equation}
\label{eq:inD1}
\bxi^k\cz^{-1}\left(\tfrac{i}{z}\left(n+\tfrac{1}{2}\right)\right)\in \Dg_1\setminus \{0\}=\cz^{-1}(\C\setminus [-1,1])\,.
\end{equation}
Because of (\ref{requirement3}), Lemma \ref{scinverse} implies that
\begin{equation}\label{requirement4}
\tfrac{i}{z}(n+\tfrac{1}{2})-\zeta=\cz^{-1}(\tfrac{i}{z}(n+\tfrac{1}{2}))\ \ \text{or}\ \ \tfrac{i}{z}(n+\tfrac{1}{2})-\zeta=(\cz^{-1}(\tfrac{i}{z}(n+\tfrac{1}{2})))^{-1}.
\end{equation}
In the first case, (\ref{requirement1.5mod}) and (\ref{eq:inD1}) yield
\[
\bxi^k \cz^{-1}\left(\tfrac{i}{z}\left(n+\tfrac{1}{2}\right)\right)\in \cz^{-1}\left( \tfrac{i}{z}\left(\Bbb Z+\tfrac{1}{2}\right)\setminus [-1,1]\right).
\]
Since $\bxi^k\ne \pm 1$, Lemma \ref{rays and circles} shows that this is impossible.  In the second case,
\begin{eqnarray*}
\cz(\bxi^k(\tfrac{i}{z}(n+\tfrac{1}{2})-\zeta))&=&\cz(\bxi^{k}(\cz^{-1}(\tfrac{i}{z}(n+\tfrac{1}{2})))^{-1})
=\cz((\bxi^{-k}\cz^{-1}(\tfrac{i}{z}(n+\tfrac{1}{2})))^{-1})\\
&=&\cz(\bxi^{-k}\cz^{-1}(\tfrac{i}{z}(n+\tfrac{1}{2}))).
\end{eqnarray*}
Hence,  (\ref{requirement1.5mod}) and (\ref{eq:inD1}) yield
\[
\bxi^{-k}\cz^{-1}(\tfrac{i}{z}(n+\tfrac{1}{2}))\in \cz^{-1}\left( \tfrac{i}{z}(\Bbb Z+\tfrac{1}{2})\setminus [-1,1]\right).
\]
Since $\bxi^{-k}\ne \pm 1$, Lemma \ref{rays and circles} shows that this is impossible.

Assume from now on that
\begin{equation}\label{requirement5}
z\in i(-\infty, -\left|n+\tfrac{1}{2}\right|]\cup i[\left|n+\tfrac{1}{2}\right|, \infty)
\end{equation}
or equivalently that
\begin{equation}\label{requirement5.5}
\tfrac{i}{z}\left(n+\tfrac{1}{2}\right)\in [-1,1].
\end{equation}
Let us find all the points  $(z,\zeta)$, $\zeta\ne 0$, where the term
\begin{equation}\label{requirement1.1.1.1}
\phi_{z,1}\big(\bxi^k(\tfrac{i}{z}(n+\tfrac{1}{2})-\zeta)\big)
\end{equation}
is singular. These are the points where (\ref{requirement1.5mod}) holds.

For an integer $m$ let $x_m=\frac{i}{z}\left(m+\frac{1}{2}\right)$. Then there is a unique $\epsilon=\pm 1$ such that
\[
\tfrac{i}{z}(n+\tfrac{1}{2})=x_n\ \ \text{and}\ \ \zeta=-\epsilon i\sqrt{1-x_n^2}.
\]
Hence  (\ref{requirement1.5mod}) is equivalent to the statement that there is $m\in\Bbb Z$  such that
\begin{equation}\label{requirement5.6mod}
\cz(\bxi^k(x_n+\epsilon i\sqrt{1-x_n^2}))=x_m.
\end{equation}
By (\ref{requirement5.5}), the left-hand side of this equation is equal to cosine of some real angle. Hence,
\begin{equation}\label{requirement5.6.1}
x_m\in [-1,1].
\end{equation}
Notice that $\bxi^k=\delta\frac{1}{2}+ i\frac{\sqrt{3}}{2}$, where $\delta=1$ if $k=1$ and  $\delta=-1$ if $k=2$.
Therefore (\ref{requirement5.6mod}) is equivalent to
\begin{eqnarray}\label{requirement5.8}
&&\delta\tfrac{1}{2}x_n-\epsilon\tfrac{\sqrt{3}}{2}\sqrt{1-x_n^2}=x_m\,,
\end{eqnarray}
which implies
\begin{eqnarray}\label{requirement5.8.1}
x_n^2 -\delta x_nx_m+x_m^2=\frac{3}{4}.
\end{eqnarray}
Let $x=\tfrac{i}{z}$. Then (\ref{requirement5.8.1}) is equivalent to
\[
x^2=\frac{3/4}{\left(n+\frac{1}{2}\right)^2 -\delta\left(n+\frac{1}{2}\right)\left(m+\frac{1}{2}\right)+\left(m+\frac{1}{2}\right)^2}
\]
Hence,
\[
1-x_n^2=\frac{\left(\left(m+\frac{1}{2}\right)-\delta\frac{1}{2}\left(n+\frac{1}{2}\right)\right)^2}{\left(n+\frac{1}{2}\right)^2 -\delta\left(n+\frac{1}{2}\right)\left(m+\frac{1}{2}\right)+\left(m+\frac{1}{2}\right)^2}.
\]
The numerator of this fraction cannot be zero. Hence $x_n^2<1$. Thus (\ref{requirement5.5}) is actually equivalent to
\begin{equation}\label{requirement5.5.1}
\tfrac{i}{z}\left(n+\tfrac{1}{2}\right)\in (-1,1).
\end{equation}
Write (\ref{requirement5.8}) as
\[
x\left(\delta\tfrac{1}{2}\left(n+\tfrac{1}{2}\right)-\left(m+\tfrac{1}{2}\right)\right)=
\epsilon\tfrac{\sqrt{3}}{2}\sqrt{1-x_n^2}.
\]
This is equivalent to
\begin{eqnarray}\label{requirement5.7.2}
z&=&-i\frac{2}{\sqrt{3}} \left(2\left(m+\frac{1}{2}\right)-\delta\left(n+\frac{1}{2}\right)\right)
\frac{\sqrt{\left(n+\frac{1}{2}\right)^2 -\delta\left(n+\frac{1}{2}\right)\left(m+\frac{1}{2}\right)+\left(m+\frac{1}{2}\right)^2}}
{\epsilon\left|2\left(m+\frac{1}{2}\right)-\delta\left(n+\frac{1}{2}\right)\right|}\nn\\
\zeta&=&\epsilon i \frac{1}{2}\;\frac{\left|2\left(m+\frac{1}{2}\right)-\delta\left(n+\frac{1}{2}\right)\right|}
{\sqrt{\left(n+\frac{1}{2}\right)^2 -\delta\left(n+\frac{1}{2}\right)\left(m+\frac{1}{2}\right)+\left(m+\frac{1}{2}\right)^2}}.
\end{eqnarray}
It is easy to check that the $z$ given by (\ref{requirement5.7.2}) satisfies the following inequalities:
\begin{eqnarray}\label{requirement5.7.2.1}
|z|>\left|n+\tfrac{1}{2}\right|\ \ \text{and}\ \ |z|>\left|m+\tfrac{1}{2}\right|.
\end{eqnarray}
Hence, (\ref{requirement5.6.1}) actually reads
$
x_m\in (-1,1)
$.
Thus given any integer $m$ there are two points $(z,\zeta), (-z,-\zeta)\in \M_n$ (corresponding to $\epsilon=\pm 1$) such that
(\ref{requirement5.6mod}) holds. This completes our task of finding all the possible singularities of the function (\ref{requirement1.1.1.1}).

We now prove that all these possible singularities are in fact removable.

Suppose $(z,\zeta)$ satisfies (\ref{requirement5.7.2}).
Set
\begin{equation}
\label{eq:w}
w=\tfrac{i}{z} (n+\tfrac{1}{2})-\zeta\,.
\end{equation}
According to (\ref{requirement5.7.2}), there is an integer $m$
so that
\begin{eqnarray}
\label{eq:zzeta}
z \zeta= -\tfrac{1}{\sqrt{3}} \big(2(m+\tfrac{1}{2})-\delta (n+\tfrac{1}{2})\big)\,.
\end{eqnarray}
where $\delta=1$ if $\bxi w$ is a pole of $\phi_{z,1}$ and $\delta=-1$ if $\bxi^2 w$ is a pole of $\phi_{z,1}$.
Choose $k \in \{1,2\}$ so that $\bxi^k w$ is a pole of $\phi_{z,1}$.
Then $\bxi^k=\delta\frac{1}{2}+i\frac{\sqrt{3}}{2}$ and $\bxi^{3-k}=-\delta\frac{1}{2}+i\frac{\sqrt{3}}{2}$.
Hence, by (\ref{requirement5.6mod}),
\begin{equation}
\label{eq:m-for pole}
z \cz(\bxi^k w)=i(m+\tfrac{1}{2})\,.
\end{equation}
Notice that
\begin{eqnarray*}
zw&=&i(n+\tfrac{1}{2})-z\zeta \\
&=& i(n+\tfrac{1}{2}) +\tfrac{1}{\sqrt{3}} (2(m+\tfrac{1}{2})-\delta (n+\tfrac{1}{2})) \\
&=& \tfrac{2}{\sqrt{3}} \left[ \big(i\tfrac{\sqrt{3}}{2}-\tfrac{\delta}{2}\big)(n+\tfrac{1}{2})+(m+\tfrac{1}{2}) \right] \\
&=& \tfrac{2}{\sqrt{3}} (\bxi^{3-k} (n+\tfrac{1}{2})+(m+\tfrac{1}{2}))\,.
\end{eqnarray*}
Using the relation $\sz(\alpha)+\cz(\alpha)=\alpha$, we compute
\begin{eqnarray}
\label{eq:ws-pole}
z \sz(\bxi^k w)&=& z \bxi^k w-z \cz(\bxi^k w) \nn\\
&=& \tfrac{2}{\sqrt{3}} \bxi^k (\bxi^{3-k} (n+\tfrac{1}{2})+(m+\tfrac{1}{2})) -i(m+\tfrac{1}{2}) \nn\\
&=&\tfrac{2}{\sqrt{3}} \big(\tfrac{1}{2}\delta (m+\tfrac{1}{2})-(n+\tfrac{1}{2})\big),
\end{eqnarray}
because $\bxi^3=-1$ and
$\tfrac{2}{\sqrt{3}}  \bxi^k -i= \tfrac{2}{\sqrt{3}} \big(\tfrac{1}{2}\delta+\tfrac{\sqrt{3}}{2}\,i\big)-i =\frac{1}{\sqrt{3}} \, \delta\,.$

We now prove that $z \cz(\bxi^{3-k}w) \in i\mathbb{Z}$.
Recall that for all $\alpha, \beta\in \C^\times$ we have
\begin{eqnarray*}
\cz(\alpha\beta)&=&\cz(\alpha)\cz(\beta)+\sz(\alpha)\sz(\beta)\,,\\
\sz(\alpha\beta)&=&\sz(\alpha)\cz(\beta)+\cz(\alpha)\sz(\beta)\,.
\end{eqnarray*}
Notice also that
$$\cz(\bxi^{3-2k})=\tfrac{1}{2} \quad \text{and}
\quad \sz(\bxi^{3-2k})=i\delta \tfrac{\sqrt{3}}{2}\,.$$
Using (\ref{eq:m-for pole})
and (\ref{eq:ws-pole}), we obtain
\begin{eqnarray*}
z \cz(\bxi^{3-k} w)&=& z \cz(\bxi^{3-2k} \bxi^k w)\\
&=& z \cz(\bxi^{3-2k}) \cz(\bxi^k w)+z \sz(\bxi^{3-2k}) \sz(\bxi^k w)\\
&=& i\tfrac{1}{2} (m+\tfrac{1}{2})+i \delta \big(\tfrac{1}{2}
\delta (m+\tfrac{1}{2})-(n+\tfrac{1}{2})\big)\\
&=& i  (m+\tfrac{1}{2})-i \delta  (n+\tfrac{1}{2}) \in i\mathbb{Z}\,.
\end{eqnarray*}
Thus $\phi_{z,1}(\bxi^{3-k} w)=\phi_{z,1}\big(\bxi^{3-k} (\frac{i}{z}(n+\frac{1}{2})-\zeta)\big)=0$
and the function $\wt G_{(n)}$ extends to be holomorphic near
$(z,\zeta)\in \M_n$.

The symmetry property (\ref{eq:symmetrywtG}) as well as the local expressions of $\wt G_{(n)}$ in terms of the charts are immediate from (\ref{tboundedness3})
and the parity properties of $\psi_z$ and $\phi_{z,1}$ in (\ref{even phiwu}) and  (\ref{even phiwu bis}).

The first residue at $\zeta=0$ is equal to
\begin{eqnarray*}
\Res_{\zeta=0}\wt G_{(n)}\circ\kappa_+^{-1}(\zeta)&=&
3\psi_{i(n+\frac{1}{2})}(1)\phi_{i(n+\frac{1}{2}),1}(\bxi) \phi_{i(n+\frac{1}{2}),1}(\bxi^2)
\frac{1}{i\pi}\\
&=& -\frac{3}{4i\pi} \left(n+\frac{1}{2}\right)^2 \psi_{i(n+\frac{1}{2})}(1) \,\th^2\Big(\frac{i\pi}{2} \big(n+\frac{1}{2}\big) \Big),
\end{eqnarray*}
which proves the result as
$\th\big(\frac{\pi i}{2}(n+\tfrac{1}{2}) \big)=(-1)^n i$.
One computes the second residue similarly, using (\ref{even phiwu}).
\end{prf}

Recall that the residues may depend on the choice of the chart. However the type of the singularity does not.

\begin{rem}
\label{rem:principal branch wtG}
Let $n\in \mathbb N$ be fixed. By Lemma  \ref{two square roots},
\begin{equation}\label{eq:sigman+}
\sigma_n^+: \C \setminus i\big((-\infty, -(n+\tfrac{1}{2})]\cup [n+\tfrac{1}{2}, +\infty)\big) \ni z \mapsto (z,\zeta_n^+(z)) \in \M_n\,,
\end{equation}
where
\begin{equation}\label{eq:zetan+}
\zeta_n^+(z)=\sqrt{\tfrac{i}{z}(n+\tfrac{1}{2})+1} \sqrt{\tfrac{i}{z}(n+\tfrac{1}{2})-1}\,,
\end{equation}
is a holomorphic section of the projection map (\ref{lift of grn+1}) so that
\begin{equation}\label{eq:wtG zetan+}
\wt G_{(n)} \circ \sigma_n^+ =G_{(n)}\,.
\end{equation}
This in particular implies that $G_{(n)}$ is holomorphic on $\C \setminus i\big((-\infty, -(n+\tfrac{1}{2})]\cup [n+\tfrac{1}{2}, +\infty)\big)$ and that (\ref{eq:wtG zetan+}) extends to this domain by analyticity.
The image of $\sigma_n^+$ is usually refereed to as the physical sheet  (or principal sheet) of $\M_n$.  The image in $\M_n$ of the map
\begin{equation*}
\sigma_n^-: \C \setminus i\Big((-\infty, -(n+\tfrac{1}{2})]\cup [(n+\tfrac{1}{2}), +\infty)\Big) \ni z \mapsto (z,\zeta^-_n(z)) \in \M_n
\end{equation*}
is the nonphysical sheet. In the above equation, we have set
\begin{equation}\label{eq:zetanminus}
\zeta_n^-(z)= -\zeta_n^+(z)\,.
\end{equation}
For $v \in \R^+$ let $\zeta_n^\pm(-iv)=\zeta_n^\pm(-iv+0)$. Then, by Lemma \ref{scinverse},
\begin{equation}\label{eq:zetan+-iv}
\zeta_n^+(-iv)=
\begin{cases}
\sqrt{\big(\tfrac{n+1/2}{v}\big)^2-1} &\text{if $0<v\leq n+\tfrac{1}{2}$\,,}\\
 i\sqrt{1-\big(\tfrac{n+1/2}{v}\big)^2} &\text{if $v> n+\tfrac{1}{2}$\,,}\\
\end{cases}
\end{equation}
is the physical lift of $-i\R^+$ in $\M_n$. The lift of $-i\R^+$ in $\M_n$ is the branched curve $\R^+\ni v \mapsto \zeta_n^\pm(-iv)$. It is represented in Figure 1.

\begin{center}
\begin{figure}
\vskip -3.5truecm

\includegraphics[width=16cm]{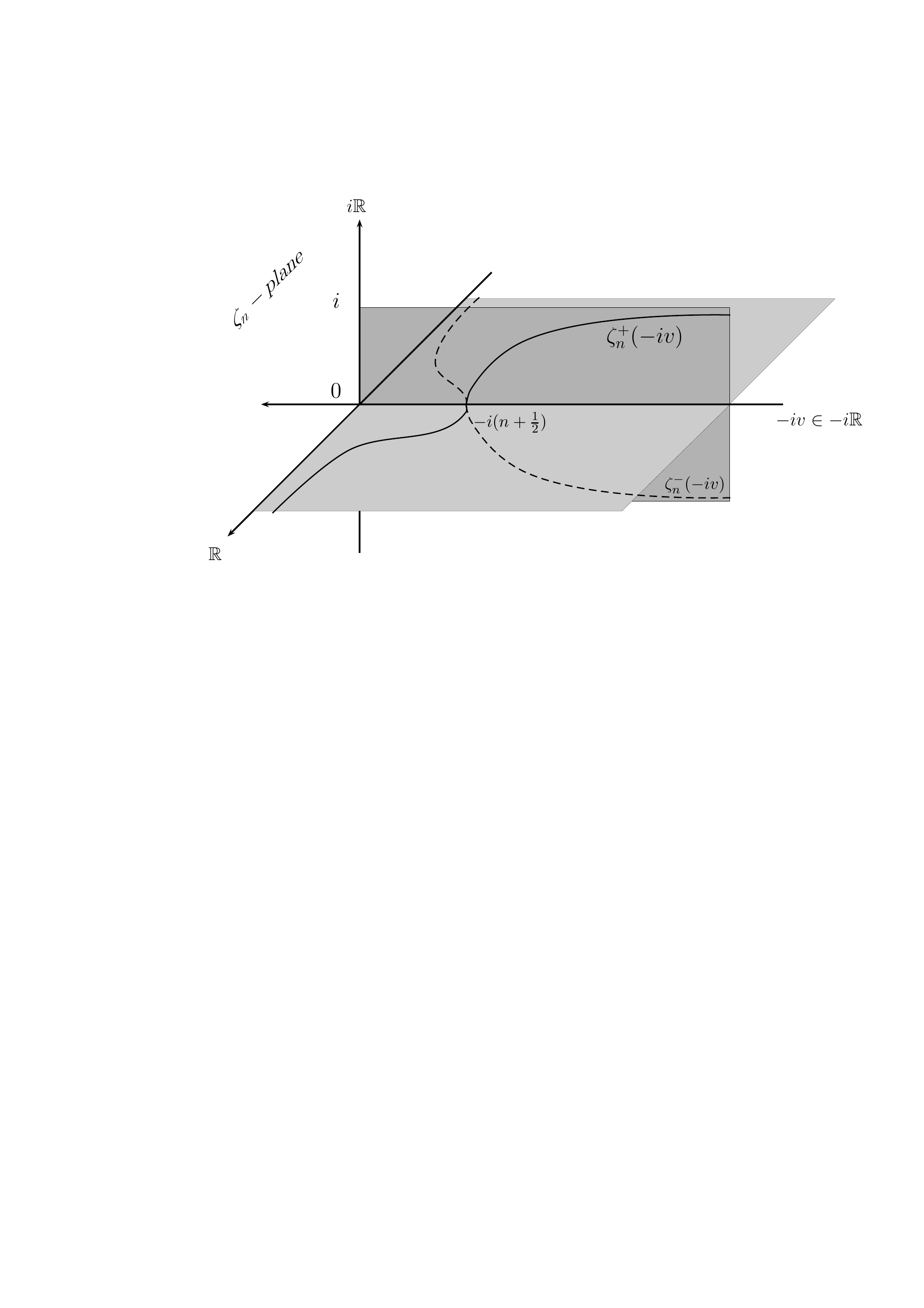}\qquad \null

\vskip -13.5truecm

\caption{The branched curve $v \in \R^+ \to  \pm\zeta_n^+(-iv)$}
\end{figure}
\end{center}

\end{rem}


\subsection{Meromorphic extension of $F$}
\label{subsection:mero-ext-F}

According to (\ref{main cor2}), locally, in a neighborhood $W$ of each point  $-iv \in -i\R^+$, the function $F$ can be written as
\begin{equation}
\label{eq:Fresidues1}
F(z)=F_r(z)+4\pi i\sum_{n=0}^N G_{(n)}(z)
\qquad (z\in W\setminus i\R)\,,
\end{equation}
where $0<r<1$ and $N$ depend $v$, and $F_r$ is holomorphic in $W$.

In this subsection we determine a meromorphic extension of $F$ above $-i\R^+$ by ``putting together" the meromorphic extensions of the functions $G_{(n)}(z)$ determined in Lemma \ref{lift of grn+}.
To do this, we need more precise information on the parameters $r$ and $N$ occurring in
(\ref{eq:Fresidues1}).
\begin{lem}
\label{lemma:upper-limit-sum}
Fix $N\in \mathbb N$. Suppose $0<r<1$ is chosen so that $\cz(r)<1+\frac{1}{2N+3}$.
Then for every $0<v<  N+\tfrac{3}{2}$ there is an open neighborhood $W_v$ of $-iv$ so that
$$\Sg_{r,z}\cap \mathbb N=\{n\in \mathbb N; -i(n+\tfrac{1}{2})\in W_v \Eg_{\cz(r),\sz(r)}\}=\{0,1,\dots,N_v\}  \qquad (z\in W_v\setminus i\R)\,,$$
where $N_v\leq \floor{v}$. Here $\floor{v}$ denotes the largest integer less than or equal to $v$.

Furthermore, if $\floor{v}+\frac{1}{2}\leq v$ then $N_v= \floor{v}$.
If $v<\floor{v}+\frac{1}{2}$ and we choose $0<r=r_v<1$ so that it satisfies the additional condition $\cz(r_v)<\frac{\floor{v}+\frac{1}{2}}{v}$, then $N_v= \floor{v}-1$.
\end{lem}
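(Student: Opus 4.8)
The plan is to translate the defining condition of $\Sg_{r,z}$ into an elementary inequality in $n$, $v$ and $\cz(r)$, and then to extract $N_v$ by arithmetic with the two hypotheses $\cz(r)<1+\frac{1}{2N+3}$ and $v<N+\frac32$.

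First I would note that for any $z\notin i\R$ the segment $z[-1,1]$ meets $i\R$ only at $0$, hence is disjoint from $i(\Ze+\tfrac12)$; therefore $n\in\Sg_{r,z}$ precisely when $\tfrac{i}{z}(n+\tfrac12)\in\Eg_{\cz(r),\sz(r)}$. Assume for the moment that $v\cz(r)\notin\Ze+\tfrac12$. Then $-iv$ lies on none of the finitely many curves $\{z\in\C^\times:\tfrac{i}{z}(m+\tfrac12)\in\partial\Eg_{\cz(r),\sz(r)}\}$ passing near it, so a small disc $W_v$ about $-iv$ satisfies condition (\ref{boundedness1}); as in the proof of Corollary~\ref{boundedness}, for each integer $n$ the property ``$\tfrac{i}{z}(n+\tfrac12)\in\Eg_{\cz(r),\sz(r)}$'' is then constant on $W_v$, and evaluating at $z=-iv$, where $\tfrac{i}{z}(n+\tfrac12)=-\tfrac{n+1/2}{v}$ is real, it holds exactly when $n+\tfrac12<v\cz(r)$. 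Thus $\Sg_{r,z}\cap\mathbb N=\{n\in\mathbb N:n+\tfrac12<v\cz(r)\}$ for all $z\in W_v\setminus i\R$, an initial segment $\{0,1,\dots,N_v\}$ with $N_v=\max\{n\in\mathbb N:n+\tfrac12<v\cz(r)\}$ (and $N_v=-1$, the set being empty, if no such $n$ exists). The identification with $\{n\in\mathbb N:-i(n+\tfrac12)\in W_v\Eg_{\cz(r),\sz(r)}\}$ comes from (\ref{boundedness5}) and the invariance of $\Eg_{\cz(r),\sz(r)}$ under $w\mapsto-w$.

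For the bound $N_v\le\floor v$ I would put $\cz(r)=1+\delta$, $0<\delta<\frac{1}{2N+3}$; since $v<N+\frac32=\frac{2N+3}{2}$ this gives $v\delta<\frac12$, so $v\cz(r)<v+\frac12$, and $n+\frac12<v\cz(r)$ forces $n<v$, i.e.\ $n\le\floor v$. If moreover $\floor v+\frac12\le v$ (the first ``Furthermore'' case), the same estimates confine $v\cz(r)$ to the interval $(\floor v+\frac12,\floor v+\frac32)$: the left endpoint because $v\cz(r)>v\ge\floor v+\frac12$, the right one because $v<\floor v+1$ and $\cz(r)<1+\frac{1}{2N+3}<1+\frac{1}{2\floor v+2}$, valid since $\floor v\le N$ here. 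In particular $v\cz(r)\notin\Ze+\frac12$ and the first part applies; $\floor v$ satisfies $\floor v+\frac12<v\cz(r)$ but $\floor v+1$ does not, so $N_v=\floor v$. If instead $v<\floor v+\frac12$ (the second case), then $\frac{\floor v+1/2}{v}>1$ and I can pick $r_v\in(0,1)$ with $1<\cz(r_v)<\min\{1+\frac{1}{2N+3},\ \frac{\floor v+1/2}{v}\}$ and, by avoiding the finitely many values $\frac{m+1/2}{v}$ in that range, also $v\cz(r_v)\notin\Ze+\frac12$; using $r_v$ in place of $r$ the first part applies once more, $v\cz(r_v)<\floor v+\frac12$ forces $n\le\floor v-1$ while $\floor v-\frac12<v<v\cz(r_v)$ shows $\floor v-1$ lies in the set when $\floor v\ge1$ (the set being empty when $\floor v=0$), so $N_v=\floor v-1$.

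The delicate point --- the main obstacle --- is the finitely many ``resonant'' $v$ with $v\cz(r)\in\Ze+\frac12$: for these the point $i(m+\frac12)$ with $m+\frac12=v\cz(r)$ crosses the boundary $\partial(z\,\Eg_{\cz(r),\sz(r)})$ as $z$ passes $-iv$, so $\Sg_{r,z}$ is not locally constant and no $W_v$ works for the originally fixed $r$. The way out is that every such $v$ falls under the second case above: $\cz(r)>1$ gives $v=\frac{m+1/2}{\cz(r)}<m+\frac12$, while $m+\frac12=v\cz(r)<(N+\frac32)(1+\frac{1}{2N+3})=N+2$ gives $m\le N+1$, hence $m\delta<\frac12$ and $m<v$; so $\floor v=m$ and $v<\floor v+\frac12$, and one is free to shrink the radius to $r_v$, which pushes the offending integer strictly outside the ellipse and restores local constancy. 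Everything else is routine bookkeeping with these two linear inequalities.
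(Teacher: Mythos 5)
Your proof is correct and follows the same route as the paper's: both reduce membership to the scalar inequality $n+\tfrac12<v\cz(r)$ (via the observation that $iu\in iv\Eg_{\cz(r),\sz(r)}$ iff $u<v\cz(r)$) and then extract $N_v$ by elementary arithmetic with $\cz(r)<1+\tfrac1{2N+3}$ and $v<N+\tfrac32$. What you do that the paper does not is explicitly flag and resolve the resonant values $v$ with $v\cz(r)\in\Ze+\tfrac12$, for which no choice of $W_v$ makes $\Sg_{r,z}$ locally constant at the \emph{fixed} $r$; the paper disposes of this only implicitly, by citing Lemma~\ref{connected nbh}, which in fact furnishes a new $r$ depending on $v$ rather than the one fixed in the hypothesis. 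Your observation that every resonant $v$ automatically satisfies $v<\floor v+\tfrac12$, so that re-choosing $r=r_v$ is already sanctioned by the second ``Furthermore'' clause, makes this step rigorous. One small slip: in that step you deduce $m<v$ from $m\delta<\tfrac12$, but the inequality you actually need is $v\delta<\tfrac12$ (since $v=m+\tfrac12-v\delta$); this follows at once from $v<N+\tfrac32$ and $\delta<\tfrac1{2N+3}$, exactly as in your bound $N_v\le\floor v$, so the conclusion is unaffected.
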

\begin{prf}
The ellipse $\Eg_{\cz(r),\sz(r)}$ being invariant under sign change, we can work with
$iv$ instead of $-iv$.

Let $iu\in i\R^+$. Then $iu \in iv \Eg_{\cz(r),\sz(r)}$ if and only if $u<\cz(r) v$. This shows that
$i[u,+\infty) \cap iv \Eg_{\cz(r),\sz(r)}=\emptyset$ if $iu \notin iv\Eg_{\cz(r),\sz(r)}$. Moreover,
since $\cz(r)>1$, we have that $i[0,v] \subseteq iv \Eg_{\cz(r),\sz(r)}$.
In particular,
\begin{equation}\label{eq:integerv-in-Sr}
i\big(\floor{v}+\tfrac{1}{2}\big)\in iv \Eg_{\cz(r),\sz(r)}
\end{equation}
 if $\floor{v}+\frac{1}{2}\leq v$.
Notice also that  $iu \notin iv \overline{\Eg_{\cz(r),\sz(r)}}$ if and only if $u>\cz(r) v$.

Suppose $\cz(r)<1+\frac{1}{2N+3}$ and let $0<v<  N+\tfrac{3}{2}$. Then
$1+\frac{1}{2v}\geq 1+\frac{1}{2N+3} > \cz(r)$.
So $(\floor{v}+1)+\frac{1}{2}>v+\frac{1}{2}> v\cz(r)$, i.e.
\begin{equation}\label{eq:integerv-notin-Sr}
 i\big((\floor{v}+1)+\tfrac{1}{2})\notin iv \overline{\Eg_{\cz(r),\sz(r)}}\,.
\end{equation}
The relations (\ref{eq:integerv-in-Sr}) and (\ref{eq:integerv-notin-Sr}) still hold when we replace $iv \Eg_{\cz(r),\sz(r)}$
with $z \Eg_{\cz(r),\sz(r)}$ with $z$ in a sufficiently small neighborhood $W$ of $iv$. We also take $W$ small enough so that
$\Sg_{r,z}$ is independent of $z \in W\setminus i\R$; see Corollary \ref{boundedness} and Lemma \ref{connected nbh}.
The extension of (\ref{eq:integerv-notin-Sr}) obtained in this way shows that $N_v\leq  \floor{v}$.
Thus $N_v= \floor{v}$ if $\floor{v}+\frac{1}{2}\leq v$.

If $v<\floor{v}+\frac{1}{2}$, then we can choose $0<r=r_v<1$ so that it also satisfies the condition $\cz(r_v)<\frac{\floor{v}+\frac{1}{2}}{v}$. So $v\cz(r_v)<\floor{v}+\frac{1}{2}$ yields $N_v<\floor{v}$. Thus
$N_v=\floor{v}-1$ in this case.
\end{prf}

\begin{cor}
\label{cor:FextendedWv}
Let $N\in \mathbb N$ and $m\in \{0,1,\dots,N\}$. Let $v \in \big[m+\frac{1}{2}, m+\frac{3}{2}\big)$. Then there is an open neighborhood $W_v$ centered at $-iv$ and $0<r_v<1$
so that
 \begin{equation}
\label{eq:Fresidues22}
F(z)=F_{r_v}(z)+4\pi i\sum_{n=0}^{m} G_{(n)}(z)
\qquad (z\in W_v\setminus i\R)\,,
\end{equation}
where $F_{r_v}$ is holomorphic in $W_v$ and $G_{(n)}$ is the function defined in (\ref{boundedness3}).
\end{cor}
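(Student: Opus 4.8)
The plan is to deduce the statement directly from Lemma~\ref{lemma:upper-limit-sum} together with the local expansion (\ref{main cor2}); the only genuine content is to arrange that the truncation index $N_v$ supplied by Lemma~\ref{lemma:upper-limit-sum} equals the prescribed $m$.

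First I would fix $N\in\mathbb N$ and require that every $r_v$ produced below satisfies $\cz(r_v)<1+\tfrac1{2N+3}$, so that Lemma~\ref{lemma:upper-limit-sum} applies for all the values $v\in[m+\tfrac12,m+\tfrac32)\subseteq(0,N+\tfrac32)$ under consideration. Then I would split according to $\floor v$. If $v\in[m+\tfrac12,m+1)$, then $\floor v=m$ and $\floor v+\tfrac12\le v$, so Lemma~\ref{lemma:upper-limit-sum} gives directly an $r_v\in(0,1)$ and an open $W_v\ni-iv$ with $\Sg_{r_v,z}\cap\mathbb N=\{0,\dots,\floor v\}=\{0,\dots,m\}$ for $z\in W_v\setminus i\R$. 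If $v\in[m+1,m+\tfrac32)$, then $\floor v=m+1$ and $v<\floor v+\tfrac12$, so the last assertion of Lemma~\ref{lemma:upper-limit-sum} applies once $r_v$ is chosen to satisfy the extra condition $\cz(r_v)<\tfrac{\floor v+1/2}{v}=\tfrac{m+3/2}{v}$; since $v<m+\tfrac32$ this upper bound exceeds $1$, and as $\cz$ decreases from $+\infty$ to $1$ on $(0,1)$ there is indeed an $r_v\in(0,1)$ meeting this and the previous constraint, and then $N_v=\floor v-1=m$. In both cases one ends up with some $r_v\in(0,1)$ and a connected open $W_v\ni-iv$ for which the largest element of $\Sg_{r_v,z}$ is $m$ for all $z\in W_v\setminus i\R$; note that $m\in\Sg_{r_v,z}$ there in any case, because $\cz(r_v)>1$ forces $i(m+\tfrac12)\in z\,\Eg_{\cz(r_v),\sz(r_v)}$ for $z$ near $-iv$, which re-confirms $N_v\ge m$.

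Next I would apply (\ref{main cor2}) with this choice $r=r_v$, $W=W_v$. By Lemma~\ref{connected nbh} and Corollary~\ref{boundedness} the set $\Sg_{r_v}:=\Sg_{r_v,z}$ is independent of $z\in W_v\setminus i\R$, by (\ref{boundedness5.1}) it is symmetric about $-\tfrac12$, hence $\Sg_{r_v}=\{-m-1,\dots,m\}$, and by Lemma~\ref{general formula} the function $F_{r_v}$ is holomorphic on $W_v$ since condition (\ref{boundedness1}) holds there. Using $G_{(n)}=G_{(-n-1)}$ from (\ref{boundedness3.6}), formula (\ref{main cor2}) becomes
\[
F(z)=F_{r_v}(z)+4\pi i\sum_{n=0}^{N_v}G_{(n)}(z)=F_{r_v}(z)+4\pi i\sum_{n=0}^{m}G_{(n)}(z)\qquad(z\in W_v\setminus i\R),
\]
which is (\ref{eq:Fresidues22}); here each $G_{(n)}$, $0\le n\le m$, is holomorphic on $\C\setminus i\R\supseteq W_v\setminus i\R$ by Corollary~\ref{computation of G(w)}.

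The one point that needs care is the endpoint $v=m+\tfrac12$, where $-iv$ is a branch point of $\wt G_{(m)}$ and yet $W_v$ is required to be a genuine neighbourhood of $-i(m+\tfrac12)$ on which $F_{r_v}$ is holomorphic. This amounts to arranging condition (\ref{boundedness1}) for a $W_v$ containing $-i(m+\tfrac12)$, which is done exactly as in the proof of Lemma~\ref{connected nbh}: an arbitrarily small adjustment of $r_v$, keeping $\cz(r_v)$ inside the open range identified above, ensures $(m+\tfrac12)\cz(r_v)\notin\mathbb Z+\tfrac12$, and then (\ref{boundedness1}) persists on a small enough $W_v$. Beyond this, nothing quantitative is involved: all the constraints on $r_v$ reduce to placing $\cz(r_v)$ in a non-empty open subinterval of $(1,\infty)$, so I do not expect any genuine obstacle.
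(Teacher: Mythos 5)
Your proof is correct and follows essentially the same route as the paper's: both apply Lemma~\ref{lemma:upper-limit-sum} and split according to whether $v\in[m+\tfrac12,m+1)$ (where $\floor v=m$ and the first case gives $N_v=m$) or $v\in[m+1,m+\tfrac32)$ (where $\floor v=m+1$ and the second case gives $N_v=m$), then plug into (\ref{main cor2}). You spell out more details than the paper's two-line argument — in particular the feasibility of the constraints on $\cz(r_v)$ and the holomorphy of $F_{r_v}$ at the branch point $-i(m+\tfrac12)$ — but these are implicit in the paper's reliance on Lemma~\ref{general formula}, Corollary~\ref{boundedness}, and Lemma~\ref{connected nbh}, so there is no substantive difference.
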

\begin{prf}
This is immediate from Lemma \ref{lemma:upper-limit-sum}. Indeed, for
$v \in  \big[m+\frac{1}{2}, m+1\big)=\big[\floor{v}+\frac{1}{2}, \floor{v}+1\big)$ we have $N_v=\floor{v}=m$,
whereas for $v \in \big[m+1, m+\frac{3}{2}\big)=\big[\floor{v}, \floor{v}+\frac{1}{2}\big)$ we have $N_v=\floor{v}-1=(m+1)-1=m$.
\end{prf}

In the following we suppose that we have fixed the $W_v$ as in Corollary
\ref{cor:FextendedWv}. Moreover, by possibly further shrinking them, we may also assume that
every $W_v$ is an open disk centered at $-iv$ and that
\begin{eqnarray}
W_v\cap i\R  \subseteq &-i\big(m+\tfrac{1}{2}, m+\tfrac{3}{2}\big)\qquad &\text{if $v\in \big(m+\tfrac{1}{2}, m+\tfrac{3}{2}\big)$\,,}
\label{eq:assWvint}\\
W_v\cap i\R  \subseteq &-i(m, m+1) \hfill \qquad &\text{if $v=m+\tfrac{1}{2}$}\,.
\label{eq:assWvbd}
\end{eqnarray}

\begin{cor}
\label{cor:forFm}
Let $m\in \{0,1,\dots,N\}$ and $v \in \big[m+\frac{1}{2}, m+\frac{3}{2}\big)$.
Suppose that $W_v\cap W_{v'}\neq \emptyset$ for some $v'\in \big[\frac{1}{2}, N+\frac{3}{2}\big)$.

If $v \in \big(m+\frac{1}{2}, m+\frac{3}{2}\big)$, then $v' \in \big[m+\frac{1}{2}, m+\frac{3}{2}\big)$. Moreover,
$$F_{r_{v'}}(z)=F_{r_v}(z) \qquad (z \in W_v\cap W_{v'}).$$

If $v=m+\frac{1}{2}$,  then $v' \in \big(m-\frac{1}{2}, m+\frac{3}{2}\big)$.
Moreover, if $v' \in \big[m+\frac{1}{2}, m+\frac{3}{2}\big)$, then
$$F_{r_{v'}}(z)=F_{r_v}(z) \qquad (z \in W_v\cap W_{v'});$$
if $v' \in \big(m-\frac{1}{2}, m+\frac{1}{2}\big)$, then
$$F_{r_{v'}}(z)=F_{r_v}(z)+4\pi i \, G_{(m)}(z) \qquad (z \in W_v\cap W_{v'}).$$
\end{cor}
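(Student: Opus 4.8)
The approach is to compare, on the overlap $W_v\cap W_{v'}$, the two local presentations of $F$ furnished by (\ref{eq:Fresidues22}), and then to propagate the resulting identity from $(W_v\cap W_{v'})\setminus i\R$ to all of $W_v\cap W_{v'}$ by the identity theorem. A remark I would use throughout is geometric: $W_v$ and $W_{v'}$ are open discs whose centres $-iv$ and $-iv'$ lie on $i\R$, so $W_v\cap W_{v'}$ is convex---hence connected---and invariant under the reflection $z\mapsto -\bar z$ (which fixes $i\R$ and preserves each $W_v$); consequently, if $W_v\cap W_{v'}\neq\emptyset$, then it must meet $i\R$, and it does so exactly in the nonempty segment $(W_v\cap i\R)\cap(W_{v'}\cap i\R)$. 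Thus $W_v\cap W_{v'}\neq\emptyset$ is equivalent to the overlap of the two axis-segments $W_v\cap i\R$ and $W_{v'}\cap i\R$.

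First I would determine the admissible $v'$. Write $m'$ for the integer attached to $v'$ by Corollary~\ref{cor:FextendedWv}, so that $v'\in[m'+\tfrac12,m'+\tfrac32)$ and (\ref{eq:Fresidues22}) holds near $-iv'$ with upper summation index $m'$; by Lemma~\ref{lemma:upper-limit-sum}, $m'=\floor{v'}$ if $v'\ge\floor{v'}+\tfrac12$ and $m'=\floor{v'}-1$ if $v'<\floor{v'}+\tfrac12$. By (\ref{eq:assWvint}) and (\ref{eq:assWvbd}), $W_v\cap i\R\subseteq -i(m+\tfrac12,m+\tfrac32)$ when $v\in(m+\tfrac12,m+\tfrac32)$ and $W_v\cap i\R\subseteq -i(m,m+1)$ when $v=m+\tfrac12$, with the analogous statement for $(m',v')$. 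Requiring the corresponding axis-segments to overlap and running through the admissible $m'$ then singles out precisely the ranges of $v'$ asserted in the corollary, and shows that $m'=m$ in the case $v\in(m+\tfrac12,m+\tfrac32)$, whereas $m'\in\{m-1,m\}$ in the case $v=m+\tfrac12$, with $m'=m-1$ occurring exactly when $v'\in(m-\tfrac12,m+\tfrac12)$.

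Next I would equate the two instances of (\ref{eq:Fresidues22}) on $(W_v\cap W_{v'})\setminus i\R$, where each of them represents the holomorphic function $F$:
\[
F_{r_v}(z)-F_{r_{v'}}(z)=4\pi i\Bigl(\sum_{n=0}^{m'}G_{(n)}(z)-\sum_{n=0}^{m}G_{(n)}(z)\Bigr)\qquad\bigl(z\in(W_v\cap W_{v'})\setminus i\R\bigr),
\]
the right-hand side being $0$ when $m'=m$ and $-4\pi i\,G_{(m)}(z)$ when $m'=m-1$. If $m'=m$, then $F_{r_v}-F_{r_{v'}}$, holomorphic on the connected open set $W_v\cap W_{v'}$ (each term being holomorphic on $W_v$, resp.\ $W_{v'}$, by Corollary~\ref{cor:FextendedWv}), vanishes on the nonempty open subset $(W_v\cap W_{v'})\setminus i\R$ and hence vanishes identically; this gives $F_{r_{v'}}=F_{r_v}$ in the case $v\in(m+\tfrac12,m+\tfrac32)$ and in the sub-case $v=m+\tfrac12$, $v'\in[m+\tfrac12,m+\tfrac32)$. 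If $m'=m-1$ (so $v=m+\tfrac12$ and $v'\in(m-\tfrac12,m+\tfrac12)$), I would first note, using (\ref{eq:assWvint}) and (\ref{eq:assWvbd}), that $W_v\cap W_{v'}\cap i\R\subseteq -i\bigl((m,m+1)\cap(m-\tfrac12,m+\tfrac12)\bigr)=-i(m,m+\tfrac12)$, which avoids the branch cut $i\bigl((-\infty,-(m+\tfrac12)]\cup[m+\tfrac12,+\infty)\bigr)$ of the physical branch of $G_{(m)}$; hence, by Remark~\ref{rem:principal branch wtG}, $G_{(m)}$ is holomorphic on all of $W_v\cap W_{v'}$, so that $F_{r_v}-F_{r_{v'}}+4\pi i\,G_{(m)}$ is holomorphic there, vanishes off $i\R$, and therefore vanishes identically, yielding $F_{r_{v'}}=F_{r_v}+4\pi i\,G_{(m)}$ on $W_v\cap W_{v'}$.

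I expect the first step---matching the geometric overlap condition with the integer $m'$ produced by Lemma~\ref{lemma:upper-limit-sum}---to be the main obstacle, owing to the bookkeeping required around the half-integer values of $v$ and $v'$ and the open-versus-closed nature of the intervals in (\ref{eq:assWvint}) and (\ref{eq:assWvbd}); in particular one must handle with care the borderline configuration in which $v$ lies near an endpoint of $(m+\tfrac12,m+\tfrac32)$ and $W_{v'}$ is the disc centred at that endpoint, which, after exchanging the roles of $v$ and $v'$, reduces to an instance of the corollary already under discussion (the half-integer case). Once $m'$ has been identified, the remainder is the routine identity-theorem argument above, the only delicate point being the verification---carried out in the preceding paragraph---that the surviving correction term $G_{(m)}$ remains on its holomorphic physical branch over the entire overlap.
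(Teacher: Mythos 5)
Your proof is correct and follows essentially the same route as the paper's: compare the two local presentations (\ref{eq:Fresidues22}) on $(W_v\cap W_{v'})\setminus i\R$, verify that $G_{(m)}$ is on its physical branch over the entire overlap in the $m'=m-1$ case, and extend by the identity theorem on the connected set $W_v\cap W_{v'}$. Your additional geometric observation (that the overlap is convex and reflection-symmetric across $i\R$, hence meets $i\R$) is a clean way to pin down the admissible $v'$; the paper leaves this implicit, citing (\ref{eq:assWvint}) and (\ref{eq:assWvbd}) without elaboration.
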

\begin{prf}
Observe first that $W_v\cap W_{v'}$ is connected. If $v \in \big(m+\frac{1}{2}, m+\frac{3}{2}\big)$ and $W_v\cap W_{v'}\neq \emptyset$, then $v'\in \big[m+\frac{1}{2}, m+\frac{3}{2}\big)$
by (\ref{eq:assWvint}). Moreover,  (\ref{eq:Fresidues22}) gives $F_{r_{v'}}(z)=F_{r_v}(z)$ for
$z \in (W_v\cap W_{v'}) \setminus i\R$. This equality extends to $W_v\cap W_{v'}$ by analyticity.

Suppose now $v=m+\frac{1}{2}$. The assumption (\ref{eq:assWvbd}) ensures that if
$W_v\cap W_{v'}\neq \emptyset$ then $v' \in \big(m-\frac{1}{2}, m+\frac{3}{2}\big)$. When
$v' \in \big[m+\frac{1}{2}, m+\frac{3}{2}\big)$, the equality of the functions $F_r$ follows as
above. If $v' \in \big[m-\frac{1}{2}, m+\frac{1}{2}\big)$, then
$$F(z)=F_{r_{v'}}(z)+4\pi i\sum_{n=0}^{m-1} G_{(n)}(z)
\qquad (z\in W_{v'}\setminus i\R)\,.$$
Therefore $F_{r_{v'}}(z)=F_{r_v}(z)+4\pi i \; G_{(m)}(z)$ for $z \in (W_v\cap W_{v'})\setminus i\R$.
By (\ref{eq:assWvint}), we have $W_{v'} \cap i\R \subset -i \big(m-\frac{1}{2}, m+\frac{1}{2}\big)$.
So $G_{(m)}$ is holomorphic on $W_v\cap W_{v'}$ (see Remark \ref{rem:principal branch wtG}).
Since $W_v\cap W_{v'}$ is connected, the previous equality holds for all $z \in
W_v\cap W_{v'}$.
\end{prf}

For every integer $0\leq m \leq N$ we define
\begin{eqnarray}
\label{eq:Wm}
W_{(m)}&=&\bigcup_{v\in [m+\frac{1}{2}, m+\frac{3}{2})} W_v\\
\label{eq:Fm}
 F_{(m)}(z)&=&F_{r_v}(z)  \qquad (v\in \big[m+\tfrac{1}{2}, m+\tfrac{3}{2}\big), \,z\in W_v)\,.
\end{eqnarray}
Then $W_{(m)}$ is an open neighborhood of $-i\big[m+\frac{1}{2}, m+\frac{3}{2}\big)$
so that $W_{(m)}\cap i\R\subseteq -i\big(m, m+\frac{3}{2}\big)$.
Moreover, setting
\begin{eqnarray}
\label{eq:W'm}
W'_{(m)}&=&\bigcup_{v\in (m+\frac{1}{2}, m+\frac{3}{2})} W_v\,,
\end{eqnarray}
we have $W'_{(m)}\cap i\R =
-i\big(m+\frac{1}{2}, m+\frac{3}{2}\big)$.
By Corollary \ref{cor:forFm},
$F_{(m)}$ is a holomorphic extension of $F_{r_v}$ to $W_{(m)}$ for every $v \in \big[m+\frac{1}{2}, m+\frac{3}{2}\big)$.

It is convenient to introduce similar notation for a neighborhood of $(0,\frac{1}{2})$. We therefore choose for every $v\in (0,\frac{1}{2})$ and open ball $W_v$ centered at $-iv$ so that $W_v\cap i\R\subseteq (0,\frac{1}{2})$ and define:
\begin{eqnarray}
\label{eq:Wminusone}
W_{(-1)}&=&W'_{(-1)}=\bigcup_{v\in (0,\frac{1}{2})} W_v\\
\label{eq:Fminusone}
 F_{(-1)}(z)&=&F(z)  \qquad (z\in W_{(-1)})\,.
\end{eqnarray}
Observe that $W_{(-1)} \cap i\R=-i(0,\frac{1}{2})$. Moreover, for every $m\in \{-1,0,\dots,N-1\}$ the intersection $W_{(m)}\cap W_{(m+1)}=W_{(m)}\cap W_{m+\frac{3}{2}}$ is a nonempty open connected set.
\begin{cor}
\label{cor:FextendedMainSection}
For every integer $m\geq -1$ we have
 \begin{equation}
\label{eq:Fresidues2}
F(z)=F_{(m)}(z)+4\pi i\sum_{n=0}^{m} G_{(n)}(z)
\qquad (z\in W_{(m)}\setminus i\R)\,,
\end{equation}
where $F_{(m)}$ is holomorphic in $W_{(m)}$, the $G_{(n)}$ are as in (\ref{boundedness3}), and empty sums are defined to be equal to $0$.
Consequently, for every integer $m\geq -1$,
\begin{equation}
\label{eq:FmFm-1}
F_{(m)}(z)=F_{(m+1)}(z)+4\pi i \;G_{(m+1)}(z) \qquad (z \in W_{(m)}\cap W_{(m+1)}=W_{(m)}\cap W_{m+\frac{3}{2}})\,.
\end{equation}
\end{cor}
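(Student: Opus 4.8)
\emph{Proof proposal.} The plan is to assemble (\ref{eq:Fresidues2}) out of the local expansions already established and then to deduce (\ref{eq:FmFm-1}) by comparing it at two consecutive values of $m$. Fix $N\in\mathbb N$ (as large as one wishes) and argue for $-1\le m\le N$, the range for which the sets $W_{(m)}$ and the functions $F_{(m)}$ have been constructed. The base case $m=-1$ is immediate: the sum in (\ref{eq:Fresidues2}) is then empty, so the asserted formula is literally the definition $F_{(-1)}=F$ from (\ref{eq:Fminusone}); and since $W_{(-1)}\cap i\R=-i(0,\tfrac12)$ stays away from the cuts $\pm i[\tfrac12,\infty)$, Proposition~\ref{pro:holoextRF}(a) shows that $F_{(-1)}=F$ is holomorphic on $W_{(-1)}$.

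For $0\le m\le N$ I would appeal to Corollary~\ref{cor:FextendedWv}: for each $v\in[m+\tfrac12,m+\tfrac32)$ it provides a ball $W_v$ centered at $-iv$ and a radius $0<r_v<1$ with $F(z)=F_{r_v}(z)+4\pi i\sum_{n=0}^{m}G_{(n)}(z)$ for $z\in W_v\setminus i\R$, where $F_{r_v}$ is holomorphic on $W_v$. By (\ref{eq:Fm}) the function $F_{(m)}$ restricts to $F_{r_v}$ on each such $W_v$, and Corollary~\ref{cor:forFm}, applied to pairs $v,v'\in[m+\tfrac12,m+\tfrac32)$, guarantees that these restrictions agree on every overlap $W_v\cap W_{v'}$; hence $F_{(m)}$ is a single-valued holomorphic function on $W_{(m)}=\bigcup_v W_v$. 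Since every $z\in W_{(m)}\setminus i\R$ belongs to some $W_v$, the local formula becomes (\ref{eq:Fresidues2}).

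To deduce (\ref{eq:FmFm-1}) for $-1\le m\le N-1$, I would work on the nonempty open connected set $W_{(m)}\cap W_{(m+1)}=W_{(m)}\cap W_{m+3/2}$ and subtract (\ref{eq:Fresidues2}) at level $m+1$ from (\ref{eq:Fresidues2}) at level $m$; cancelling the common terms leaves $F_{(m)}(z)-F_{(m+1)}(z)-4\pi i\,G_{(m+1)}(z)=0$ for $z$ in that set off $i\R$. To promote this to an identity on the whole connected set I would check that the left-hand side is holomorphic there: $F_{(m)}$ and $F_{(m+1)}$ are holomorphic by the previous step, and $G_{(m+1)}$ is holomorphic because $(W_{(m)}\cap W_{(m+1)})\cap i\R\subseteq -i(m+1,m+\tfrac32)$, which by Remark~\ref{rem:principal branch wtG} lies in the domain $\C\setminus i\big((-\infty,-(m+\tfrac32)]\cup[m+\tfrac32,\infty)\big)$ on which $G_{(m+1)}$ is regular. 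Being holomorphic on $W_{(m)}\cap W_{(m+1)}$ and zero on the dense open subset off $i\R$, the difference then vanishes identically by the identity theorem.

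The only step I expect to be genuinely delicate is this last holomorphy check, namely that the \emph{newly added} summand $G_{(m+1)}$ is regular across the arc of the imaginary axis contained in $W_{(m)}\cap W_{(m+1)}$. This is precisely where the branch point $-i(m+\tfrac32)$ of the Riemann surface $\M_{m+1}$ must be shown to fall outside the overlap region, which is ensured by the calibrated choices of the neighborhoods $W_v$ in (\ref{eq:assWvint})--(\ref{eq:assWvbd}) (forcing $W_{(m)}\cap i\R\subseteq -i(m,m+\tfrac32)$) together with the physical-sheet description of $G_{(m+1)}$ in Remark~\ref{rem:principal branch wtG}. Everything else is bookkeeping with the constructions of Corollaries~\ref{cor:FextendedWv} and~\ref{cor:forFm}.
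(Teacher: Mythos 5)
Your proposal is correct and is essentially the same argument the paper compresses into a one-line reference to Corollary~\ref{cor:FextendedWv} and the definitions (\ref{eq:Wm})--(\ref{eq:Fminusone}): you are simply spelling out the well-definedness of $F_{(m)}$ via Corollary~\ref{cor:forFm}, the globalization of the local formula, and (for (\ref{eq:FmFm-1})) the holomorphy of $G_{(m+1)}$ across the overlap, all of which the paper has already set up in the paragraphs preceding the corollary.
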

\begin{prf}
This is an immediate consequence of Corollary \ref{cor:FextendedWv} and (\ref{eq:Wm})--(\ref{eq:Fminusone}).
\end{prf}

The meromorphic continuation of $F$ across $-i[\frac{1}{2},+\infty)$ will be done per stages, on Riemann surfaces constructed from the
$\M_n$'s. Recall that $\M_n$ denotes the Riemann surface to which $G_{(n)}$ lifts as a meromorphic function.
For an integer $N\geq 0$, let
\begin{equation}
\label{eq:MN}
\M_{(N)}
=\left\{(z,\zeta)\in \C^\times \times \C^{N+1};\ \zeta=(\zeta_0, \zeta_1,\dots, \zeta_N);\ (z,\zeta_n)\in \M_n,\ 0\leq n\leq N\right\}.
\end{equation}
Then $\M_{(N)}$ is a Riemann surface, and the map
\begin{eqnarray}\label{finite cover 1}
&&\pi_{(N)}: \M_{(N)} \ni(z,\zeta)\mapsto z\in \C^\times
\end{eqnarray}
is a holomorphic $2^{N+1}$-to-$1$ cover, except when $z\in\{\pm i(n+\frac{1}{2});\ n\in \mathbb N\,, 0\leq n\leq N\}$. (This may be seen by checking, as in (\ref{preimageofzero}), that if $a_1, a_2, \dots, a_k$ are non-zero complex numbers whose squares are mutually distinct, then
\begin{eqnarray}\label{the preimage of zero again}
\{(w,\zeta_1, \zeta_2, \dots, \zeta_k);\ \zeta_j^2=a_j^2 w^2-1,\ 1\leq j\leq k\}\subseteq \C^{k+1}
\end{eqnarray}
is a one-dimensional complex submanifold with all the required properties.).

The $\zeta$-coordinates $\zeta_n$ ($0\leq n \leq N$) of the points of the fiber of $-iv \in -i\R^+$ in $\M_{(N)}$ are uniquely determined
by the condition that $(-iv,\zeta_n) \in \M_n$.
This means that
\begin{eqnarray} \label{eq:zetam}
\zeta^2_n=\left(\frac{n+\frac{1}{2}}{v}\right)^2-1 \qquad (0\leq n\leq N)\,.
\end{eqnarray}
If $v \notin \{m+\frac{1}{2}; m\in \mathbb N\,, 0\leq m\leq N\}$, we get exactly $2^{N+1}$ points, corresponding to the two sign choices for each $\zeta_n$.
Hence the fiber of $-iv \in -i\R^+ \setminus  \{-i\big(m+\frac{1}{2}\big); m\in \mathbb N\,, 0\leq m\leq N\}$ consists of the points
\begin{equation}
\label{eq:fiber -iv generic}
\big( -iv, \zeta^\pm_0(-iv), \dots, \zeta^\pm_N(-iv)\big)
\end{equation}
where $\zeta^\pm_n(-iv)$ is as in (\ref{eq:zetan+-iv}).
If $v=m+\frac{1}{2}$ for some $m\in\{0,\dots,N\}$, then the $\zeta_m$-coordinate of the points of
the fiber of $-i(m+\frac{1}{2})$ is zero, whereas (\ref{eq:zetam}) has precisely two solutions, equal to
$\pm\zeta_n^+\big(-i(m+\frac{1}{2})\big)$, for each
$n\in\{0,\dots,N\}$ which is different from $m$. Hence the fiber of $-i(m+\frac{1}{2})$ contains exactly $2^N$ points. These points have a real coordinate
$\zeta_n$ when  $N\geq n> m$, a purely imaginary coordinate $\zeta_n$ when $0\leq n <m$, and $\zeta_m=0$. They are the branching points of $\M_{(N)}$.
A schematic picture of the coordinates of the points of the fiber above $-i\R^+$ is drawn in Figure 2.

\begin{center}
\begin{figure}
\vskip -3.5truecm

\null\qquad\includegraphics[width=16cm]{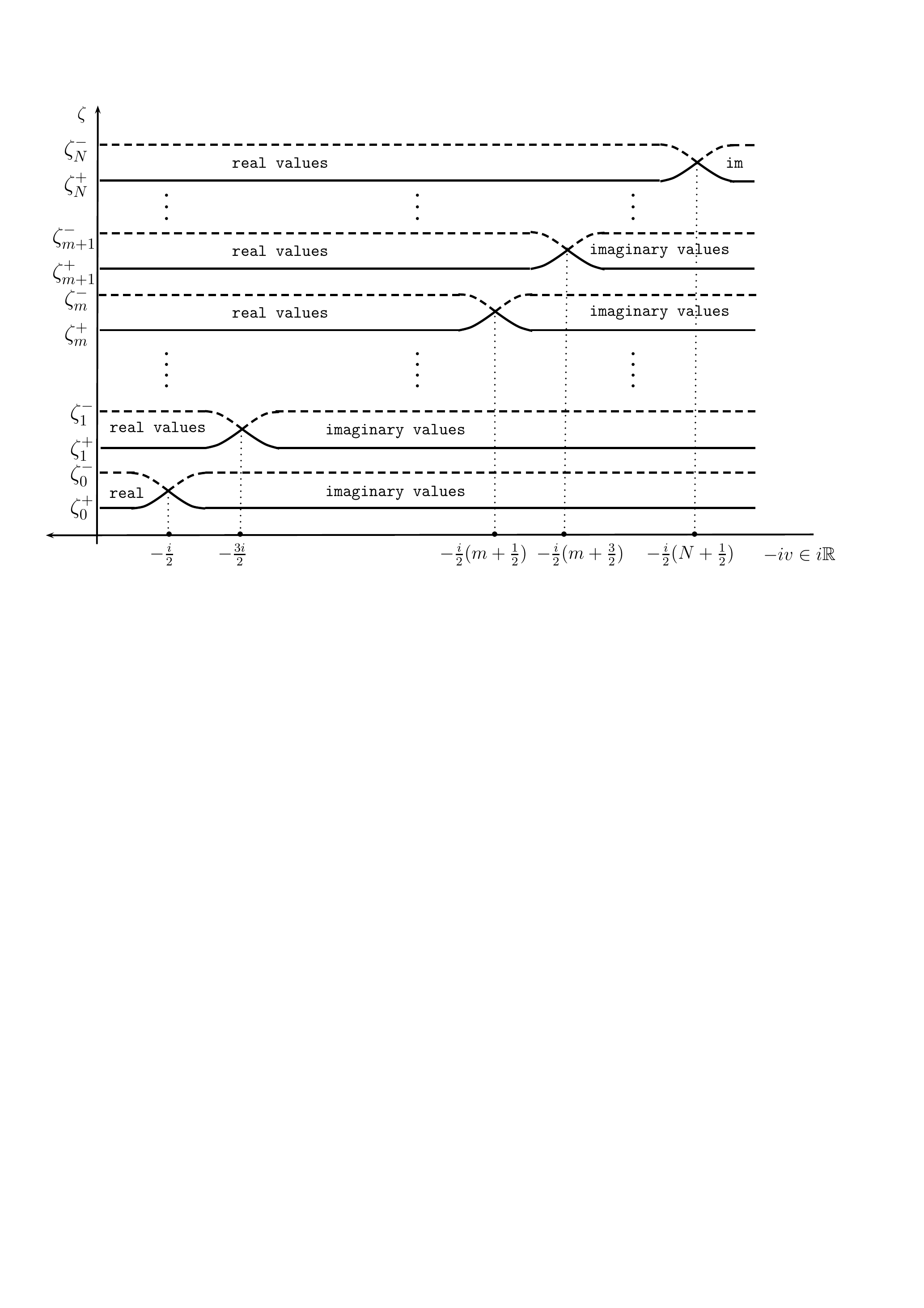}

\vskip -13truecm

\caption{Schema of the fibre above  $-i\R^+$ in $\M_{(N)}$}
\end{figure}
\end{center}

We want to construct a meromorphic lift of $F$ along the branched curve
\begin{equation}
\label{eq:gamma}
\gamma_{N}: v \in \big(0,N+\tfrac{3}{2}\big) \mapsto (-iv,\pm\zeta_0^+(-iv), \dots, \pm\zeta_N^+(-iv)) \in \M_{(N)}\,,
\end{equation}
which is the lift of $-i\big(0,N+\tfrac{3}{2}\big)$ in $\M_{(N)}$.
More precisely, let $W_{(m)}$ be the open set in $\C$ defined in (\ref{eq:Wm}) and (\ref{eq:Wminusone}).
Then we meromorphically lift $F$ to the open neighborhood
\begin{equation}
\label{eq:MNgamma}
\M_{(\gamma_N)}=\pi_{(N)}^{-1}\Big(\bigcup_{m=-1}^N W_{(m)}\Big)
\end{equation}
of $\gamma_N$ in $\M_{(N)}$.

\smallskip

Notice that by (\ref{eq:assWvbd}) the radius $R_m$ of the open disk $W_{m+\frac{1}{2}}$ satisfies $R_m<1/2$. Moreover, using also (\ref{eq:assWvint}), we have for $m\in \{0,1,\dots,N-1\}$
\begin{eqnarray*}
&&W_{(m)}\cap W_{(m+1)}=W_{(m)}\cap W_{m+\tfrac{3}{2}}=(W_{(m)}\setminus W_{m+\tfrac{1}{2}})\cap W_{m+\tfrac{3}{2}}\,,\\
&&W_{(m)}\cap W_{(m+1)} \cap i\R=-i\big]m+\tfrac{3}{2}-R_{m+1},m+\tfrac{3}{2}\big[\,.
\end{eqnarray*}
Hence, for $m\in \{-1,0,\dots,N\}$,
\begin{equation}
\label{eq:propWm1}
W_{(m)}\cap W_{(m+1)} \cap i\big(\mathbb Z+\tfrac{1}{2}\big) =\emptyset
\end{equation}
and the branching point $-i\big(m+\frac{3}{2}\big)$ is a boundary point of $W_{(m)}\cap W_{(m+1)}$.

For every $\eps=(\eps_0,\dots,\eps_N)\in \{\pm 1\}^{N+1}$ we define a section
$$\sigma_\eps: \C\setminus
i\big(\big(-\infty,- \tfrac{1}{2}\big] \cup \big[\tfrac{1}{2},+\infty\big)\big)\to \M_{(N)}$$
of $\pi_{(N)}$ by setting
\begin{equation}
\label{eq:sigmaeps}
\sigma_\eps(z)=(z,\eps_0\zeta_0^+(z),\dots,\eps_N\zeta_N^+(z))\,.
\end{equation}
This is well defined as $\zeta_n^+(z)$ is well defined at $z\in \C\setminus
i\big(\big(-\infty,-(n+\frac{1}{2})\big)\big] \cup \big[n+\frac{1}{2},+\infty\big)\big)$ for all $n\in \{0,1,\dots,N\}$.

Because of (\ref{eq:propWm1}), by possibly shrinking the open disks $W_v$, we can also assume
that $\pi_{(N)}^{-1}(W'_{(m)})$ is the disjoint union of $2^{N+1}$ homeomorphic copies of $W'_{(m)}$.
In particular, each of these copies is a connected set. We denote by $U_{m,\eps}$ the copy containing
$\sigma_\eps(W'_{(m)}\setminus i\R)$.

Similarly, by possibly shrinking the open disks $W_{m+\tfrac{1}{2}}$, we can furthermore assume
that their preimage $\pi_{(N)}^{-1}\big(W_{m+\tfrac{1}{2}}\big)$ in $\M_{(N)}$ is the disjoint union of $2^{N}$ homeomorphic copies
of $\pi_m^{-1}(W_{m+\tfrac{1}{2}})$. They can be parameterized by the elements
\begin{equation}
\label{eq:epsmcheck}
(\eps_0,\dots, \eps_{m-1},\eps_{m+1},\dots,\eps_N) \qquad (\eps_n\in \{\pm 1\}, 0\leq n\leq N, n\neq m)\,.
\end{equation}
We denote by $\eps({m}^\vee)$ the element $(\ref{eq:epsmcheck})$ obtained from $\eps \in \{\pm 1\}^{N+1}$ by removing its $m$-th component. This means that $\eps({m}^\vee)=\eps'({m}^\vee)$ if and only if $\eps$ and $\eps'$ are equal but for their $m$-th component, which can be $\pm 1$. Modulo this identification, we can indicate the connected components of $\pi_{(N)}^{-1}\big(W_{m+\tfrac{1}{2}}\big)$ as $U_{\eps({m}^\vee)}$ with $\eps \in  \{\pm 1\}^{N+1}$.
To unify notation, we define $U_{\eps({m}^\vee)}=\emptyset$ for $m=-1$.

Observe that $\big\{U_{\eps({m}^\vee)}\cup U_{m,\eps}; \eps\in \{\pm 1\}^{N+1}, m\in \mathbb Z\,, -1\leq m\leq N\big\}$
is a covering of $\M_{(\gamma_N)}$ consisting of open connected sets.

\begin{thm}
\label{thm:meroliftF}
For $m\in \{-1,0,\dots,N\}$, $\eps \in \{\pm 1\}^{N+1}$ and $(z,\zeta) \in U_{\eps({m}^\vee)}\cup U_{m,\eps}$ define
\begin{equation}
\label{eq:widetildeF}
\wt F(z,\zeta)=
\displaystyle{F_{(m)}(z)+4\pi i \sum_{n=0}^m\wt G_{(n)}(z,\zeta_n)+4\pi i
\sum_{\stackrel{m<n\leq N}{\text{with $\eps_n=-1$}}} \big[\wt G_{(n)}(z,\zeta_{n})-\wt G_{(n)}(z,-\zeta_{n}) \big]}\,,
 \end{equation}
where the first sum is equal to $0$ if $m=-1$ and the second sum is $0$ if $\eps_n=1$ for all $n>m$.

Then $\wt F$ is a meromorphic lift of $F$ to the open neighborhood
$\M_{(\gamma_N)}$ of the branched curve $\gamma_{N}$
lifting $-i\big(0,N+\tfrac{3}{2}\big)$ in $\M_{(N)}$.
The singularities of $\wt F$ on $\M_{(\gamma_N)}$ are simple poles at the points
$\big(-i\big(m+\frac{1}{2}\big),\zeta\big)\in \M_{(N)}$ with $m\in \{0,1,\dots,N\}$.
\end{thm}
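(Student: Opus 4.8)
The plan is to verify that the locally-defined expression \eqref{eq:widetildeF} for $\wt F$ is (i) well-defined on each open set $U_{\eps(m^\vee)}\cup U_{m,\eps}$, (ii) meromorphic there with the stated poles, (iii) consistent on overlaps, so that the pieces glue to a global meromorphic function on $\M_{(\gamma_N)}$, and (iv) actually a lift of $F$. The underlying bookkeeping is entirely provided by Corollary~\ref{cor:FextendedMainSection}, Lemma~\ref{lift of grn+}, and Remark~\ref{rem:principal branch wtG}; the point is to assemble these facts carefully.

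First I would check that each summand in \eqref{eq:widetildeF} makes sense and is meromorphic on $U_{\eps(m^\vee)}\cup U_{m,\eps}$. The term $F_{(m)}(z)$ is holomorphic on $W_{(m)}\supseteq \pi_{(N)}(U_{\eps(m^\vee)}\cup U_{m,\eps})$ by Corollary~\ref{cor:FextendedMainSection}. For $0\le n\le m$, the function $\wt G_{(n)}(z,\zeta_n)$ is meromorphic on $\M_n$ with a simple pole only above $\pm i(n+\tfrac12)$ by Lemma~\ref{lift of grn+}; since $W_{(m)}\cap i\R\subseteq -i(m,m+\tfrac32)$, the only such branching point met is $-i(m+\tfrac12)$ (when $n=m$), giving a simple pole of $\wt F$ exactly there. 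For $n>m$ with $\eps_n=-1$, the combination $\wt G_{(n)}(z,\zeta_n)-\wt G_{(n)}(z,-\zeta_n)$ is a difference of two holomorphic branches over $W_{(m)}$ (here $n+\tfrac12>m+\tfrac32>$ everything in $W_{(m)}\cap i\R$, so $-i(n+\tfrac12)\notin W_{(m)}$ and by Remark~\ref{rem:principal branch wtG} both $\zeta_n^\pm$ are holomorphic there), hence holomorphic. Thus on each chart $\wt F$ is meromorphic with the asserted pole structure. That the $\zeta$-coordinates $\zeta_n$ appearing in $\wt G_{(n)}(z,\zeta_n)$ are the correct ones on $U_{\eps(m^\vee)}\cup U_{m,\eps}$ — namely $\eps_n\zeta_n^+(z)$ on the $W'_{(m)}$-part and $\zeta_n^+$ (up to sign, with $\zeta_m=0$) on the $W_{m+\frac12}$-part — follows from the definition \eqref{eq:sigmaeps} of $\sigma_\eps$ and the way $U_{m,\eps}$, $U_{\eps(m^\vee)}$ were chosen.

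Next I would check that $\wt F$ is a lift of $F$, i.e.\ $\wt F\circ\sigma = F$ for a suitable section. Restrict to $z\in W'_{(m)}\setminus i\R$ and the section $\sigma_\eps$ with $\eps=(1,\dots,1)$: then $\zeta_n=\zeta_n^+(z)$, the second sum in \eqref{eq:widetildeF} is empty, and \eqref{eq:wtG zetan+} gives $\wt G_{(n)}(z,\zeta_n^+(z))=G_{(n)}(z)$, so \eqref{eq:widetildeF} reduces to $F_{(m)}(z)+4\pi i\sum_{n=0}^m G_{(n)}(z)=F(z)$ by \eqref{eq:Fresidues2}. This identifies the physical-sheet lift with $F$; the other sheets are then the analytic continuations prescribed by the branched covering, which is exactly what "lift" means.

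The main obstacle — and the step requiring real care — is the consistency of \eqref{eq:widetildeF} on overlaps of the covering sets, of which there are two types. (a) \emph{Crossing a branch point $-i(m+\tfrac32)$}, i.e.\ comparing the formulas for index $m$ and index $m+1$ on $U_{m,\eps}\cap U_{m+1,\eps'}$ (nonempty only when $\eps,\eps'$ agree in the components that are "resolved", and $\eps'_{m+1}$ records which side one came out on). Here one uses $F_{(m)}=F_{(m+1)}+4\pi i\,G_{(m+1)}$ from \eqref{eq:FmFm-1} together with the fact that, as $z$ passes from $W_{(m)}$ into $W_{(m+1)}$, $\zeta_{m+1}$ switches from the branch where $\wt G_{(m+1)}(z,\zeta_{m+1})$ equals $G_{(m+1)}(z)$ to a branch where it does not; the difference term in the second sum absorbs precisely the discrepancy $4\pi i[\wt G_{(m+1)}(z,\zeta_{m+1})-\wt G_{(m+1)}(z,-\zeta_{m+1})]$ or $4\pi i\,\wt G_{(m+1)}(z,\zeta_{m+1})$ depending on $\eps'_{m+1}$, matching \eqref{eq:FmFm-1}. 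One must treat both choices of $\eps'_{m+1}$. (b) \emph{Changing sign of some $\zeta_n$ with $n>m$ without crossing a branch point}, i.e.\ passing between $U_{m,\eps}$ and $U_{m,\eps'}$ where $\eps,\eps'$ differ only in one component $\eps_n$, $n>m$: here the index $m$, hence $F_{(m)}$ and the first sum, are unchanged, and the second sum changes by exactly $\pm 4\pi i[\wt G_{(n)}(z,\zeta_n)-\wt G_{(n)}(z,-\zeta_n)]$, which is consistent because swapping $\zeta_n\mapsto-\zeta_n$ in the bracket negates it. Over the sets $U_{\eps(m^\vee)}$ (neighborhoods of branch points themselves) one argues by continuity/analytic continuation from the punctured neighborhood, using connectedness established before Theorem~\ref{thm:meroliftF}. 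Verifying that these local comparisons cover all nonempty overlaps — using \eqref{eq:assWvint}, \eqref{eq:assWvbd}, \eqref{eq:propWm1} to rule out any other coincidences of sheets — and that the gluing data satisfies the cocycle condition on triple overlaps (automatic here since everything is pinned down by analytic continuation along $-i\R^+$) completes the proof.
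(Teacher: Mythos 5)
Your proof follows essentially the same plan as the paper's: verify meromorphicity of each local expression on $U_{\eps(m^\vee)}\cup U_{m,\eps}$, check consistency on overlaps using the transition relation $F_{(m)}=F_{(m+1)}+4\pi i\,G_{(m+1)}$ from~\eqref{eq:FmFm-1} together with $\wt G_{(n)}\circ\sigma_n^+=G_{(n)}$, and recover the lift property by evaluating on the physical sheet $\eps=(1,\dots,1)$. The mechanism whereby the difference terms $\wt G_{(n)}(z,\zeta_n)-\wt G_{(n)}(z,-\zeta_n)$ absorb the discrepancy between adjacent charts is exactly the paper's device.

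One piece of the bookkeeping is off, though the conclusion is not affected. The sets $U_{m,\eps}$ for different $(m,\eps)$ are \emph{pairwise disjoint} by construction (each is a distinct connected component of $\pi_{(N)}^{-1}(W'_{(m)})$), so your case (a) should compare $\wt F_{(m,\eps)}$ and $\wt F_{(m+1,\eps')}$ on $U_{m,\eps}\cap U_{\eps'((m+1)^\vee)}$ (or $U_{\eps(m^\vee)}\cap U_{m+1,\eps'}$), not on $U_{m,\eps}\cap U_{m+1,\eps'}$, which is empty. Likewise your case (b) -- changing a single sign $\eps_n$ with $n>m$ without passing a branch point -- describes a pair of sheets $U_{m,\eps}$ and $U_{m,\eps'}$ that simply never meet, so there is nothing to check there; the ``consistency because the bracket negates'' computation is an argument for a vacuous overlap. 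The paper organizes the overlap analysis via the decomposition $(U_{\eps(m^\vee)}\cup U_{m,\eps})\cap(U_{\eps'(m'^\vee)}\cup U_{m',\eps'}) = (U_{\eps(m^\vee)}\cap U_{\eps'(m'^\vee)})\cup(U_{\eps(m^\vee)}\cap U_{m',\eps'})\cup(U_{\eps'(m'^\vee)}\cap U_{m,\eps})$, immediately forcing either $m=m'$ with $\eps(m^\vee)=\eps'(m^\vee)$ (where the formula visibly does not depend on $\eps_m$) or $m'=m\pm1$, and then splits on the value of $\eps_{m+1}$ (resp.\ $\eps_m$) as you do. Aside from this mislabeling of the overlap sets, your argument is the same as the paper's.
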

\begin{prf}

To simplify the notation, we write the coordinates of the points of $\M_{(N)}$ above some
fixed $z\in \C$ as $\zeta_n^\pm$ instead of $\zeta_n^\pm(z)$.

Let $\wt F_{(m,\eps)}(z,\zeta)$ denote the right-hand side of (\ref{eq:widetildeF}).
Since for every $m\in \{-1,0,\dots,N-1\}$ the function $\wt F_{(m,\eps)}(z,\zeta)$ is meromorphic on
$U_{\eps({m}^\vee)}\cup U_{m,\eps}$, then $\wt F(z,\zeta)$ will be meromorphic on $\M_{(\gamma_N)}$ provided
 $\wt F_{(m,\eps)}(z,\zeta)=\wt F_{(m',\eps')}(z,\zeta)$ on all nonempty intersections $(U_{\eps({m}^\vee)}\cup U_{m,\eps})\cap (U_{\eps'((m')^\vee)}\cup U_{m',\eps'})$ with $m,m'\in\{-1,0,\dots,N\}$, $\eps,\eps'\in \{\pm 1\}^{N+1}$ and $(m,\eps)\neq (m',\eps')$.

If $(m,\eps)\neq (m',\eps')$, then $U_{m,\eps}\cap U_{m',\eps'}=\emptyset$. Hence
$$
(U_{\eps({m}^\vee)}\cup U_{m,\eps})\cap (U_{\eps'((m')^\vee)}\cup U_{m',\eps'})
=
\big(U_{\eps({m}^\vee)}\cap U_{\eps'((m')^\vee)}\big) \cup \big(U_{\eps({m}^\vee)}\cap U_{m',\eps'}\big)\cup
\big(U_{\eps'((m')^\vee)} \cap U_{m,\eps}\big)\,.$$

Since different $U_{\eps({m}^\vee)}$'s are disjoint, $U_{\eps({m}^\vee)}\cap U_{\eps'((m'^\vee)}\neq \emptyset$ means that $U_{\eps({m}^\vee)}= U_{\eps'((m')^\vee)}$. Since $(m,\eps)\neq (m',\eps')$, this means that $m=m'$,
$\eps_n=\eps'_n$ for all $n\in \{0,1,\dots,N\}$ with $n\neq m$ and $\eps_m=-\eps'_m$. The definition of  $\wt F_{(m,\eps)}(z,\zeta)$ on the right-hand side of (\ref{eq:widetildeF}) depends on $\eps$ only for the $\eps_n$'s with $n>m=m'$. Therefore, in this case we have $\wt F_{(m,\eps)}(z,\zeta)=\wt F_{(m',\eps')}(z,\zeta)$ for $(z,\zeta) \in U_{\eps({m}^\vee)}=U_{\eps'((m')^\vee)}$.

Suppose now that $U_{\eps'((m')^\vee)} \cap U_{m,\eps}\neq \emptyset$. If $m=m'$, this means that $\eps(m^\vee)=
\eps'((m')^\vee)$. We are therefore in the situation just considered, in which $\wt F_{(m,\eps)}(z,\zeta)=\wt F_{(m',\eps')}(z,\zeta)$ for $(z,\zeta) \in U_{\eps(m^\vee)}=U_{\eps'((m')^\vee)}$. If $m\neq m'$, then two cases may occur:
\begin{enumerate}
\item
$m'=m+1$ and $\eps'((m')^\vee)=\eps'((m+1)^\vee)=\eps((m+1)^\vee)$.
\item
$m'=m-1$ and $\eps'((m')^\vee)=\eps'((m-1)^\vee)=\eps((m-1)^\vee)$.
\end{enumerate}
In Case (1) we need to check that $\wt F_{(m,\eps)}(z,\zeta)=\wt F_{(m+1,\eps)}(z,\zeta)$ for $(z,\zeta) \in U_{m,\eps}\cap U_{\eps((m+1)^\vee)}$.
Since $U_{m,\eps}\cap U_{\eps((m+1)^\vee)}$ is connected (as homeomorphic to $W_{(m)}\cap W_{m+\frac{3}{2}}$), it suffices to
check this equality on its subset $\sigma_\eps\big((W_{(m)}\cap W_{m+\frac{3}{2}}) \setminus i\R\big)$. On this subset, we have $\zeta_{m+1}=\eps_{m+1}\zeta_{m+1}^+$. Hence, by (\ref{eq:wtG zetan+}),
\begin{equation*}
\wt G_{(m+1)}(z,\eps_{m+1}\zeta_{m+1})=\wt G_{(m+1)}(z,\zeta_{m+1}^+)=G_{(m+1)}(z)\qquad \big((z,\zeta)\in \sigma_\eps\big((W_{(m)}\cap W_{m+\frac{3}{2}}) \setminus i\R\big)\,.
\end{equation*}
Recall that, by (\ref{eq:FmFm-1}),
$$F_{(m)}(z)=F_{(m+1)}(z)+4\pi i \, G_{(m+1)}(z) \qquad (z \in W_{(m)}\cap W_{m+\frac{3}{2}}).$$
These equalities will be subsequently used in the computations below.

We now need to distinguish two further cases inside Case (1):
\begin{enumerate}
\renewcommand{\theenumi}{\alph{enumi}}
\renewcommand{\labelenumi}{{\rm (1.\theenumi)}}
\item
$\eps_n=1$ for all $n>m$,
\item
there exists $n>m$ such that $\eps_n=-1$.
\end{enumerate}
In Case (1.a), we have $\eps_{m+1}=1$. In Case (1.b), we also have to distinguish whether $\eps_{m+1}=1$ or $\eps_{m+1}=-1$.
In Case (1.a) or in Case (1.b) with $\eps_{m+1}=1$,
for $(z,\zeta) \in \sigma_\eps\big((W_{(m)}\cap W_{m+\frac{3}{2}}) \setminus i\R\big)$
(and with empty sums equal to 0), we have:
\begin{eqnarray*}
\wt F_{(m,\eps)}(z,\zeta)&=&F_{(m)}(z)+4\pi i\sum_{n=0}^{m}\wt G_{(n)}(z,\zeta_n)
+4\pi i\sum_{\stackrel{m+1<n\leq N}{\text{with $\eps_n=-1$}}} \big[\wt G_{(n)}(z,\zeta_{n})-\wt G_{(n)}(z,-\zeta_{n}) \big]\\
&=&F_{(m+1)}(z)+ 4\pi i \, G_{(m+1)}(z)+4\pi i\sum_{n=0}^{m}\wt G_{(n)}(z,\zeta_n)\\
&&\qquad \qquad \qquad
+4\pi i\sum_{\stackrel{m+1<n\leq N}{\text{with $\eps_n=-1$}}} \big[\wt G_{(n)}(z,\zeta_{n})-\wt G_{(n)}(z,-\zeta_{n}) \big]\\
&=&F_{(m+1)}(z)+4\pi i\sum_{n=0}^{m+1}\wt G_{(n)}(z,\zeta_n)
+4\pi i\sum_{\stackrel{m+1<n\leq N}{\text{with $\eps_n=-1$}}} \big[\wt G_{(n)}(z,\zeta_{n})-\wt G_{(n)}(z,-\zeta_{n}) \big]\\
&=&\wt F_{(m+1,\eps)}(z,\zeta)\,.
\end{eqnarray*}
In Case (1.b) with $\eps_{m+1}=-1$, then for $(z,\zeta) \in \sigma_\eps\big((W_{(m)}\cap W_{m+\frac{3}{2}}) \setminus i\R\big)$
(and with empty sums equal to 0), we have:
\begin{eqnarray*}
\wt F_{(m,\eps)}(z,\zeta)&=&F_{(m)}(z)+4\pi i\sum_{n=0}^{m}\wt G_{(n)}(z,\zeta_n)
+4\pi i\;\big[\wt G_{(m+1)}(z,\zeta_{m+1})-\wt G_{(m+1)}(z,-\zeta_{m+1}) \big]\\
&&\qquad \qquad \qquad
+4\pi i\sum_{\stackrel{m+1<n\leq N}{\text{with $\eps_n=-1$}}} \big[\wt G_{(n)}(z,\zeta_{n})-\wt G_{(n)}(z,-\zeta_{n}) \big]\\
&=&F_{(m+1)}(z)+ 4\pi i \, G_{(m+1)}(z)+4\pi i\sum_{n=0}^{m}\wt G_{(n)}(z,\zeta_n)\\
&&\qquad \qquad \qquad
+4\pi i\; \big[\wt G_{(m+1)}(z,\zeta_{m+1})-\wt G_{(m+1)}(z,-\zeta_{m+1}) \big]\\
&&\qquad \qquad \qquad \qquad \qquad \qquad
+4\pi i\sum_{\stackrel{m+1<n\leq N}{\text{with $\eps_n=-1$}}} \big[\wt G_{(n)}(z,\zeta_{n})-\wt G_{(n)}(z,-\zeta_{n}) \big]\\
&=&F_{(m+1)}(z)+4\pi i\sum_{n=0}^{m+1}\wt G_{(n)}(z,\zeta_n)
+4\pi i\sum_{\stackrel{m+1<n\leq N}{\text{with $\eps_n=-1$}}} \big[\wt G_{(n)}(z,\zeta_{n})-\wt G_{(n)}(z,-\zeta_{n}) \big]\\
&=&\wt F_{(m+1,\eps)}(z,\zeta)\,.
\end{eqnarray*}
This concludes Case (1).

For Case (2), we need to check that $\wt F_{(m-1,\eps)}(z,\zeta)=\wt F_{(m,\eps)}(z,\zeta)$ for $(z,\zeta) \in U_{m,\eps}\cap U_{\eps((m-1)^\vee)}$. As in Case (1), since $U_{m,\eps}\cap U_{\eps((m-1)^\vee)}$ is connected (as homeomorphic to $W_{(m)}\cap W_{m+\frac{1}{2}}$), it suffices to
check this equality on $\sigma_\eps\big((W_{(m)}\cap W_{m+\frac{1}{2}}) \setminus i\R\big)$, where $\zeta_{m}=\eps_{m}\zeta_{m}^+$. The proof parallels that of Case (1), by considering if $\eps_n=1$ for all $n>m-1$ (Case (2.a)) or if there exists $n>m-1$ such that $\eps_n=-1$ (Case (2.b)). In Case (2.b), one moreover has to distnguish whether
$\eps_m=1$ or $\eps_m=-1$. The details are omitted.

This concludes the proof that $\wt F(z,\zeta)$ is a meromorphic function on $\M_{(\gamma_N)}$. Its singularities on $W_{(m)}$ are those
of $\wt G_{(m)}(z,\zeta_{m})$, i.e. simple poles at the branching points above $z=-i\big(m+\frac{1}{2})$, with $m\in \{0,1,\dots,N\}$.

The fact that $\wt F(z,\zeta)$ is a lift of $F$ is an immediate consequence of (\ref{eq:wtG zetan+}), Corollary \ref{cor:FextendedMainSection} and the definition of $\wt F_{(m,\eps)}(z,\zeta)$ for $\eps=(1,\dots,1)$.
\end{prf}

\subsection{Meromorphic extension of the resolvent}

\label{subsection:mero-ext-resolvent}

Let $\C^-=\{z\in \C;\Im z<0\}$ be the lower half plane. In this subsection we meromorphically extend the resolvent
$z \mapsto [R(z)f](y)$ (where $f\in C_c^\infty(\X)$ and $y\in \X$ are arbitrarily fixed) from $\C^- \setminus i(-\infty,-\frac{1}{2}\rhoX]$ across the half-line  $ i(-\infty,-\frac{1}{2}\rhoX]$.  As before, we shall omit the dependence on $f$ and $y$ from the notation and write $R(z)$ instead of $[R(z)f](y)$. This simplification of notation will be employed wherever it is appropriate.

Recall from Proposition \ref{pro:holoextRF}  that on $\C^-$ (and indeed on a larger domain) we can write
\begin{equation}
\label{eq:holoextRF-mod}
R(z)=H(\rhoX^{-1}z)+\frac{\pi i}{|W| \rhoXtwo}\, F(\rhoX^{-1}z) \qquad (z\in \C^-),
\end{equation}
where $H$ is a holomorphic function. The required meromorphic extension will be therefore deduced from that
of $F$ in  Section \ref{subsection:mero-ext-F}.

For any fixed integer $N\geq 0$ consider the Riemann surface above
$$\C^-_{N}=\big\{z \in \C^-;\Im z >-(N+\tfrac{3}{2})\rhoX\big\}$$
defined by
\begin{multline*}
\M_{(\X,N)}
=\left\{(z,\zeta);\ z\in
\C^-_{N}, \, \zeta=(\zeta_0, \zeta_1,\dots, \zeta_N)\in \C^{N+1},\right.\\
\left.  \zeta_n^2=\left(\tfrac{i}{z}\big(n+\tfrac{1}{2}\big)\right)^2-\tfrac{1}{\rhoXtwo}\,, \; 0\leq n\leq N\right\}.
\end{multline*}
Hence $(z,\zeta)\in \M_{(\X,N)}$ if and only if $(\rhoX^{-1}z,\rhoX\zeta)\in \M_{(N)}$.
Then the curve
$$v \in \big(0,N+\tfrac{3}{2}\big)\mapsto -i\rhoX v\in i\R$$
on the negative imaginary axis
lifts to the curve $\gamma_{\X,N}$ on the Riemann surface $\M_{(\X,N)}$ given by
\begin{equation}
\label{eq:gammaXN}
\gamma_{\X,N}(v)=(-i\rhoX v, \rhoX^{-1}\zeta_0^\pm(-iv),\dots,\rhoX^{-1}\zeta_N^\pm(-iv))\qquad (0<v<N+\tfrac{3}{2})\,,
\end{equation}
where $\zeta_n^+(-iv)=-\zeta_n^-(-iv)$ is as in (\ref{eq:zetan+-iv}).

From each object constructed for $\M_{(N)}$ we obtain a corresponding object for $\M_{(\X,N)}$
by replacing $(z,\zeta)$ with $(\rhoX^{-1}z,\rhoX\zeta)$. It will be denoted by adding ``$\X$'' to the symbol used in Section \ref{subsection:mero-ext-F} for the corresponding object
in $\M_{(N)}$. For instance, for $\eps\in \{\pm 1\}^{N+1}$ and $n\in \{-1,0,1,\dots,N\}$, the sets $U_{\X,n,\eps}$ and $U_{\X,\eps(n^\vee)}$ consist of the
$(z,\zeta)\in \M_{(\X,N)}$ so that the points $(\rhoX^{-1}z,\rhoX\zeta)$ belong to  $U_{n,\eps}$ and $U_{\eps(n^\vee)}$, respectively.
Similarly,
$$
\M_{(\gamma_{\X,N})}=\{(z,\zeta)\in \M_{(\X,N)}; (\rhoX^{-1}z,\rhoX\zeta)\in \M_{(\gamma_N)}\}\,.
$$
Also, for $n\in \{-1,0,1,\dots,N\}$,
$$
W_{(\X,n)}=\rhoX^{-1}W_{(n)}\,, \quad W'_{(\X,n)}=\rhoX^{-1}W'_{(n)}\,,
\quad W_{\X,n+\frac{1}{2}}=\rhoX^{-1} W_{n+\frac{1}{2}}\,.
$$
Hence $W_{(\X,n)}=W_{\X,n+\frac{1}{2}}\cup W'_{(\X,n)}$ is an open neighborhood in $\C^-_N$ of the interval $-i\rhoX \big[n+\frac{1}{2},n+\frac{3}{2}\big)$ if $n\neq -1$, and of
$-i\big(0,\frac{\rhoX }{2}\big)$ if $n=-1$.

Let $\pi_{(\X,N)}: \M_{(\X,N)} \to \C_N^-$ denote the projection $\pi_{(\X,N)}((z,\zeta))=z$.
The Riemann surface $\M_{(\X,N)}$ admits branching points at the $2^N$ points above $z^{(n)}=-i\big(n+\frac{1}{2}\big)\rhoX$, where $n\in \{0,1,\dots,N\}$.
The open disk $W_{\X,n+\frac{1}{2}}$ is centered at $z^{(n)}$.
Set
\begin{equation}
\label{eq:Espn}
\mathcal E_n=\{\eps \in \{\pm 1\}^{N+1}; \eps_n=1\}\,.
\end{equation}
Then the sets $U_{\X,\eps(n^\vee)}$ with $\eps\in \mathcal E_n$ are pairwise disjoint and form an open cover of $\pi_{(\X,N)}^{-1}\Big(W_{\X,n+\frac{1}{2}}\Big)$.
For $\eps \in \mathcal E_n$, we denote by $(z^{(n)},\zeta^{(n,\eps)})$ the point of
the fiber  $\pi_{(\X,N)}^{-1}(z^{(n)})$ which belongs to $U_{\X,\eps(n^\vee)}$.
Moreover, the map
\begin{equation}\label{chart-n}
\kappa_{n,\eps}:U_{\X,\eps(n^\vee)} \ni (z,\zeta)\to \zeta_n\in \C\setminus
i\big(\big(-\infty, -\rhoX^{-1}\big] \cup \big[\rhoX^{-1},+\infty\big)\big),\ \
z=-\frac{i\rhoX(n+\frac{1}{2})}{\sqrt{\rhoXtwo\zeta_n^2+1}}
\end{equation}
is a local chart around $(z^{(n)},\zeta^{(n,\eps)})$.

Observe also that,
as in (\ref{eq:MNgamma}),
\begin{equation}
\label{eq:MgammaXN}
\M_{(\gamma_{\X,N})}=\pi_{(\X,N)}^{-1}\Big(\bigcup_{n=-1}^N W_{(\X,n)}\Big)
\end{equation}
is an open neighborhood of the curve $\gamma_{\X,N}$ in $\M_{(\X,N)}$. Moreover, for every
$n\in \{-1,0,\dots,N\}$, we have
$$
\pi_{(\X,N)}^{-1}(W_{(\X,n)})=\bigcup_{\eps\in \{\pm 1\}^{N+1}} (U_{\X,n,\eps} \cup U_{\X,\eps(n^\vee)})\,.
$$
The resolvent $R$ can be lifted along the curve $\gamma_{\X,N}$ to a meromorphic function on $\M_{(\gamma_{\X,N})}$. The meromorphically lifted function admits singularities at the $(N+1)\times 2^N$ branching points $(z^{(n)},\zeta^{(n,\eps)})$ with $(n,\eps)\in \{0,1,\dots,N\}\times \mathcal E_n$. They are simple poles.  The precise situation is given by the following theorem. Recall the notation $\bxi=e^{i\pi/3}$ and the functions $\psi_z$ and $\phi_{z,u}$ from (\ref{phiwu}) and (\ref{psiwu}).

\begin{thm}
\label{thm:meroextshiftedLaplacian}
Let $f \in C^\infty_c(\X)$ and $y \in \X$ be fixed. Let $N\in \mathbb N$ and let $\gamma_{\X,N}$ be the curve in $\M_{(\X,N)}$
given by (\ref{eq:gammaXN}). Then the resolvent $R(z)=[R(z)f](y)$ lifts as a meromorphic function to the neighborhood $\M_{(\gamma_{\X,N})}$ of the curve $\gamma_{\X,N}$ in $\M_{(\X,N)}$. We denote the lifted meromorphic function by
$\wt R_{(N)}(z,\zeta)=\big[\wt R_{(N)}(z,\zeta)f\big](y)$.

The singularities of $\wt R_{(N)}$ are simple poles at the points $(z^{(n)},\zeta^{(n,\eps)})\in \M_{(\X,N)}$ with $z^{(n)}=-i(n+\frac{1}{2})\rhoX$ and $(n,\eps)\in \{0,1,\dots,N\}\times \mathcal E_n$. Explicitly, for $(n,\eps)\in \{0,1,\dots,N\}\times \mathcal E_n$,
\begin{eqnarray}
\label{eq:RNnearwn}
\wt R_{(N)}(z,\zeta)=\wt H_{(N,n,\eps)}(z,\zeta)+C \,\wt G_{(\X,n)}(z,\zeta) \qquad
((z,\zeta)\in  U_{\X,n,\eps} \cup U_{\X,\eps(n^\vee)})
\,,
\end{eqnarray}
where $\wt H_{(N,n,\eps)}$ is holomorphic,
$$
C=\frac{12\pi^2}{\rhoXtwo |W|}=\frac{\pi^2}{|W|}
$$
is a constant (independent of $N$, $n$ and $\eps$), and
\begin{eqnarray}\label{eq:wtGnXn}
\wt G_{(\X,n)}(z,\zeta)=
\psi_{z\rhoX^{-1}}\Big(\rhoX\big(\tfrac{i}{z}(n+\tfrac{1}{2})-\zeta_n\big)\Big)
\phi_{z\rhoX^{-1},1}\big(\bxi\rhoX\big(\tfrac{i}{z}(n+\tfrac{1}{2})-\zeta_n\big)\big) \\
\cdot \,
\phi_{z\rhoX^{-1},1}\big(\bxi^2\rhoX \big(\tfrac{i}{z}(n+\tfrac{1}{2})-\zeta_n\big)\big) \frac{\tfrac{i}{z}(n+\tfrac{1}{2})}{-i\pi\zeta_n} \nn
\end{eqnarray}
is independent of $N$ and $\eps$ (but dependent on $f$ and $y$, which are omitted from the notation).
The function $\wt G_{(\X,n)}$ has a simple pole at $(z^{(n)},\zeta^{(n,\eps)})$ for all $\eps\in \mathcal E_n$.

The local expression for $\wt G_{(\X,n)}$ in terms of the chart (\ref{chart-n}) is:
\begin{multline}\label{eq:Gn-cover}
(\wt G_{(\X,n)}\circ \kappa_{n,\eps}^{-1})(\zeta_n)=
\psi_{\frac{i(n+\frac{1}{2})}{\sqrt{\rhoXtwo\zeta_n^2+1}}}
\Big(\sqrt{\rhoXtwo\zeta_n^2+1}+\rhoX\zeta_n\Big)\\
\cdot \phi_{\frac{i(n+\frac{1}{2})}{\sqrt{\rhoXtwo\zeta_n^2+1}},1}
\Big(\bxi\big(\sqrt{\rhoXtwo\zeta_n^2+1}+\rhoX\zeta_n\big)\Big) \\
\cdot \phi_{\frac{i(n+\frac{1}{2})}{\sqrt{\rhoXtwo\zeta_n^2+1}},1}
\Big(\bxi^2\big(\sqrt{\rhoXtwo\zeta_n^2+1}+\rhoX\zeta_n\big)\Big) \cdot
\frac{\sqrt{\rhoXtwo\zeta_n^2+1}}{i\pi\rhoX\zeta_n}.
\end{multline}
Furthermore,
\begin{eqnarray}
 \label{eq:residueGn-cover}
\Res_{\zeta_n=0} \big[(\wt R_{(n)} \circ \kappa_{n,\eps}^{-1})(\zeta_n)f](y)
&=& -\frac{1}{4|W|}
\Big(n+\frac{1}{2}\Big)^2 \big(f \times \varphi_{(n+\frac{1}{2})\rho}\big)(y)\,,
\end{eqnarray}
where
$\varphi_{(n+\frac{1}{2})\rho}$ is the spherical function on $\X$
of spectral parameter $(n+\frac{1}{2})\rho$, see (\ref{eq:phil}).
In particular, $\Res_{\zeta_n=0} \big[(\wt R_{(n)} \circ \kappa_{n,\eps}^{-1})(\zeta_n)f](y)$ is independent of $\eps$.
\end{thm}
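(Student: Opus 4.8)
The plan is to obtain $\wt R_{(N)}$ by transporting the meromorphic lift of $F$ from Theorem~\ref{thm:meroliftF} to $\M_{(\X,N)}$ along the rescaling $(z,\zeta)\mapsto(\rhoX^{-1}z,\rhoX\zeta)$, and then to extract the residue from the explicit residue of $\wt G_{(n)}$ recorded in Lemma~\ref{lift of grn+}. Concretely, Proposition~\ref{pro:holoextRF} gives, on a neighbourhood of $\C^-$, the identity $R(z)=H(\rhoX^{-1}z)+\frac{\pi i}{|W|\rhoXtwo}F(\rhoX^{-1}z)$ with $H$ holomorphic. Since $(z,\zeta)\in\M_{(\X,N)}$ precisely when $(\rhoX^{-1}z,\rhoX\zeta)\in\M_{(N)}$, Theorem~\ref{thm:meroliftF} yields a meromorphic function $(z,\zeta)\mapsto\wt F(\rhoX^{-1}z,\rhoX\zeta)$ on $\M_{(\gamma_{\X,N})}$, holomorphic except for simple poles over $z=-i(m+\tfrac{1}{2})\rhoX$, $0\le m\le N$, while $H(\rhoX^{-1}z)$, viewed on $\M_{(\gamma_{\X,N})}$ through $\pi_{(\X,N)}$, is holomorphic there. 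Their sum is $\wt R_{(N)}$, which proves the existence and (by analytic continuation) uniqueness of the lift, the location of the poles at the branching points $(z^{(n)},\zeta^{(n,\eps)})$, and the indexing of the fibre over $z^{(n)}$ by $\mathcal E_n$.

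For the local form near a pole I would fix $n$ and work on $U_{\X,n,\eps}\cup U_{\X,\eps(n^\vee)}$. In the (rescaled) formula (\ref{eq:widetildeF}) for $\wt F$, all terms except $4\pi i\,\wt G_{(n)}$ evaluated at $(\rhoX^{-1}z,\rhoX\zeta_n)$ are holomorphic near the branching point over $z^{(n)}$: the functions $F_{(m)}$ are holomorphic, and each $\wt G_{(n')}$ with $n'\ne n$ is holomorphic away from the branching points over $\pm i(n'+\tfrac{1}{2})\rhoX$ by Lemma~\ref{lift of grn+} and Remark~\ref{rem:principal branch wtG}. Substituting $z\mapsto\rhoX^{-1}z$, $\zeta\mapsto\rhoX\zeta_n$ into (\ref{tboundedness3}) gives at once $\wt G_{(n)}(\rhoX^{-1}z,\rhoX\zeta_n)=-3\,\wt G_{(\X,n)}(z,\zeta)$ with $\wt G_{(\X,n)}$ as in (\ref{eq:wtGnXn}); hence the singular part of $\wt R_{(N)}$ is $\frac{\pi i}{|W|\rhoXtwo}\cdot 4\pi i\cdot(-3)\,\wt G_{(\X,n)}=\frac{12\pi^2}{\rhoXtwo|W|}\,\wt G_{(\X,n)}$, which gives (\ref{eq:RNnearwn}) with the stated constant $C$ (equal to $\pi^2/|W|$ since $\rhoXtwo=12$) and a simple pole at $(z^{(n)},\zeta^{(n,\eps)})$ for each $\eps\in\mathcal E_n$ because $\wt G_{(\X,n)}$ has one there. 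Inserting $z=-i\rhoX(n+\tfrac{1}{2})/\sqrt{\rhoXtwo\zeta_n^2+1}$ from the chart (\ref{chart-n}) into (\ref{eq:wtGnXn})---so that $\tfrac{i}{z}(n+\tfrac{1}{2})=\sqrt{\rhoXtwo\zeta_n^2+1}$ and $\tfrac{i}{z}(n+\tfrac{1}{2})-\zeta_n=\sqrt{\rhoXtwo\zeta_n^2+1}+\rhoX\zeta_n$ on this sheet, exactly as in the proof of Lemma~\ref{lift of grn+}---produces the chart expression (\ref{eq:Gn-cover}).

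It remains to compute the residue. Holomorphy of $\wt H_{(N,n,\eps)}$ gives $\Res_{\zeta_n=0}\big[(\wt R_{(N)}\circ\kappa_{n,\eps}^{-1})(\zeta_n)f\big](y)=C\,\Res_{\zeta_n=0}(\wt G_{(\X,n)}\circ\kappa_{n,\eps}^{-1})(\zeta_n)$. Under the rescaling, $\kappa_{n,\eps}^{-1}(\zeta_n)$ corresponds to $\kappa_-^{-1}(\rhoX\zeta_n)\in\M_n$ (the chart $\kappa_-$ of Lemma~\ref{lift of grn+}, the one centred at a point of $\C^-$), so that $\wt G_{(\X,n)}\circ\kappa_{n,\eps}^{-1}(\zeta_n)=-\tfrac{1}{3}(\wt G_{(n)}\circ\kappa_-^{-1})(\rhoX\zeta_n)$ and therefore $\Res_{\zeta_n=0}(\wt G_{(\X,n)}\circ\kappa_{n,\eps}^{-1})(\zeta_n)=-\tfrac{1}{3\rhoX}\,\Res_{\zeta=0}(\wt G_{(n)}\circ\kappa_-^{-1})(\zeta)$; by (\ref{eq:residue-n-minus}) the last residue is the explicit multiple of $(n+\tfrac{1}{2})^2\,\psi_{i(n+\tfrac{1}{2})}(1)$. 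The final ingredient is the identification of $\psi_{i(n+\tfrac{1}{2})}(1)$: unwinding (\ref{psiwu}) and (\ref{eq:lzw}) gives $\psi_{i(n+\tfrac{1}{2})}(1)=-\tfrac{1}{2\pi}\big(f\times\tfrac{1}{2}(\varphi_{-(n+\frac{1}{2})\alpha_{1,2}}+\varphi_{(n+\frac{1}{2})\alpha_{1,2}})\big)(y)$, and since $\rho=\alpha_{1,3}$ is a root of $\A_2$ and $\Wg=S_3$ permutes all six roots transitively, both $\pm(n+\tfrac{1}{2})\alpha_{1,2}$ lie in the $\Wg$-orbit of $(n+\tfrac{1}{2})\rho$, so by $\Wg$-invariance of spherical functions the symmetrized convolution collapses to $(f\times\varphi_{(n+\frac{1}{2})\rho})(y)$. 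Assembling all the constants gives (\ref{eq:residueGn-cover}), and since $\wt G_{(\X,n)}$ read in the chart $\kappa_{n,\eps}$ depends only on $\zeta_n$---while the $\eps\in\mathcal E_n$ are distinguished only through the components $\zeta_{n'}$, $n'\ne n$---the residue is independent of $\eps$.

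The step I expect to be the real obstacle is this last one: keeping track of every sign and every factor of $i$, $\pi$ and $\rhoX$ simultaneously through the polar-coordinate reduction of Section~\ref{section:mero-ext}, the two rescalings $z\mapsto\rhoX^{-1}z$ and $\zeta\mapsto\rhoX\zeta$, the distinction between the complex units $i$ and $\compl$, and the chosen branch of the square root, so as to land exactly on the normalization $-\tfrac{1}{4|W|}(n+\tfrac{1}{2})^2$---and, in parallel, correctly reading off that the spectral parameter appearing in the residue is $(n+\tfrac{1}{2})\rho$ and not merely a Weyl translate of it.
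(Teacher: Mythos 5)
Your proposal follows the same line of argument as the paper's proof: reduce to the meromorphic lift $\wt F$ of $F$ via Proposition~\ref{pro:holoextRF}, transport it to $\M_{(\X,N)}$ by the rescaling $(z,\zeta)\mapsto(\rhoX^{-1}z,\rhoX\zeta)$, isolate the singular part as the $4\pi i\,\wt G_{(n)}$ term in (\ref{eq:widetildeF}), convert via $\wt G_{(\X,n)}(z,\zeta)=-\tfrac{1}{3}\wt G_{(n)}(\rhoX^{-1}z,\rhoX\zeta_n)$, and compute the residue from (\ref{eq:residue-n-minus}) together with the identification $\psi_{i(n+\frac{1}{2})}(1)=-\tfrac{1}{2\pi}\big(f\times\varphi_{(n+\frac{1}{2})\rho}\big)(y)$ via $\Wg$-invariance and $\rho=\alpha_{1,3}$. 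All the essential ingredients are correctly identified, including the need for the chain-rule factor $\rhoX^{-1}$ when passing from $\Res_{\zeta=0}$ to $\Res_{\zeta_n=0}$ under $\zeta=\rhoX\zeta_n$, which is easy to overlook.

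One point that needs correction before your argument produces (\ref{eq:Gn-cover}): on the sheet $U_{\X,\eps(n^\vee)}$ given by the chart (\ref{chart-n}), the relation $z=-i\rhoX(n+\tfrac{1}{2})/\sqrt{\rhoXtwo\zeta_n^2+1}$ gives $\tfrac{i}{z}(n+\tfrac{1}{2})=-\rhoX^{-1}\sqrt{\rhoXtwo\zeta_n^2+1}$, not $\sqrt{\rhoXtwo\zeta_n^2+1}$ as you wrote, and consequently $\rhoX\big(\tfrac{i}{z}(n+\tfrac{1}{2})-\zeta_n\big)=-\big(\sqrt{\rhoXtwo\zeta_n^2+1}+\rhoX\zeta_n\big)$, with an overall minus sign. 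The paper records this as $i\tfrac{\rhoX}{z}(n+\tfrac{1}{2})=-\sqrt{\rhoXtwo\zeta_n^2+1}$ and then removes the sign using the parity relations (\ref{even phiwu}) and (\ref{even phiwu bis}) for $\psi_z$ and $\phi_{z,1}$, as is also done in deriving (\ref{expression in terms of charts 2}). Your formula as stated would not reproduce (\ref{eq:Gn-cover}) without this correction and the parity step, though the end result is the same. Apart from this algebraic slip, your proof mirrors the one in the paper.
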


\begin{proof}
According to Corollary \ref{cor:logRiemann} and Proposition \ref{pro:holoextRF}, it suffices to meromorphically extend the
function $F$ given by (\ref{eq:F}), as done in the Section \ref{subsection:mero-ext-F}.
The lift $\wt R_{(N)}$ of the resolvent  to $\M_{(\gamma_{\X,N})}$ as well as the expression of  $\wt G_{(\X,n)}$ are then obtained from Theorem \ref{thm:meroliftF},
with $(z,\zeta)$ replaced by $(\rhoX^{-1} z,\rhoX\zeta)$.
In fact, the function $\wt H_{(N,n,\eps)}(z,\zeta)$ in (\ref{eq:RNnearwn}) is the sum of the holomorphic function $H(\rhoX^{-1} z)$ from Proposition \ref{pro:holoextRF} together with  $\frac{\pi i}{|W| \rhoXtwo}$ times
the sum of all terms in the definition of $\wt F(\rhoX^{-1} z,\rhoX \zeta)$ on $U_{\X,\eps(n^\vee)}\cup U_{\X,n,\eps}\,$, as
on the right-hand side of (\ref{eq:widetildeF}) with $n$ instead of $m$, except for $4\pi i \, \wt G_{(n)}(\rhoX^{-1} z,\rhoX \zeta)$. Thus $\wt H_{(N,n,\eps)}$ is holomorphic on $U_{\X,\eps(n^\vee)}\cup U_{\X,n,\eps}\,$.
Finally,
\begin{equation}
\label{eq:GntGn}
\wt G_{(\X,n)}(z,\zeta)=-\tfrac{1}{3}\,\wt G_{(n)}(\rhoX^{-1} z,\rhoX\zeta_n)\,.
\end{equation}

Notice that for $(z,\zeta)$ in the neighborhood $U_{\X,\eps(n^\vee)}$  of
$(z^{(n)},\zeta^{(n,\eps)})$ we have
$$
i \frac{\rhoX}{z}\big(n+\tfrac{1}{2}\big)=-\sqrt{\rhoXtwo\zeta_n^2+1}\,.
$$
The expression (\ref{eq:Gn-cover}) for $\wt G_{(\X,n)}$ in the chart $(U_{\X,\eps(n^\vee)},\kappa_{n,\eps})$ comes then from (\ref{eq:GntGn}) together with the parity properties (\ref{even phiwu}) and  (\ref{even phiwu bis})  of the functions $\psi_z$ and $\phi_{z,1}$, as for (\ref{expression in terms of charts 2}).

To compute the residue (\ref{eq:residueGn-cover}), we have,
by (\ref{eq:RNnearwn}),
\begin{equation}
\label{eq:forresidue1}
\Res_{\zeta_n=0} (\wt R_{(N)} \circ \kappa_{n,\eps}^{-1})(\zeta_n)=C \, \Res_{\zeta_n=0} (\wt G_{(\X,n)}\circ\kappa_{n,\eps}^{-1})(\zeta_n) \,.
\end{equation}
By (\ref{eq:residue-n-minus}) and (\ref{eq:GntGn}),
\begin{equation}
\label{eq:forresidue2}
\Res_{\zeta_n=0} \;(\wt G_{(\X,n)}\circ\kappa_{n,\eps}^{-1})(\zeta_n)=
\frac{1}{2\pi} \Big(n+\frac{1}{2}\Big)^2
\psi_{i(n+\tfrac{1}{2})}(1)\,.
\end{equation}
Moreover, (\ref{psiwu}) yields
$$
\psi_{i(n+\tfrac{1}{2})}(1)=\frac{1}{2\pi}\,\Big(f\times
\frac{\varphi_{\compl \l(i(n+\frac{1}{2}),1)}+\varphi_{-\compl\l(i(n+\frac{1}{2}),1)}}{2} \Big)(y)\,.
$$
By definition, see (\ref{eq:lzw}), we have $\l(i(n+\tfrac{1}{2}),1)=\compl \l(n+\tfrac{1}{2},1)$. Furthermore, by (\ref{eq:lpolar})
and (\ref{eq:alpha-e}),
$$
\l(n+\tfrac{1}{2},1)=(n+\tfrac{1}{2})e_1=(n+\tfrac{1}{2})\alpha_{1,2}\,.
$$
Recall that the spherical functions $\varphi_\l$ are Weyl group invariant in the spectral parameter $\l\in\mathfrak{a}_\C^*$. Recall also that the Weyl group $\Wg$ permutes all roots, and more precisely it acts on $\alpha_{i,j}$ by permuting the indices $i,j \in \{1,2,3\}$.
Since $\rho=\alpha_{1,3}$ is a root,
we conclude that
$$
\varphi_{\pm \compl \l(i(n+\frac{1}{2}),1)}=\varphi_{\pm \compl^2 \l((n+\frac{1}{2}),1)}=
\varphi_{\mp (n+\frac{1}{2})\alpha_{1,2}}=\varphi_{(n+\frac{1}{2})\rho}\,.$$
Therefore
\begin{equation}
\label{eq:forresidue3}
\psi_{i(n+\tfrac{1}{2})}(1)=
-\,\frac{1}{2\pi} \big(f \times \varphi_{(n+\frac{1}{2})\rho}\big)(y)\,.
\end{equation}
The residue (\ref{eq:residueGn-cover}) follows then from (\ref{eq:forresidue1}), (\ref{eq:forresidue2}) and (\ref{eq:forresidue3}).
\end{proof}

\section{The residue operators}\label{section:residues}

Theorem \ref{thm:meroextshiftedLaplacian} gives the meromorphic lift of the resolvent
of the (shifted) positive Laplace-Beltrami operator $\Delta-\inner{\rho}{\rho}$ of $\X$ to the Riemann surface $\M_{(\X,N)}$ covering $\C^-_N=\{z \in \C^-: \Im z >-(N+\frac{3}{2})\rhoX\}$, where $N$ be any fixed nonnegative integer. For fixed $f\in C^\infty_c(\X)$ and $y \in \X$, the lifted resolvent $\wt R_{(N)}$
admits simple poles at the branching points of $\M_{(\X,N)}$, that is at the points
$(z^{(n)},\zeta^{(n,\eps)})$ with $(n,\eps)\in \{0,1,\dots,N\}\times \{\pm 1\}^{N+1}$.
The singular part  of the function $(z,\zeta) \to [\wt R_{(N)}(z,\zeta)f](y)$ at $(z^{(n)},\zeta^{(n,\eps)})$ is a constant multiple the function $\wt G_{(\X,n)}(z,\zeta)$ defined in (\ref{eq:GntGn}). It is independent of the choice of
$N\geq n$ and of the singular point $(z^{(n)},\zeta^{(n,\eps)})$ in the fiber above $z^{(n)}=-i\rhoX(n+\frac{1}{2})$ in $\M_{(\X,N)}$.
In terms of the canonical chart $(U_{\X,\eps(n^\vee)},\kappa_{n,\eps})$ around $(z^{(n)},\zeta^{(n,\eps)})$, the residue of $\wt R_{(N)}(z,\zeta)$ at $(z^{(n)},\zeta^{(n,\eps)})$, computed in (\ref{eq:residueGn-cover}), is a smooth function of $y \in X$ depending linearly on $f \in C^\infty_c(X)$. We use the value of this residue to define a residue operator at $z^{(n)}=-i\rhoX(n+\frac{1}{2})$. More precisely, we define the residue operator
\begin{equation}
\label{eq:ResnR}
\Resn: C^\infty_c(\X) \to C^\infty(\X)
\end{equation}
by
\begin{equation}
\label{eq:ResnRf}
\Resn f= \Big(n+\frac{1}{2}\Big)^2 \big(f \times \varphi_{(n+\frac{1}{2})\rho}\big) \qquad (f \in C^\infty_c(\X))\,.
\end{equation}

In this section we study the range of the linear operator $\Resn$ from a representation theoretic point of view.

\subsection{Residue operators and eigenspace representations}
\label{subsection:residue-eigenspace}

Let $\l\in \fraka_\C^*$. We consider the convolution operator
\begin{equation}
\label{eq:convolution-lambda}
\mathcal R_\lambda: C_c^\infty(\X) \to C^\infty(\X)
\end{equation}
defined by
\begin{equation}
\label{eq:convolution-lambdaf}
\mathcal R_\lambda(f)=f \times \varphi_\lambda \qquad  (f \in C^\infty_c(\X))\,,
\end{equation}
where, as before, $\varphi_\lambda$ denotes
Harish-Chandra's spherical function of spectral parameter $\lambda$. We keep the notation on
eigenspace representations introduced in Section \ref{subsection:eigenspace-reps}.

The next proposition holds for arbitrary Riemannian symmetric spaces of the noncompact type $\X=\G/\K$, where $\G$ is a noncompact connected semisimple Lie group with finite center and $\K$ a maximal compact subgroup of $\G$.  It characterizes the closure of $\mathcal R_\lambda(C^\infty_c(\X))$ inside the eigenspace representation space
$\mathcal E_\l(\X)$ and gives a necessary and sufficient condition for $\mathcal R_\lambda(C^\infty_c(\X))$ to be finite dimensional.

\begin{pro}
\label{prop:eigenspaces}
The space $\mathcal R_\lambda(C^\infty_c(\X))=\{f \times \varphi_\lambda: f \in C^\infty_c(\X)\}$ is a non-zero
$T_\lambda$-invariant subspace of $\mathcal E_\lambda(\X)$. Its closure is the unique closed  irreducible subspace $\mathcal E_{(\lambda)}(\X)$ of $\mathcal E_\lambda(\X)$, which is generated by the translates of the spherical function $\varphi_\lambda$. The space $\mathcal R_\lambda(C^\infty_c(\X))$ is finite dimensional if and only if
$\mathcal E_{\lambda,\G}(\X)\neq \{0\}$ is the finite dimensional spherical representation of highest
restricted weight $-w\lambda-\rho$ (for some $w \in \W$, the Weyl group). In the latter case, $\mathcal R_\lambda(C^\infty_c(\X))=\mathcal E_{\lambda,\G}(\X)$.
\end{pro}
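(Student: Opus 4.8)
The plan is to prove the four assertions of the proposition in turn, using the material on eigenspace representations recalled in Section~\ref{subsection:eigenspace-reps} together with Proposition~\ref{prop:sphrep}. First I would check that $\mathcal R_\lambda(C^\infty_c(\X))$ is a non-zero $T_\lambda$-invariant subspace of $\mathcal E_\lambda(\X)$. Membership in $\mathcal E_\lambda(\X)$ follows by differentiating under the convolution integral: since every $D \in \mathbb D(\X)$ is $\G$-invariant and $\varphi_\lambda$ is $\K$-invariant, one has $D(f\times\varphi_\lambda)=f\times(D\varphi_\lambda)=\gamma(D)(\lambda)(f\times\varphi_\lambda)$. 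Invariance follows because the left $\G$-action on $f$ intertwines convolution by $\varphi_\lambda$ with $T_\lambda$: $T_\lambda(g)(f\times\varphi_\lambda)=(L_gf)\times\varphi_\lambda$, where $L_gf(x)=f(g^{-1}x)$ again lies in $C^\infty_c(\X)$. For non-vanishing I would let $f$ run through an approximate identity at the base point $o$, so that $f\times\varphi_\lambda\to\varphi_\lambda$ in $C^\infty(\X)$ while $\varphi_\lambda(o)=1\neq 0$; alternatively, for $\K$-invariant $f$ one has $f\times\varphi_\lambda=c_f(\lambda)\varphi_\lambda$ with $c_f(\lambda)$ the spherical transform of $f$, which by the Paley--Wiener theorem is nonzero at $\lambda$ for a suitable $f$.

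Next I would identify the closure of $\mathcal R_\lambda(C^\infty_c(\X))$. By definition $\mathcal E_{(\lambda)}(\X)$ is the closed subspace of $\mathcal E_\lambda(\X)$, in its $C^\infty$ topology, generated by the translates $T_\lambda(g)\varphi_\lambda$. On one side, writing $f\times\varphi_\lambda=\int_\G f(\pi(g))\,T_\lambda(g)\varphi_\lambda\,dg$ as a $C^\infty(\X)$-convergent vector-valued integral (the integrand is compactly supported and continuous in $g$) exhibits $\mathcal R_\lambda(C^\infty_c(\X))$ inside that closed span. On the other side, letting $f$ run through an approximate identity concentrating at $\pi(g_0)$ gives $f\times\varphi_\lambda\to T_\lambda(g_0)\varphi_\lambda$ in $C^\infty(\X)$, so every translate of $\varphi_\lambda$, and hence all of $\mathcal E_{(\lambda)}(\X)$, lies in $\overline{\mathcal R_\lambda(C^\infty_c(\X))}$. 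Thus $\overline{\mathcal R_\lambda(C^\infty_c(\X))}=\mathcal E_{(\lambda)}(\X)$, which by the cited results is the unique closed irreducible subspace of $\mathcal E_\lambda(\X)$; this gives the first two assertions.

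For the finite-dimensionality criterion I would argue both directions. If $\mathcal R_\lambda(C^\infty_c(\X))$ is finite dimensional, then being $T_\lambda$-invariant it consists of $\G$-finite vectors, so $\{0\}\neq\mathcal R_\lambda(C^\infty_c(\X))\subseteq\mathcal E_{\lambda,\G}(\X)$; by Proposition~\ref{prop:sphrep}, $\mathcal E_{\lambda,\G}(\X)$ is then the finite-dimensional irreducible spherical representation of highest restricted weight $-w\lambda-\rho$ for some $w\in\W$, and a non-zero invariant subspace of an irreducible space is all of it, so $\mathcal R_\lambda(C^\infty_c(\X))=\mathcal E_{\lambda,\G}(\X)$. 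Conversely, if $\mathcal E_{\lambda,\G}(\X)\neq\{0\}$, then by Proposition~\ref{prop:sphrep} it is a finite-dimensional, hence closed, irreducible $\G$-subspace of $\mathcal E_\lambda(\X)$, so by uniqueness $\mathcal E_{\lambda,\G}(\X)=\mathcal E_{(\lambda)}(\X)$; by the previous paragraph $\mathcal R_\lambda(C^\infty_c(\X))$ is dense in this finite-dimensional space, and a subspace of a finite-dimensional space is closed, so $\mathcal R_\lambda(C^\infty_c(\X))=\mathcal E_{(\lambda)}(\X)=\mathcal E_{\lambda,\G}(\X)$ and in particular is finite dimensional.

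I expect the only delicate points to be the two approximate-identity limits and the convergence of the vector-valued integral, all of which are standard facts about convolving smooth functions on a manifold (the approximate identities are taken in $C^\infty_c(\X)$, and convolution with them converges locally in $C^\infty$ together with all derivatives). The main thing to be careful about is to keep the topology on $\mathcal E_\lambda(\X)$, namely the $C^\infty$ topology, explicit throughout, so that "the closed subspace generated by the translates" is unambiguous and so that the density-plus-finite-dimensionality step at the end is legitimate.
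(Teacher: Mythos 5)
Your proof is correct and follows the same overall plan as the paper's (membership in $\mathcal E_\lambda(\X)$, $T_\lambda$-invariance, identification of the closure, then the finite-dimensionality equivalence), but two intermediate steps are handled differently. For $\overline{\mathcal R_\lambda(C^\infty_c(\X))}=\mathcal E_{(\lambda)}(\X)$, the paper only proves the inclusion $\overline{\mathcal R_\lambda(C^\infty_c(\X))}\subseteq\mathcal E_{(\lambda)}(\X)$ and then invokes irreducibility of $\mathcal E_{(\lambda)}(\X)$ (the closure being a non-zero closed $T_\lambda$-invariant subspace), whereas you establish the reverse inclusion directly by the approximate-identity limit $f\times\varphi_\lambda\to T_\lambda(g_0)\varphi_\lambda$; both work, and the paper's version is slightly shorter since it avoids checking the $C^\infty$ convergence of that limit. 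For the converse half of the finite-dimensionality criterion, the paper simply cites Theorem 3.2 of Hilgert--Pasquale, while you deduce it directly from the closure identification: $\mathcal E_{\lambda,\G}(\X)$ is finite-dimensional hence closed and irreducible, so it must be $\mathcal E_{(\lambda)}(\X)$ by uniqueness, and a dense subspace of a finite-dimensional space is the whole space. That is a valid and more self-contained argument than the one the authors give, and it is worth noting that it removes the external dependency at no real cost.
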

\begin{proof}
Observe first that $\mathcal R_\lambda(C^\infty_c(\X))\neq \{0\}$ as $\varphi_\lambda$ is nonzero and continuous.

For all $D \in \mathbb{D}(\X)$ we have $D(f\times \varphi_\lambda)=f \times
D\varphi_\lambda=\gamma(D)(\lambda)(f \times \varphi_\lambda)$. See \cite[Ch. II, Theorem 5.5]{He2}. So $\mathcal R_\lambda(C^\infty_c(\X))\subseteq \mathcal E_\lambda(\X)$.

Let $g \in G$, and let $F^{\tau(g)}$ denote the left translate by
$g$ of the function $F:\X\to \C$. Hence, if $o=e\K$ is the base point of $\G/\K$, then $F^{\tau(g)}(h\cdot o)=F(g^{-1}h\cdot o)$ for all $h \in \G$. We have
$$T_\lambda(g)(f\times \varphi_\lambda)=(f \times \varphi_\lambda)^{\tau(g)}=f^{\tau(g)} \times \varphi_\lambda\,.$$
As $f^{\tau(g)} \in C_c^\infty(\X)$, the subspace $\mathcal R_\lambda(C^\infty_c(\X))$ of $\mathcal E_\lambda(\X)$ is $T_\lambda$-invariant.

By definition, for $f\in C_c^\infty(\X)$,
$$
(f \times \varphi_\l)(x)=\int_\G f(g\cdot o) \varphi_\l(g^{-1}\cdot x)\, dg=
\int_\G f(g\cdot o)\varphi_\l^{\tau(g)}(x)\, dg \qquad(x\in \X)
$$
belongs to $\mathcal E_{(\lambda)}(\X)$, the closure of the subspace of $\mathcal E_\lambda(\X)$ spanned by the
left translates of $\varphi_\l$. So $\overline{\mathcal R_\lambda(C^\infty_c(\X))}\subseteq \mathcal E_{(\lambda)}(\X)$ is non-zero, closed and $T_\l$-invariant. Since $\mathcal E_{(\lambda)}(\X)$ is irreducible (see e.g. \cite[Ch. IV, Theorem 4.5]{He2}), they must agree.

If $\mathcal R_\lambda(C^\infty_c(\X))$ is finite dimensional, then its nonzero elements are $\G$-finite, so
$\mathcal E_{\lambda,\G}(\X)\neq \{0\}$. By Proposition \ref{prop:sphrep} we conclude that $w\lambda-\rho$ is the restricted highest weight of a finite-dimensional spherical representation for some $w \in \W$. Moreover $\mathcal R_\lambda(C^\infty_c(\X))=\mathcal E_{\lambda,\G}(\X)$ by irreducibility.

Conversely, suppose $\mathcal E_{\lambda,\G}(\X)\neq \{0\}$ is the finite dimensional spherical representation of highest restricted weight $w\lambda-\rho$ for some $w \in \W$. Then
$\mathcal R_\lambda(C^\infty_c(\X))=\mathcal E_{\lambda,\G}(\X)$ by \cite[Theorem 3.2]{HP09}. In particular
$\mathcal R_\lambda(C^\infty_c(\X))$ is finite dimensional.
\end{proof}

\begin{cor}\label{cor:resonances are Poisson-singular}
For all $n \in \mathbb{N}$  the eigenspace representation
$T_{(n+\frac{1}{2})\rho}$ of $\G=\SL(3,\R)$ on $\mathcal E_{(n+\frac{1}{2})\rho}(\X)$ is reducible.
The closure of the image $\Resn (C_c^\infty(\X))$ of the residue operator is the infinite dimensional irreducible  subspace $\mathcal E_{((n+\frac{1}{2})\rho)}(\X)$ of $\mathcal E_{(n+\frac{1}{2})\rho}(\X)$ spanned by the translates of the spherical function $\varphi_{(n+\frac{1}{2})\rho}$. In particular, the residue operator  $\Resn$ has infinite rank for all $n \in \mathbb{N}$.
\end{cor}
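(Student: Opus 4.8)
The plan is to deduce everything from Proposition~\ref{prop:eigenspaces} once the relevant spectral parameter has been identified; put $\lambda=(n+\tfrac{1}{2})\rho\in\fraka^*$ throughout, this being the parameter occurring in the definition of $\Resn$. First I would record the numbers $\lambda_\alpha=(n+\tfrac{1}{2})\rho_\alpha$ for the positive roots. From $\inner{\alpha}{\alpha}=12$ for all $\alpha\in\Sigma$, $\inner{\alpha_{1,2}}{\alpha_{2,3}}=-6$ and $\rho=\alpha_{1,3}=\alpha_{1,2}+\alpha_{2,3}$ one reads off $\rho_{\alpha_{1,2}}=\rho_{\alpha_{2,3}}=\tfrac{1}{2}$ and $\rho_{\alpha_{1,3}}=1$, hence $\lambda_{\alpha_{1,3}}=n+\tfrac{1}{2}\in\mathbb Z+\tfrac{1}{2}$. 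By the reducibility criterion recorded just after \eqref{eq:gammaX-SL3} (equivalently, $\lambda$ is then a singularity of the Plancherel density \eqref{eq:Plancherel1}), the eigenspace representation $T_{(n+\frac{1}{2})\rho}$ of $\SL(3,\R)$ is reducible. This gives the first assertion.

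Next I would apply Proposition~\ref{prop:eigenspaces} with this $\lambda$. Since $\Resn f=(n+\tfrac{1}{2})^2\,(f\times\varphi_{(n+\frac{1}{2})\rho})$ and $(n+\tfrac{1}{2})^2\neq 0$, the image $\Resn(C_c^\infty(\X))$ coincides with $\mathcal R_{(n+\frac{1}{2})\rho}(C_c^\infty(\X))$; Proposition~\ref{prop:eigenspaces} then says directly that its closure is the unique closed irreducible subspace $\mathcal E_{((n+\frac{1}{2})\rho)}(\X)$ of $\mathcal E_{(n+\frac{1}{2})\rho}(\X)$, generated by the translates of $\varphi_{(n+\frac{1}{2})\rho}$.

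It remains to see that this space is infinite dimensional, equivalently that $\Resn$ has infinite rank. By the finite-dimensionality part of Proposition~\ref{prop:eigenspaces} it is enough to check $\mathcal E_{(n+\frac{1}{2})\rho,\G}(\X)=\{0\}$. Since $\rho$ is strictly dominant and $n+\tfrac{1}{2}>0$, the parameter $\lambda=(n+\tfrac{1}{2})\rho$ is dominant, so by Remark~\ref{rem:hw} (case $\G=\SL(3,\R)$) the nonvanishing of $\mathcal E_{\lambda,\G}(\X)$ is equivalent to $\lambda_{\alpha_{1,2}},\lambda_{\alpha_{2,3}}\in\mathbb N+\tfrac{1}{2}$; alternatively this can be taken straight from Proposition~\ref{prop:sphrep}. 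But $\lambda_{\alpha_{1,2}}=\lambda_{\alpha_{2,3}}=\tfrac{2n+1}{4}$, and $\tfrac{2n+1}{4}-\tfrac{1}{2}=\tfrac{2n-1}{4}\notin\mathbb N$ because $2n-1$ is odd; hence $\mathcal E_{(n+\frac{1}{2})\rho,\G}(\X)=\{0\}$, so $\Resn(C_c^\infty(\X))$ is infinite dimensional and $\Resn$ has infinite rank. I do not expect a genuine obstacle here: the whole argument is a lookup into Propositions~\ref{prop:sphrep} and~\ref{prop:eigenspaces}, and the only delicate point is keeping the normalizations straight --- in particular the value $\rho_{\alpha_{1,3}}=1$ forcing reducibility, and the observation that $\lambda_{\alpha_{1,2}}=\tfrac{2n+1}{4}$ is never a half-integer, which is exactly what separates the resonance parameters $(n+\tfrac{1}{2})\rho$ from the parameters of the finite-dimensional spherical representations responsible for the finite-rank residues in the rank-one situation.
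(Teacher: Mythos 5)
Your proposal is correct and follows essentially the same route as the paper: reducibility from $\lambda_{\alpha_{1,3}}=n+\tfrac12\in\mathbb Z+\tfrac12$ via \eqref{eq:gammaX-SL3}, identification of the closed range via Proposition~\ref{prop:eigenspaces}, and infinite dimensionality by ruling out $\mathcal E_{\lambda,\G}(\X)\neq\{0\}$ using Remark~\ref{rem:hw} and the computation $\lambda_{\alpha_{1,2}}=\lambda_{\alpha_{2,3}}=\tfrac{2n+1}{4}\notin\mathbb N+\tfrac12$. The only cosmetic difference is that you invoke dominance of $\lambda$ up front to reduce the Weyl-group quantifier, which is a clean way to phrase what the paper does slightly more tersely.
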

\begin{proof}
Since $\rho=\alpha_{1,3}$, we have for all $n \in \mathbb{N}$ that
$$(n+\tfrac{1}{2})\rho_{1,3}=n+\tfrac{1}{2} \in \mathbb{Z}+\tfrac{1}{2}\,.$$
(Recall from Section \ref{subsection:spherical-reps} that $\rho_{i,j}=\rho_{\alpha_{i,j}}$.)
Thus $T_{(n+\frac{1}{2})\rho}$ is reducible by (\ref{eq:gammaX-SL3}).

Because of Proposition \ref{prop:eigenspaces}, it remains to prove that $\Resn (C_c^\infty(\X))$ is infinite
dimensional. For this, it is enough to check that for every Weyl group element $w$ there is a root $\alpha\in \Sigma^+$ so that $-(n+\frac{1}{2}) (w\rho)_\alpha\notin \mathbb{N}+\frac{1}{2}$. In turn, by Remark \ref{rem:hw}, it suffices to check that $(n+\frac{1}{2}) \rho_{i,j} \notin \mathbb{Z}$ when $n\in \mathbb N$ and $(i,j)=(1,2)$ or $(i,j)=(2,3)$. This is immediate, since $\rho_{1,2}=\rho_{2,3}=\frac{1}{2}$.
\end{proof}

\begin{rem}
\label{rem:Poisson}
As before, let $\B=\K/\M$. The Poisson transform of $h\in C(\B)$ is the function $\mathcal P_\l h:\G/\K \to \C$
defined by
\begin{equation*}
(\mathcal P_\l h)(y)=\int_\B h(b)e_{\l,b}(y)\, db\qquad (y\in \G/\K)\,,
\end{equation*}
see e.g. \cite[Ch. II, \S 3, no. 4, and \S 5, no. 4]{He3}.
According to (\ref{eq:conv-with-phil}), the range of the residue operator $\Resn$ is the image under the Poisson tranform of the elements of the Paley-Wiener space $\mathcal H(\fraka_\C^* \times \B)_\W$,
see Section \ref{subsection:spherical-analysis}, evaluated at $\l=(n+\frac{1}{2})\rho$ and considered as a function of $b\in \B$. 
\end{rem}

\subsection{Residue operators and Langlands' classification}
\label{subsection:residues-Langlands}

In this section we give a description of the $\SL(3,\R)$-action on the range of the residue operator in terms of Langlands' classification. We will identify all infinitesimally equivalent representations of the group
$\G=\SL(3,\R)$.

Recall the Iwasawa decomposition $\G=\K\A\N$,
\[
x=\kappa(x)a(x)n(x) \qquad(x\in\G),
\]
from Section \ref{subsection:structureSL3}. Let $\Pg=\M\A\N\subseteq\G$ be the minimal parabolic subgroup consisting of matrices with zeros below the diagonal. For a fixed $\lambda\in\a_\C^*$ the spherical non-unitary principal series representation, $\Ind_\Pg^\G(1\otimes e^{\lambda})$, is a representation of $\G$ defined on the Hilbert space $\mathcal H_\Pi\subseteq\L^2(\K)$ consisting of the right $\M$-invariant functions, with the group action given by
\[
[\Pi(g)v](k)=a(g^{-1}k)^{-\lambda-\rho}v(\kappa(g^{-1}k)) \qquad (g\in\G,\ k\in\K).
\]
This representation has precisely one irreducible subquotient, $\overline \pi(1\otimes(\lambda))(1)$, which contains a trivial $\K$-type, see \cite[Def. 4.4.6]{Vogan81}. If $\Re \lambda$ is negative, then $\overline \pi(1\otimes(\lambda))(1)$ is a subrepresentation of $\Ind_\Pg^\G(1\otimes e^{\lambda})$.

Also, recall that the group $\G$ has only three nilpotent orbits in $\g$ (see e.g. \cite{CMcG}). They are indexed by the partitions of $3$: the maximal orbit $\mathcal O_{1,1,1}$, the minimal orbit $\mathcal O_{2,1}$ and the zero orbit $\mathcal O_{1^3}=\{0\}$.
\begin{pro}\label{The range of the residue operator}
As a representation of $\G$,  the range of the residue operator $\Resn$ is infinitesimally equivalent to $\overline \pi(1\otimes(n+\frac{1}{2}))(1)$. This is a proper infinite dimensional subrepresentation of the non-unitary principal series. This representation  is unitarizable if and only if $n=0$. The wave front set of each of these representations, see \cite{Ross95} for the definition, is equal to $\mathcal O_{2,1}\cup \mathcal O_{1^3}$.
\end{pro}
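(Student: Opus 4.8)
The plan is to determine the Harish--Chandra module carried by the range of $\Resn$ and then to read off each of the four assertions from known structure theory of $\SL(3,\R)$. First, by Corollary~\ref{cor:resonances are Poisson-singular} the closure of $\Resn(C^\infty_c(\X))$ is the irreducible, infinite-dimensional eigenspace $\mathcal E_{((n+\frac12)\rho)}(\X)$, so the $\K$-finite vectors of the range form an irreducible spherical $(\g,\K)$-module $V$ with infinitesimal character $\gamma(\cdot)\big((n+\tfrac12)\rho\big)$. Any irreducible spherical module with this infinitesimal character occurs as a subquotient of a spherical minimal principal series $\Ind_\Pg^\G(1\otimes e^{\lambda})$ with $\lambda$ in the Weyl orbit of $(n+\tfrac12)\rho$ (Harish--Chandra's subquotient theorem, together with Frobenius reciprocity to force the trivial $\M$-type), and such a series has a single subquotient containing the trivial $\K$-type, namely $\overline\pi(1\otimes(\lambda))(1)$; hence $V\cong\overline\pi\big(1\otimes((n+\tfrac12)\rho)\big)(1)$ infinitesimally, with the parameter well-defined only modulo $\Wg$. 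Because $w_0(n+\tfrac12)\rho=-(n+\tfrac12)\rho$ has negative real part, we may realize this class as a subrepresentation of $\Ind_\Pg^\G\!\big(1\otimes e^{-(n+\frac12)\rho}\big)$; it is proper and infinite-dimensional by Corollary~\ref{cor:resonances are Poisson-singular} (concretely, $\big((n+\tfrac12)\rho\big)_{\alpha_{1,3}}=n+\tfrac12\in\mathbb Z+\tfrac12$ forces reducibility of the principal series, while $\big((n+\tfrac12)\rho\big)_{\alpha_{1,2}}=\tfrac{2n+1}{4}\notin\mathbb N+\tfrac12$ shows via Remark~\ref{rem:hw} that $V$ is not finite-dimensional).

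For unitarizability one invokes the known classification of the spherical unitary dual of $\SL(3,\R)$. The infinitesimal character $(n+\tfrac12)\rho$ is real and regular. For $n=0$ the parameter $\tfrac12\rho$ is the endpoint of the spherical complementary series, and $\overline\pi(1\otimes(\tfrac12\rho))(1)$ is the associated ``small'' unitary representation — the one attached to the minimal orbit $\mathcal O_{2,1}$ — so it is unitarizable (an invariant inner product can also be exhibited on $\mathcal E_{(\frac12\rho)}(\X)$ directly). For $n\geq 1$ the point $(n+\tfrac12)\rho$ lies strictly beyond the range of unitarity, so there is no spherical unitary representation with this infinitesimal character and the representation is not unitarizable.

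For the wave front set, recall that for an irreducible admissible representation $\mathrm{WF}$ is the closure of a single real nilpotent $\G$-orbit in $\g$, of dimension $2\,\mathrm{GKdim}$ (Barbasch--Vogan; Rossmann, \cite{Ross95}). The module $V$ is spherical, infinite-dimensional, and a proper subquotient of the reducible principal series $\Ind_\Pg^\G(1\otimes e^{(n+\frac12)\rho})$; moreover among the composition factors of that series $V$ is not the one of maximal Gelfand--Kirillov dimension, since that factor is the generic, non-spherical Langlands quotient (the image of the long intertwining operator), whereas the spherical vector is annihilated by that operator precisely at these reducibility points, so $V$ sits in its kernel. Hence $\mathrm{GKdim}\,V\in\{1,2\}$; and since $\mathfrak{sl}(3,\R)$ has no nilpotent orbit of dimension $2$ while $\mathcal O_{2,1}$ is its only nonzero non-maximal orbit (of dimension $4$; see \cite{CMcG}), we conclude $\mathrm{GKdim}\,V=2$ and $\WF(V)=\overline{\mathcal O_{2,1}}=\mathcal O_{2,1}\cup\mathcal O_{1^3}$, uniformly in $n$.

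I expect the two genuinely external inputs to be the crux. The first is the unitarity dichotomy, which leans on the full classification of the unitary dual of $\SL(3,\R)$ (and on identifying the $n=0$ representation with the complementary-series endpoint / minimal-orbit representation). The second is the clean exclusion of $\mathrm{GKdim}\,V=3$ in Step~3, i.e.\ verifying that the spherical composition factor of $\Ind_\Pg^\G(1\otimes e^{(n+\frac12)\rho})$ is the ``small'' one rather than the ``large'' Langlands quotient; making the intertwining-operator argument precise — or, equivalently, analysing the $\K$-type structure of $\mathcal E_{(n+\frac12)\rho}(\X)$ — is the step that requires the most care.
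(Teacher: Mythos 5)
Your identification of the range with $\overline\pi(1\otimes(n+\tfrac12)\rho)(1)$ is correct and takes a somewhat different, more abstract route than the paper: you invoke Harish--Chandra's subquotient theorem together with the uniqueness of the subquotient containing the trivial $\K$-type, whereas the paper proceeds more constructively, identifying the range of $f\mapsto f\times\varphi_\lambda$ with the cyclic subspace generated by $1_\K$ inside the hermitian dual $\Ind_\Pg^\G(1\otimes e^{-\lambda})$ (Lemma~\ref{lemma:hermitian dual of principal series}), and then appealing to \cite[Proposition 4.2.12]{Vogan81}. Both arguments are sound. The reducibility, infinite-dimensionality, and unitarity (via Speh) assertions also agree with the paper's proof in substance.

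The wave front set argument, however, has a genuine gap. You argue that $V$ is not the composition factor of maximal Gelfand--Kirillov dimension because ``the spherical vector is annihilated by the long intertwining operator precisely at these reducibility points.'' This is false at $\lambda=(n+\tfrac12)\rho$. The long intertwining operator $A(w_0,\lambda)$ acts on the spherical vector by the scalar $\cHC(\lambda)$, and for $\SL(3,\R)$ one has (up to constants) $\cHC(\lambda)=\prod_{\alpha>0}\Gamma(\lambda_\alpha)/\Gamma(\lambda_\alpha+\tfrac12)$. At $\lambda=(n+\tfrac12)\rho$ the quantities $\lambda_{\alpha_{1,2}}=\lambda_{\alpha_{2,3}}=\tfrac{2n+1}{4}$ and $\lambda_{\alpha_{1,3}}=n+\tfrac12$ are all strictly positive, so $\cHC((n+\tfrac12)\rho)\neq 0$ and the spherical vector is \emph{not} in the kernel of $A(w_0,\lambda)$. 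Consequently the Langlands quotient of $\Ind_\Pg^\G(1\otimes e^{(n+\frac12)\rho})$ is precisely the spherical subquotient $V$, not some other ``generic, non-spherical'' factor, and your reason for discarding $\mathrm{GKdim}\,V=3$ does not go through. The correct conclusion (that $\mathrm{GKdim}\,V=2$ and $\WF(V)=\overline{\mathcal O_{2,1}}$) is what the paper establishes, but by a different device: since $\lambda$ restricted to the center of $\phi_{\alpha_{1,3}}(\GL(2,\R))$ is trivial, induction by stages through the maximal parabolic $\Qg$ exhibits $\Ind_\Pg^\G(1\otimes e^\lambda)$ as $\Ind_\Qg^\G\big(\Ind_{\phi_{\alpha_{1,3}}(\GL(2,\R))\cap\Pg}^{\phi_{\alpha_{1,3}}(\GL(2,\R))}(1\otimes e^\lambda)\otimes e^0\big)$; the inner induced representation has a finite-dimensional Langlands quotient $F_\lambda$, so $V$ is a subquotient of $\Ind_\Qg^\G(F_\lambda\otimes e^0)$, whose wave front set is the closure of the nilpotent orbit induced from $\{0\}$, i.e.\ $\overline{\mathcal O_{2,1}}$. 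You should replace the intertwining-operator heuristic with this induction-by-stages bound.
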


The proof of Proposition \ref{The range of the residue operator} is based on some well know facts. Since their proofs are short we include them in our argument.

Let $(\Pi,\mathcal H_\Pi)$ be an admissible representation of $\G$ realized on a Hilbert space $\mathcal H_\Pi$ with inner product $(\cdot, \cdot)_\Pi$. The hermitian dual
$(\Pi^h,\mathcal H_\Pi^h)$ is defined by $\mathcal H_\Pi^h=\mathcal H_\Pi$ and $\Pi^h(x)=\Pi(x^{-1})^*$, $x\in \G$.
\begin{lem}\label{lemma:hermitian dual of principal series}
Abusing the notation in an obvious way we have
\[
\left(\Ind_\Pg^\G(1\otimes e^\lambda)\right)^h=\Ind_\Pg^\G(1\otimes e^{-\overline\lambda}).
\]
\end{lem}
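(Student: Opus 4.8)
The statement is an identity of Harish-Chandra modules (or, if one prefers, of the underlying $(\g,\K)$-modules): the hermitian dual of the spherical non-unitary principal series $\Ind_\Pg^\G(1\otimes e^\lambda)$ is $\Ind_\Pg^\G(1\otimes e^{-\overline\lambda})$. The plan is to verify this directly from the definitions by exhibiting an explicit $\G$-equivariant pairing. First I would recall that $\Ind_\Pg^\G(1\otimes e^\lambda)$ is realized on the space $\mathcal H_\Pi\subseteq \L^2(\K)$ of right-$\M$-invariant functions, with action $[\Pi(g)v](k)=a(g^{-1}k)^{-\lambda-\rho}v(\kappa(g^{-1}k))$, and similarly $\Ind_\Pg^\G(1\otimes e^{-\overline\lambda})$ is realized on the same space of functions with $\lambda$ replaced by $-\overline\lambda$. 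The hermitian dual $\Pi^h$ acts on $\mathcal H_\Pi$ by $\Pi^h(g)=\Pi(g^{-1})^*$, the adjoint taken with respect to the $\L^2(\K)$-inner product.

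The key computation is to identify $\Pi^h$. Using the $\L^2(\K)$ inner product $(v,w)=\int_\K v(k)\overline{w(k)}\,dk$, one computes, for $g\in\G$,
\[
(\Pi(g^{-1})v,w)=\int_\K a(gk)^{-\lambda-\rho}\,v(\kappa(gk))\,\overline{w(k)}\,dk.
\]
Substituting $k\mapsto \kappa(g^{-1}k)$ and using the standard Jacobian identity for the $\K$-action coming from the Iwasawa decomposition, namely $d(\kappa(gk))=a(g^{-1}k)^{-2\rho}\,dk$, together with the cocycle relations $a(gk)=a(g\kappa(g^{-1}k))\,a(g^{-1}k)^{-1}$ and $\kappa(g\kappa(g^{-1}k))=k$, this rewrites as
\[
(\Pi(g^{-1})v,w)=\int_\K v(k)\,\overline{\,a(g^{-1}k)^{-\overline\lambda-\rho}\,w(\kappa(g^{-1}k))\,}\,dk
=(v,\Pi'(g)w),
\]
where $\Pi'$ denotes the action of $\Ind_\Pg^\G(1\otimes e^{-\overline\lambda})$. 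Hence $\Pi(g^{-1})^*=\Pi'(g)$, which is exactly the claimed identification $\big(\Ind_\Pg^\G(1\otimes e^\lambda)\big)^h=\Ind_\Pg^\G(1\otimes e^{-\overline\lambda})$.

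I expect the main obstacle to be purely bookkeeping: getting the Iwasawa cocycle identities and the change-of-variables Jacobian on $\K$ exactly right, including the sign conventions in the exponents of $a(\cdot)$ and the appearance of $\rho$ versus $2\rho$ (one factor of $2\rho$ comes from the modular character and hence the Jacobian, the balancing being the reason the normalization $-\lambda-\rho$ is chosen in the first place). A cleaner, essentially equivalent route is to invoke the general fact that for a parabolic $\Pg=\M\A\N$ and a unitary character $\delta$ of $\M$ one has $\big(\Ind_\Pg^\G(\delta\otimes e^\lambda)\big)^h\cong \Ind_\Pg^\G(\bar\delta\otimes e^{-\overline\lambda})$ for normalized induction; here $\delta=1$ is trivially unitary and self-conjugate, so the formula reduces to the stated one. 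Either way the proof is a few lines, and I would present the direct computation above since the paper works with the explicit $\L^2(\K)$-model throughout.
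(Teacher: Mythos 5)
Your proof follows the same route as the paper: compute the $\L^2(\K)$-adjoint directly via the change-of-variables identity $\int_\K a(gk)^{-2\rho}f(\kappa(gk))\,dk=\int_\K f(k)\,dk$ together with the Iwasawa cocycle relations $\kappa(g\kappa(g^{-1}k))=k$ and $a(g\kappa(g^{-1}k))\,a(g^{-1}k)=1$. One small slip: after the change of variables the exponent inside the complex conjugate should be $a(g^{-1}k)^{\overline\lambda-\rho}$, not $a(g^{-1}k)^{-\overline\lambda-\rho}$ (the $-2\rho$ from the Jacobian cancels $+\rho$ from $a(gk)^{-\lambda-\rho}=a(g^{-1}l)^{\lambda+\rho}$, leaving $\lambda-\rho=\overline{\overline\lambda-\rho}$). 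With the paper's convention $[\Pi_\mu(g)v](k)=a(g^{-1}k)^{-\mu-\rho}v(\kappa(g^{-1}k))$ this corresponds to $\mu=-\overline\lambda$, so your stated conclusion is nonetheless correct.
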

\begin{prf}
In the argument below we'll find the following ``change of variables'' formula useful
\begin{equation}\label{equation: change of variables}
\int_\K a(gk)^{-2\rho}f(\kappa(gk))\,dk=\int_\K f(k)\,dk\qquad (g\in\G).
\end{equation}
It may be found for instance in \cite[(7.4)]{knappLie2}.  Let $(\cdot,\cdot)$ denote the $\L^2$ inner product on $\K$.
For $u,v\in \mathcal H_\Pi$ and $g\in\G$ we have
\begin{eqnarray}\label{equation: 1.2}
(\Pi^h(g)u,v)&=&(u,\Pi(g^{-1})v)=
\int_\K u(k)\overline{a(gk)^{-\lambda-\rho}v(\kappa(gk))}\,dk\\
&=&\int_\K a(gk)^{-2\rho}\left(u(k)a(gk)^{-\overline{\lambda}+\rho}\overline{v(\kappa(gk))}\right)\,dk.\nn
\end{eqnarray}
Let $l=\kappa(gk)$. Then $gk=\kappa(gk)a(gk)n(gk)$. So,
\begin{eqnarray*}
k&=&g^{-1}la(gk)n(gk)\\
&=&\kappa(g^{-1}l)a(g^{-1}l)n(g^{-1}l)a(gk)n(gk)\\
&=&\kappa(g^{-1}l)a(g^{-1}l)a(gk)n'.
\end{eqnarray*}
Hence
\[
k=\kappa(g^{-1}l)\ \ \ \text{and}\ \ \ a(g^{-1}l)a(gk)=1.
\]
Let $\psi_g(l)=k$. (This is the inverse of the map $k\to \kappa(gk)$.) The above formulas show that
\[
\psi_g(l)=\kappa(g^{-1}l)\ \ \ \text{and}\ \ \ a(g\psi_g(l))=a(g^{-1}l)^{-1}.
\]
Since the function
\[
f(l)=u(\psi_g(l))a(g\psi_g(l))^{-\overline \lambda +\rho}\overline{v(l)}
\]
is right $\M$-invariant, formula \eqref{equation: change of variables} implies that \eqref{equation: 1.2} is equal to
\begin{eqnarray*}
\int_\K u(\kappa(g^{-1}l))a(g^{-1}l)^{\overline\lambda-\rho}\overline{v(l)}\,dl.
\end{eqnarray*}
Thus
\[
[\Pi^h(g)u](l)=a(g^{-1}l)^{\overline\lambda-\rho}u(\kappa(g^{-1}l)).
\]
Since this is the action of the induced representation $\Ind_\Pg^\G(1\otimes e^{-\overline\lambda})$, we are done.
\end{prf}
Next we show that the range of the residue operator $\Resn$ is infinitesimally equivalent to $\overline \pi(1\otimes \lambda)(1)$, where $\lambda=(n+\frac{1}{2})\rho$.

If $1_\K$ denotes the constant function equal to $1$ on $\K$, then the Harish-Chandra spherical function $\varphi_\lambda$ (considered as a $\K$-bi-invariant function on $\G$) is given by
\[
\varphi_\lambda(x)=(\Pi(x)1_\K,1_\K) \qquad (x\in\G),
\]
where $(\Pi, \mathcal H_\Pi)$ stands for the induced representation $\Ind_\Pg^\G(1\otimes \lambda)$. As our $\lambda$ is real, we have
\[
\varphi_\lambda(x)=(1_\K,\Pi(x)1_\K) \qquad (x\in\G).
\]
Since the convolution of two functions is given by
\[
(f*g)(x)=\int_\G f(y)g(y^{-1}x)\,dy \qquad (x\in\G,\ f\in C_c(\G),\ g\in C(\G)),
\]
we see that
\begin{equation}\label{equation: e0}
(f*\varphi_\lambda)(x)=\int_\G f(y)(1_\K,\Pi(y^{-1}x)1_\K)\,dy=(\Pi^h(f)1_\K,\Pi(x)1_\K).
\end{equation}
Here $\Pi^h$ denotes the lift of $(\Pi^h,\mathcal H_\Pi^h)$ to $L^1(\G)$.
The map
\begin{equation}\label{equation: e1}
C^\infty_c(\G)\ni f\mapsto \Pi^h(f)1_\K\in \mathcal H_\Pi^h
\end{equation}
intertwines the left regular representation on $C^\infty_c(\G)$ with $\Pi^h$.
By definition, the range of this map is generated by the action of
$L^1(\G)$ on the vector $1_\K$. Hence it is generated by the action of the
group on the vector $1_\K$.
But the representation $(\Pi^h,\mathcal H_\Pi^h)$ is infinitesimally equivalent to $\Ind_\Pg^\G(1\otimes (-\lambda))$. As $-\lambda$ is negative, the induced representation contains a unique irreducible subrepresentation, $\overline \pi(1\otimes(-\lambda))(1)$, containing the trivial $\K$-type, \cite[Proposition 4.2.12]{Vogan81}. Hence the range of \eqref{equation: e1} coincides with $\overline \pi(1\otimes(-\lambda))(1)$.

Furthermore we have the map
\begin{equation}\label{equation: e2}
\mathcal H_\Pi^h\ni u\mapsto (u,\Pi(\cdot)1_\K)\in C(\G).
\end{equation}
Since the map
\begin{equation}\label{equation: e3}
C^\infty_c(\G)\ni f\mapsto f*\varphi_\lambda\in C^\infty(\G)
\end{equation}
is the composition of \eqref{equation: e2} and \eqref{equation: e1}, we see that the range of \eqref{equation: e3} coincides with $\overline \pi(1\otimes(-\nu))(1)$. Since replacing $C^\infty_c(\G)$ by $C^\infty_c(\G/\K)$ does not change the range, the range of \eqref{equation: e3} is equal to the range of the residue operator. Since $\overline \pi(1\otimes(-\lambda))(1)$ is isomorphic to $\overline \pi(1\otimes\lambda)(1)$, the first part of the proposition follows.

Next we study some properties of the representation $\overline \pi(1\otimes\lambda)(1)$.
The infinitesimal character of $\overline \pi(1\otimes\lambda)(1)$ is equal to the infinitesimal character of the induced representation, and therefore is represented by $\lambda$, see \cite[Lemma 4.1.8]{Vogan81}. In particular this infinitesimal character is not of the form ``a highest weight plus $\rho$''. Therefore $\overline \pi(1\otimes\lambda)(1)$  is infinitely dimensional.

For each positive root $\alpha$ we have an embedding
\[
\phi_\alpha:\GL(2,\R)\to \G
\]
defined by
\begin{eqnarray*}
\phi_{\alpha_{1,2}}\left(
\begin{array}{cc}
g_{1,1} & g_{1,2}\\
g_{2,1} & g_{2.2}
\end{array}
\right)
&=&
\left(
\begin{array}{ccc}
g_{1,1} & g_{1,2} & 0\\
g_{2,1} & g_{2.2} & 0\\
0 & 0 & (g_{1,1}g_{2.2}- g_{1,2}g_{2,1})^{-1}
\end{array}
\right)\\
\phi_{\alpha_{2,3}}\left(
\begin{array}{ccc}
g_{1,1} & g_{1,2}\\
g_{2,1} & g_{2.2}
\end{array}
\right)
&=&
\left(
\begin{array}{ccc}
(g_{1,1}g_{2.2}- g_{1,2}g_{2,1})^{-1} & 0 & 0\\
0 & g_{1,1}  & g_{1,2}\\
0 & g_{2,1}  & g_{2.2}
\end{array}
\right)\\
\phi_{\alpha_{1,3}}\left(
\begin{array}{cc}
g_{1,1} & g_{1,2}\\
g_{2,1} & g_{2.2}
\end{array}
\right)
&=&
\left(
\begin{array}{ccc}
g_{1,1} & 0 & g_{1,2}\\
0 &  (g_{1,1}g_{2.2}- g_{1,2}g_{2,1})^{-1} & 0\\
g_{2,1} & 0 & g_{2.2}
\end{array}
\right)
\end{eqnarray*}
Then $\phi_\alpha(\GL(2,\R))$ is the centralizer of the kernel of $e^\alpha:\A\to \C$ in $\G$, and is denoted by $\M^\alpha A^\alpha$ in \cite[Notation 4.2.21]{Vogan81}.
As before, let $\lambda=(n+\frac{1}{2})\rho$. We see from \cite[Theorem 4.2.25]{Vogan81} that the induced representation $\Ind_\Pg^\G(1\otimes e^\lambda)$ is reducible if and only if there is $\alpha$ such that the induced representation $\Ind_{\Pg\cap\phi_\alpha(\GL(2,\R))}^{\phi_\alpha(\GL(2,\R))}(1\otimes e^\lambda)$ is reducible. Set
\begin{eqnarray*}
&&H_{\alpha_{1,2}}=
\left(
\begin{array}{ccc}
1 & 0 & 0\\
0 & -1 & 0\\
0 & 0 & 0
\end{array}
\right)\,, \qquad
H_{\alpha_{1,3}}=
\left(
\begin{array}{ccc}
1 & 0 & 0\\
0 & 0 & 0\\
0 & 0 & -1
\end{array}
\right)\,,\qquad
H_{\alpha_{2,3}}=
\left(
\begin{array}{ccc}
0 & 0 & 0\\
0 & 1 & 0\\
0 & 0 & -1
\end{array}
\right).
\end{eqnarray*}
If $\alpha\in\{\alpha_{1,2}, \alpha_{1,3}, \alpha_{2,3}\}$ the reducibility condition for  $\Ind_{\Pg\cap\phi_\alpha(\GL(2,\R))}^{\phi_\alpha(\GL(2,\R))}(1\otimes e^\lambda)$ reads that
\[
\lambda(H_\alpha)=2d+1  \qquad (\text{for some $d=1,2,3,...$}).
\]
But $\rho=\alpha_{1,3}$, so
\[
\rho(H_{\alpha_{1,2}})=1\,, \qquad \rho(H_{\alpha_{1,3}})=2\,, \qquad \rho(H_{\alpha_{2,3}})=1.
\]
Therefore $\Ind_\Pg^\G(1\otimes e^\lambda)$ is reducible if and only if
\begin{eqnarray*}
&&n+\tfrac{1}{2}\ \ \ \text{is an odd positive integer}\\
\text{or}\ \ \ &&2(n+\tfrac{1}{2})\ \ \ \text{is an odd positive integer}\\
\text{or}\ \ \ &&n+\tfrac{1}{2}\ \ \ \text{is an odd positive integer}.
\end{eqnarray*}
Thus $\Ind_\Pg^\G(1\otimes e^\lambda)$ is reducible. According to \cite{speh} the unitary dual of $\G$ consists of the trivial representation, complementary series and unitarily induced representations.
Our representation
$\overline \pi(1\otimes\frac{1}{2}\rho)(1)$ is at the end of a complementary series, hence it is unitarizable. However,  for $n=1,2,3,\dots$,  $\overline \pi(1\otimes(n+\frac{1}{2}\rho))(1)$ is not in any complementary series. Hence it is not unitarizable.

Let $\Qg\subseteq\G$ be the  group generated by $\Pg$ and $\phi_{\alpha_{1,3}}(\GL(2,\R))$. Then $\Qg$ is a maximal parabolic subgroup with the Levi factor equal to
$\phi_{\alpha_{1,3}}(\GL(2,\R))$. The restriction of the character $e^\lambda$ to the center of $\phi_{\alpha_{1,3}}(\GL(2,\R))$ is trivial. Therefore the induction by stages, \cite[Proposition 4.1.18]{Vogan81}, shows that
\begin{equation}\label{stages1}
\Ind_\Pg^\G(1\otimes e^\lambda)=\Ind_\Qg^\G(\Ind_{\phi_{\alpha_{1,3}}(\GL(2,\R))\cap\Pg}^{\phi_{\alpha_{1,3}}(\GL(2,\R))}(1\otimes e^\lambda))\otimes e^0).
\end{equation}
The representation $\Ind_{\phi_{\alpha_{1,3}}(\GL(2,\R))\cap\Pg}^{\phi_{\alpha_{1,3}}(\GL(2,\R))}(1\otimes e^\lambda)$ has the unique Langlands quotient $F_\lambda$, which happens to be finite dimensional. Hence our Langlands quotient $\overline \pi(1\otimes\frac{1}{2}\rho)(1)$ is a subquotient of
\begin{equation}\label{stages2}
\Ind_\Qg^\G(F_\lambda\otimes e^0).
\end{equation}
In particular the wave front set of $\overline \pi(1\otimes\frac{1}{2}\rho)(1)$ is contained in the wave front set of $\Ind_\Qg^\G(F_\lambda\otimes e^0)$, which is equal to the closure of the nilpotent orbit induced from the zero orbit on the Lie algebra of $\phi_{\alpha_{1,3}}(\GL(2,\R))$, i.e. to the closure of $\mathcal O_{2,1}$. This completes the proof of Proposition \ref{The range of the residue operator}.

\begin{rem}\label{rem:Poisson2}
By (\ref{equation: 1.2}), the image of the map (\ref{equation: e2}) is the function
$$
\G \ni g\mapsto (u,\Pi(g)1_\K)=\int_\K u(k) a(g^{-1}k)^{-\l-\rho} \, dk\,.
$$
Consider right $\M$-invariant functions on $\K$ as functions on $\B=\K/\M$ and right $\K$-invariant functions on $\G$ as
functions on $\G/\K$. Then $\mathcal H_\Pi^h=\L^2(\B)$ and the range of (\ref{equation: e2}) is $C^\infty(\G/\K)$. Since
$a(g^{-1}k)^{-\l-\rho}=e_{\l,k\M}(g\K)$, the map (\ref{equation: e2}) is the Poisson transform $\mathcal P_\l$,
see Remark \ref{rem:Poisson}.

For generic $\lambda$, the Poisson transform maps the hyperfunction vectors of spherical non-unitary principal series representation 
$\Ind_\Pg^\G(1\otimes e^\lambda)$ onto $\mathcal E_{\lambda}(\X)$. But Corollary~\ref{cor:resonances are Poisson-singular} says the resonances $(n+\frac{1}{2})\rho$ are not generic in this sense. In this case, according to \cite[Thm. 2.4]{OS}, the image of the Poisson transform consists of those elements $u\in \mathcal E_{(n+\frac{1}{2})\rho}(\X)$ which satisfy the following additional differential equations: $\mathrm{sym}(h)u=0$, where $h$ is a $\K$-harmonic polynomial on $\mathfrak p_\C^*$, viewed as an element of the symmetric algebra $S(\mathfrak p_\C)$ of $\mathfrak p_\C$ and $\mathrm{sym}\colon S(\mathfrak p_\C)\to U(\mathfrak p_\C)$ is the usual symmetrization map.
\end{rem}


\begin{thebibliography}{22}


\bibitem{CMcG}
Collingwood, D. and W. McGovern: \textit{Nilpotent orbits in semisimple Lie algebra},  Van Nostrand, New York, 1993.

\bibitem{GV}
Gangolli, R. and V. S. Varadarajan:
\textit{Harmonic analysis of spherical functions on real reductive
              groups}, Springer-Verlag, 1988.


\bibitem{Gu05}
Guillarmou, C.: Resonances and scattering poles on asymptotically hyperbolic manifolds.
\emph{Math. Research Letters} \textbf{12} (2005), 103--119.

\bibitem{GZ95}
Guillop\'e, L. and M. Zworski: Polynomial bounds on the number of resonances for the complete spaces of constant negative curvature near infinity, \emph{Asympt. Anal.} \textbf{11} (1995), 1--22.


\bibitem{He2}
Helgason, S.: \textit{Groups and Geometric Analysis. Integral Geometry, Invariant Differential Operators, and Spherical Functions.} Pure and Applied Mathematics, 113. Academic Press, Inc., Orlando, FL, 1984.


\bibitem{He3}
\bysame : \textit{Geometric Analysis on Symmetric Spaces}. Second edition,
American Mathematical Society, Providence, 2008.


\bibitem{HP09}
Hilgert, J. and A. Pasquale:
Resonances and residue operators for symmetric spaces of rank one,
\textit{J. Math. Pures Appl. (9)} \textbf{91} (2009), no. 5, 495--507.

\bibitem{HS96}
Hislop, P. D. and I. M. Sigal: \emph{Introduction to Spectral Theory}, Springer- Verlag, Berlin, 1996.

\bibitem{JL01}
Jorgenson, J. and S. Lang: \textit{Spherical inversion on $\SL_n(\R)$}, Springer Verlag, 2001.

\bibitem{knappLie2}
A. Knapp:
\textit{Representation Theory of Semisimple groups, an overview based on examples}, Princeton University Press, 1986.

\bibitem{MV04}
Mazzeo, R. and A. Vasy: Analytic continuation of the resolvent of the Laplacian on $\rm SL(3)/SO(3)$.  \emph{Amer. J. Math.} \textbf{126} (2004), no. 4, 821--844.

\bibitem{MV05}
\bysame:
Analytic continuation of the resolvent of the Laplacian
on symmetric spaces of noncompact type. \emph{J. Funct. Anal.} \textbf{228} (2005), no. 2, 311--368.

\bibitem{MV07}
\bysame: Scattering theory on ${\rm SL}(3)/{\rm SO}(3)$: connections with quantum 3-body scattering. \emph{Proc. Lond. Math. Soc. (3)} \textbf{94} (2007), no. 3, 545--593.

\bibitem{Mel95}
Melrose, R. B.: \emph{Geometric Scattering Theory}, Cambridge University Press, Cambridge, 1995.

\bibitem{MW00}
Miatello, R. J. and C. E.  Will: The residues of the resolvent on Damek-Ricci
spaces,\emph{ Proc. Amer. Math. Soc.} \textbf{128} (2000), no. 4, 1221--1229.

\bibitem{OS}
Oshima, T. and N. Shimeno: Boundary value problems on Riemannian symmetric spaces of the noncompact type. In  \emph{Lie Groups: Structure, Actions, and Representations}, A. Huckleberry et al. eds., Progress in Mathematics \textbf{306} (2013), 273--308. 


\bibitem{Pe90}
Perelomov, A. M.: \emph{Integrable systems of classical mechanics and Lie algebras}, Vol. 1, Birkh\"auser Verlag, Basel, 1990.

\bibitem{Ross95}
Rossmann, W.: Picard-Lefschetz Theory and Characters of a Semisimple Lie Group, \emph{ Inventiones Math.} \textbf{121} (1995),  579--611.

\bibitem{Si73}
Simon, B.: Resonances in $n$-body quantum systems with dilatation analytic potentials and the foundations of time-dependent perturbation theory.
\emph{Ann. of Math. (2)} \textbf{97} (1973), 247--274.

\bibitem{speh}
Speh, B.:
The Unitary Dual of $\GL(3,\R)$ and $\GL(4,\R)$,
\textit{Math. Ann.} \textbf{258} (1981),  113--133.

\bibitem{Str05}
Strohmaier, A.:
Analytic continuation of resolvent kernels on noncompact symmetric spaces.
\emph{Math. Z.} \textbf{250} (2005), no. 2, 411--425.

\bibitem{Vogan81}
Vogan, D.:
\textit{Representations of Real Reductive Lie groups}, Birkh\"auser Boston Inc., 1981.

\bibitem{Z99}
Zworski, M.: Resonances in physics and geometry.
\emph{Notices Amer. Math. Soc.} \textbf{46} (1999), no. 3, 319--328.

\bibitem{Z06}
\bysame:\textit{What are the residues of the resolvent of the Laplacian on
non-compact symmetric spaces?}, Seminar held at the IRTG-Summer School 2006, Schloss Reisensburg, 2006. Available at http://math.berkeley.edu/$\sim$zworski/reisensburg.pdf




\end{thebibliography}
\end{document}